\tikzstyle{1function}=[fill=white, draw=black, shape=rectangle, minimum width=0.75cm, minimum height=1 cm]
\tikzstyle{2function}=[fill=white, draw=black, shape=rectangle, minimum width=1cm, minimum height=1 cm]
\tikzstyle{3function}=[fill=white, draw=black, shape=rectangle, minimum width=1.5cm, minimum height=1cm]
\tikzstyle{multi function}=[fill=white, draw=black, shape=rectangle, minimum width=5cm, minimum height=1cm]
\numberwithin{equation}{section}
\newtheorem{theorem}{Theorem}[section]
\newtheorem{lemma}[theorem]{Lemma}
\newtheorem{proposition}[theorem]{Proposition}
\newtheorem{corollary}[theorem]{Corollary}
\theoremstyle{definition}
\newtheorem{definition}[theorem]{Definition}
\newtheorem{question}[theorem]{Question}
\newtheorem{remark}[theorem]{Remark}
\newcommand{\udi}[1]{{\color{blue}{#1}}}
\newcommand{\ot}{\otimes}
\newcommand{\N}{\mathbb{N}}
\newcommand{\one}{\textbf{1}}
\newcommand{\Z}{\mathbb{Z}}
\newcommand{\Ga}{\Gamma}
\newcommand{\Ker}{\text{Ker}}
\newcommand{\End}{\text{End}}
\newcommand{\Hom}{\text{Hom}}
\newcommand{\M}{\text{M}}
\newcommand{\parti}{\vdash}
\newcommand{\la}{\lambda}
\newcommand{\ol}{\overline}
\renewcommand{\S}{\mathbb{S}}
\newcommand{\GL}{\text{GL}}
\newcommand{\R}{\text{R}}
\newcommand{\Ind}{\text{Ind}}
\newcommand{\Q}{\mathbb{Q}}
\newcommand{\Res}{\text{Res}}
\newcommand{\Ainf}{K[X]}
\newcommand{\Delot}{\Delta^{\otimes}}
\newcommand{\Tr}{\text{Tr}}
\newcommand{\Id}{\text{Id}}
\newcommand{\T}{\mathcal{T}}
\newcommand{\Pp}{\text{P}}
\newcommand{\winf}{K[X]_{aug}}
\renewcommand{\Re}{\text{Re}}
\newcommand{\pair}{\text{pair}}
\newcommand{\Aut}{\text{Aut}}
\newcommand{\laa}{\{\lambda\}}
\newcommand{\muu}{\{\mu\}}
\newcommand{\I}{\mathfrak{I}}
\newcommand{\xini}{x_1^{\ot n_1}\ot\cdots\ot x_r^{\ot n_r}}
\title{Universal rings of invariants}
\author{Ehud Meir}
\address{Institute of Mathematics, University of Aberdeen, Fraser Noble Building, Aberdeen AB24 3UE, UK}
  \email{meirehud@gmail.com}
\begin{document}
\maketitle
\begin{abstract}
Let $K$ be an algebraically closed field of characteristic zero. 
Algebraic structures of a specific type (e.g. algebras or coalgebras) on a given vector space $W$ over $K$ can be encoded as points in an affine space $U(W)$. This space is equipped with a $\GL(W)$ action, and two points define isomorphic structures if and only if they lie in the same orbit. This leads to study the ring of invariants $K[U(W)]^{\GL(W)}$. We describe this ring by generators and relations. We then construct combinatorially a commutative ring $\Ainf$ which specializes to all rings of invariants of the form $K[U(W)]^{\GL(W)}$. We show that the commutative ring $\Ainf$ has a richer structure of a Hopf algebra with additional coproduct, grading, and an inner product which makes it into a rational PSH-algebra, generalizing a structure introduced by Zelevinsky. We finish with a detailed study of $\Ainf$ in the case of an algebraic structure consisting of a single endomorphism, and show how the rings of invariants $K[U(W)]^{\GL(W)}$ can be calculated explicitly from $\Ainf$ in this case.  
\end{abstract}

\section{Introduction}
In \cite{Procesi} Procesi studied tuples $(T_1,\ldots, T_r)$ of endomorphisms of a finite dimensional vector space up to simultaneous conjugation by studying the corresponding ring of invariants. He showed that for a field $K$ of characteristic zero the algebra of invariants $C_{d,r}=K[(T_i)_{j,k}]^{\GL_d(K)}$ can be generated by traces of monomials in $T_1,\ldots, T_r$, and that all the polynomial relations among these traces arise from the Cayley-Hamilton Theorem. This gives an infinite presentation for the algebra of invariants $C_{d,r}$. This algebra, however, is known to be finitely generated by a theorem of Nagata. 
Describing explicitly a finite presentation for this algebra when $r>1$ is a difficult task. 
See \cite{Teranishi}, \cite{Nakamoto}, \cite{ADS}, \cite{BD} and \cite{Hoge} for results about such presentations for $C_{3,r}$ and $C_{2,r}$ for various values of $r$. 


The main tool used in the work of Procesi to describe the invariants is Schur-Weyl duality. This tool was used further to study more complicated algebraic structures than a vector space equipped with endomorphisms. In \cite{DKS} and \cite{meir1}  Datt Kodiyalam and Sunder and the author of this paper applied invariant theory to the study of finite dimensional semisimple Hopf algebras. In the second paper invariant theory was also used to prove that a semisimple Hopf algebra admits at most finitely many Hopf orders over a number field. Invariant theory was also applied to other Hopf-algebra-related structures such as Hopf two-cocycles and Nichols algebras in \cite{meir2} and \cite{meir4}. See also \cite{meir5} for the study of linear endomorphisms of the tensor product of vector spaces using invariant theory. 

The first goal of the present paper is to generalize the above results to any type of algebraic structure based on a finite dimensional vector space over an algebraically closed field of characteristic zero. The second goal of this paper is to introduce a uniform approach for studying these rings of invariants, which does not depends on the dimension of the vector space. We will show that this results in an infinitely generated polynomial algebra which specializes to invariant rings arising in all possible dimensions. This universal invariant ring, which we will denote by $\Ainf$, is further equipped with a natural pairing and a self-dual Hopf algebra structure. This kind of structure resembles in many ways the PSH-algebras introduced by Zelevinsky in \cite{Zelevinsky}. 

To give the precise details, let $K$ be an algebraically closed field of characteristic zero. We define an \emph{algebraic structure} over $K$ to be a finite dimensional vector space $W$ equipped with structure tensors $$x_i\in W^{p_i,q_i} := W^{\ot p_i}\ot (W^*)^{\ot q_i}, \text{ for } i=1,\ldots r.$$ So for example an algebra is described by a tensor in $W^{1,2}$, and a unit element can be thought of as a map $K\to W$, or an element in $W^{1,0}$. The tuple $((p_1,q_1),\ldots,(p_r,q_r))= ((p_i,q_i))\in (\N^2)^r$ will be referred to as the \emph{type} of $W$ and will be fixed throughout the paper.

The set of all possible such structures on $W$ gives rise to a vector space (or an affine space, from the algebraic geometry point of view) $U(W)$. If we are interested only in structures which satisfy some set of axioms $\T$ then we will restrict out attention to the subset $Y(W)\subseteq U(W)$ of all points which satisfy these axioms. In most cases $Y(W)$ is a Zariski closed subset of $U(W)$ (see Section \ref{sec:axid}). The group $\GL(W)$ acts on $U(W)$ and on $Y(W)$, and two points in $Y(W)$ define isomorphic structures if and only if they lie in the same orbit. This leads to study the quotient $Y(W)/\GL(W)$ and its algebraic counterpart $K[Y(W)]^{\GL(W)}$. From here on we will write $Y_d = Y(K^d)$ and $U_d = U(K^d)$ (See Subsection \ref{subsec:algstr}).

Let now $(n_1,\ldots,n_r)\in \N^r$ be a tuple of integers such that $\sum_ip_in_i = \sum_i q_in_i=n$, and let $\sigma\in S_n$ be any permutation.
Define 
$$p(n,\sigma,n_1,\ldots,n_r)((x_i)):= \Tr_{W^{\ot n}}(L^{(n)}_{\sigma}(\xini))$$
where $L^{(n)}_{\sigma}$ is the linear map given by $$w_1\ot\ldots\ot w_n\mapsto w_{\sigma^{-1}(1)}\ot\ldots\ot w_{\sigma^{-1}(n)}.$$
The expression $p(n,\sigma,n_1,\ldots,n_r)$ can be considered as a polynomial in $K[Y_d]^{\GL_d(K)}$. We will refer to such invariants as \emph{basic invariants}.
In Subsection \ref{subsec:invariantalg} we will use Schur-Weyl duality to prove that $K[Y_d]^{\GL_d(K)}$ has the following (infinite) presentation: 
\begin{theorem}\label{thm:mainstructure}
The algebra $K[Y_d]^{\GL_d(K)}$ is generated by the elements $p(n,\sigma,n_1,\ldots,n_r)= \Tr(L^{(n)}_{\sigma}\xini)$. The relations between these elements are the following:
\begin{enumerate}
\item[R0](product) The product of two basic invariants is again a basic invariant (see Proposition \ref{prop:ainfform})
\item[R1](cyclicity of trace) For every $(\mu_i)\in S_{n_1}\times\cdots\times S_{n_r}$ we have 
$$p(n,\sigma,n_1,\ldots,n_r) = p(n,\alpha^{(p_i)}_{(n_i)}(\mu_i)\sigma\alpha^{(q_i)}_{(n_i)}(\mu_i),n_1,\ldots,n_r)$$
(the homomorphisms $\alpha^{(p_i)}_{(n_i)}$ and $\alpha^{(q_i)}_{(n_i)}$ are defined in Subsection \ref{subsec:permutations}).

\item[R2](dimension relations)If $n>d$ and  $\tau_1,\tau_2\in S_n$ then 
$$\sum_{\sigma\in S_{d+1}}(-1)^{\sigma}p(n,\tau_1\sigma\tau_2,n_1,\ldots,n_r)=0.$$
\item[R3] Relations arising from the axioms. If we write $I_{\T,d} = \Ker(K[U_d]\to K[Y_d])$ then these are the polynomials in $I_{\T,d}^{\GL_d(K)}$. 
The letter $\T$ represents here the set of axioms the structures in $Y_d$ satisfy.
\end{enumerate}
\end{theorem}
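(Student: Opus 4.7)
The plan is two-step: reduce from $Y_d$ to the free affine space $U_d$, then present $K[U_d]^{\GL_d(K)}$ via Schur-Weyl duality. Since $\GL_d(K)$ is linearly reductive in characteristic zero, the invariants functor is exact and
\[
K[Y_d]^{\GL_d(K)} = K[U_d]^{\GL_d(K)}/I_{\T,d}^{\GL_d(K)},
\]
so once R0, R1, R2 present the upstairs ring, the additional relations R3 yield the downstairs presentation.

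For $K[U_d]^{\GL_d(K)}$ I would work one multidegree at a time. The degree-$(n_1,\ldots,n_r)$ part is $\bigotimes_i S^{n_i}((W^{p_i,q_i})^*)$, which in characteristic zero equals the $\prod_i S_{n_i}$-invariants in $\bigotimes_i((W^{p_i,q_i})^*)^{\otimes n_i}$. Reordering tensor factors to collect all $W^*$'s on one side, this becomes $(W^*)^{\otimes P}\otimes W^{\otimes Q}$ with $P=\sum p_in_i$, $Q=\sum q_in_i$, and the $\prod_i S_{n_i}$-action threads through the block embeddings $\prod_i\alpha^{(p_i)}_{(n_i)}$ and $\prod_i\alpha^{(q_i)}_{(n_i)}$ on the respective sides. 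The First Fundamental Theorem for $\GL_d(K)$ then says $[(W^*)^{\otimes P}\otimes W^{\otimes Q}]^{\GL_d(K)}$ vanishes unless $P=Q=n$ and is otherwise spanned by the images of $\sigma\in S_n$ in $\End(W^{\otimes n})\cong W^{\otimes n}\otimes(W^*)^{\otimes n}$. Evaluating such a generator on the universal point $(x_i)$ produces exactly the basic invariant $p(n,\sigma,n_1,\ldots,n_r)$, which proves spanning. Passing to the further $\prod_i S_{n_i}$-invariants amounts, on the $\sigma$-side, to replacing each $\sigma$ by its orbit sum under conjugation by the block embeddings, which is precisely R1.

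For the remaining linear relations between the $L^{(n)}_\sigma$ I would invoke the Second Fundamental Theorem: when $n>d$, the kernel of $K[S_n]\to\End(W^{\otimes n})$ is the two-sided ideal generated by the total antisymmetrizer on $d+1$ letters, and every element of that ideal is a $K$-linear combination of expressions $\tau_1\bigl(\sum_{\sigma\in S_{d+1}}(-1)^\sigma\sigma\bigr)\tau_2$, which is exactly R2. Relation R0 is a separate structural fact (Proposition \ref{prop:ainfform}) stating that the product of two basic invariants is again a basic invariant, so that the linear span is already closed under multiplication and R1, R2 constitute a full algebra presentation.

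The main obstacle I expect is the combinatorial bookkeeping in R1: one must verify that the natural $\prod_i S_{n_i}$-action on $\bigotimes_i((W^{p_i,q_i})^*)^{\otimes n_i}$ transports, after the reshuffle that collects the $W^*$-factors, to exactly the conjugation $\sigma\mapsto\alpha^{(p_i)}_{(n_i)}(\mu)\sigma\alpha^{(q_i)}_{(n_i)}(\mu)$ of Subsection \ref{subsec:permutations}, with the correct handedness and no stray inverses. The conceptual ingredients—FFT, SFT, and exactness of $\GL_d(K)$-invariants—are classical, so the real work is this careful identification together with the derivation of R0 from the explicit description of products of traces.
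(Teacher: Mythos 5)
Your proposal is correct and follows essentially the same route as the paper: reduce from $Y_d$ to $U_d$ by exactness of $\GL_d(K)$-invariants, decompose $K[U_d]$ by multidegree, apply Schur--Weyl duality (FFT/SFT) to get spanning by $L^{(n)}_\sigma$ and the $R2$ relations, and derive $R1$ from the residual $\prod_i S_{n_i}$-action through the block embeddings $\alpha^{(p_i)}_{(n_i)}$, $\alpha^{(q_i)}_{(n_i)}$. The only cosmetic difference is that you phrase the degree-$(n_1,\ldots,n_r)$ piece as $\prod_i S_{n_i}$-\emph{invariants} in $\bigotimes_i ((W^{p_i,q_i})^*)^{\otimes n_i}$, whereas the paper uses coinvariants and then identifies the two via Lemma~\ref{lem:coinvariants}; and you correctly flag that the one step requiring real care is verifying the exact form of the conjugation action in $R1$, which the paper carries out in Lemma~\ref{lem:actionformula}.
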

Relations of type $R0$ are rather simple. We will show that every basic invariant corresponds to a certain diagram, and that the product is given by taking disjoint union of diagrams.
We will consider relations of type $R3$ in Section \ref{sec:axid}. For most of the paper, and for the rest of the introduction, we will concentrate on the invariant ring $K[U_d]^{\GL_d(K)}$. 
By the result of Nagata we know that $K[U_d]^{\GL_d(K)}$ is a finitely generated algebra. However, just like in the case of a vector space with endomorphisms, finding a finite presentation for this algebra is rather difficult.
By geometric invariant theory (see Subsection \ref{subsec:GIT}) we know that $K[U_d]^{\GL_d(K)}$ is the coordinate ring of an affine variety $X_d$, whose points correspond to closed orbits in $Y_d$. We write $$X = \bigsqcup_{d\geq 0} X_d.$$  

The presentation we have for $K[U_d]^{\GL_d(K)}$ is rather uniform in $d$: Indeed, the set of generators does not depend on $d$ and all the relations except those of type $R2$ do not depend on $d$. 
Our approach in this paper will be to study these algebras by studying their ``limit'' $$\lim_{d\to \infty} K[U_d]^{\GL_d(K)}.$$ This will be a commutative algebra which we will denote by $\Ainf$. We will give two equivalent definitions for $\Ainf$. The first one is based on taking out relations of type $R2$ from the presentation we have for $K[U_d]^{\GL_d(K)}$:
\begin{definition}[The algebra $\Ainf$- first definition]\label{def:firstdefkx} 
The algebra $\Ainf$ is the commutative algebra generated by the symbols $p(n,\sigma,n_1,\ldots,n_r)$ modulo the relations $R0$ and $R1$ of Theorem \ref{thm:mainstructure}.
\end{definition}
Notice that every element in $\Ainf$ can be evaluated on every element in $X$. This is the reason for the notation $\Ainf$. 
The second definition for $\Ainf$ will be based on diagrams. 
A \emph{string diagram} is a diagram which represents a linear map between some tensor powers of the algebraic structure $W$. It is made of boxes labeled by $x_1,\ldots, x_r$ and $\Id_W$, and input and output strings. Some of the input strings may be connected to the output strings. We consider diagrams to be equivalent if they represent the same linear map for every algebraic structure (see Definition \ref{def:eqDi}). A diagram with free $q$ input strings and free $p$ output strings represents a linear map from $W^{\ot q}$ to $W^{\ot p}$ and is said to have degree $(p,q)$. The following figure is an example of a diagram of degree $(2,1)$, representing the linear map $ev(L^{(2)}_{(12)}(x_2\ot x_1))$ (see Subsections \ref{subsec:natid} and \ref{subsec:SW} for the relevant terminology)
\begin{center}
\begin{tikzpicture}
	\begin{pgfonlayer}{nodelayer}
		\node [style=1function] (0) at (0, 4.75) {$x_2$};
		\node [style=1function] (1) at (1.25, 4.75) {$x_1$};
		\node [style=none] (2) at (0, 4.25) {};
		\node [style=none] (3) at (0, 5.25) {};
		\node [style=none] (4) at (1.5, 4.25) {};
		\node [style=none] (5) at (1, 4.25) {};
		\node [style=none] (6) at (1.25, 5.25) {};
		\node [style=none] (7) at (0, 5.75) {};
		\node [style=none] (8) at (1.25, 5.75) {};
		\node [style=none] (9) at (0, 3.75) {};
		\node [style=none] (10) at (1.5, 3.75) {};
		\node [style=none] (11) at (1, 3.75) {};
		\node [style=none] (12) at (2.75, 5.75) {};
		\node [style=none] (13) at (2.75, 3.75) {};
	\end{pgfonlayer}
	\begin{pgfonlayer}{edgelayer}
		\draw (2.center) to (9.center);
		\draw (7.center) to (3.center);
		\draw (8.center) to (6.center);
		\draw (5.center) to (11.center);
		\draw (4.center) to (10.center);
		\draw (13.center) to (12.center);
		\draw [bend left=90, looseness=0.75] (7.center) to (12.center);
		\draw [bend left=90, looseness=1.25] (13.center) to (10.center);
	\end{pgfonlayer}
\end{tikzpicture}
\end{center}
A closed diagram is a diagram with no free input or output strings. In other words- it is a diagram of degree $(0,0)$. 
If $Di_1$ and $Di_2$ are two diagrams then we denote by $Di_1\star Di_2$ the diagram resulting from placing $Di_1$ to the left of $Di_2$ (see Section \ref{sec:diagrams}). 
This defines an associative multiplication on the set of all diagrams which is also commutative on the set of closed diagrams.
This leads us the second definition of $\Ainf$: 
\begin{definition}\label{def:seconddefkx}
We define $\Ainf$ to be the free vector space on the set of all equivalence classes of closed diagrams in which no boxes are labeled by $\Id_W$. 
We define $\winf$ to be the free vector space on the set of all equivalence classes of closed diagrams. The $\star$-product of diagrams equips both these vector spaces with an algebra structure.
\end{definition}
We will show in Section \ref{sec:ainfwinf} that the two definitions of $\Ainf$ are indeed equivalent.
The reason for the notation for $\winf$ is that we refer to diagrams which contain $\Id_W$ as augmented, see Definition \ref{def:augmented}. 
Closed diagrams can be thought of as representing linear maps from $W^{\ot 0} = K$ to itself, or in other words, as scalars. 
Since product corresponds to taking disjoint union and every diagram splits uniquely to the disjoint union of its connected components, we have the following result about the structure of $\Ainf$ (see Corollary \ref{cor:ainfwinf}):
\begin{proposition}
The algebra $\Ainf$ is a polynomial algebra on the set of all connected diagrams which do not contain $\Id_W$. 
\end{proposition}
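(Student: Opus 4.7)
The plan is to work from the diagrammatic definition (Definition~\ref{def:seconddefkx}), under which $\Ainf$ is the free $K$-vector space on the set $\mathcal{D}$ of equivalence classes of closed diagrams with no $\Id_W$-boxes, equipped with the $\star$-product. Writing $\mathcal{D}_c\subseteq\mathcal{D}$ for the subset of classes represented by connected diagrams, the goal is to extend the inclusion $\mathcal{D}_c\hookrightarrow\Ainf$ to an isomorphism of commutative algebras $K[\mathcal{D}_c]\xrightarrow{\sim}\Ainf$.

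For surjectivity I would observe that any closed diagram $Di$ splits topologically as a disjoint union $Di=Di_1\sqcup\cdots\sqcup Di_k$ of its connected components, and each $Di_i$ is itself a closed connected diagram without $\Id_W$-boxes. Since the $\star$-product of representatives is precisely disjoint union, this gives $[Di]=[Di_1]\star\cdots\star[Di_k]$ in $\Ainf$, so the classes in $\mathcal{D}_c$ generate $\Ainf$ as a commutative algebra.

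The main step is injectivity, i.e.\ showing that if connected closed diagrams $Di_1,\ldots,Di_k$ and $Di'_1,\ldots,Di'_\ell$ satisfy $Di_1\sqcup\cdots\sqcup Di_k\sim Di'_1\sqcup\cdots\sqcup Di'_\ell$, then $k=\ell$ and, after reordering, $Di_i\sim Di'_i$. Equivalently, the multiset of equivalence classes of connected components is a well-defined and injective invariant on $\mathcal{D}$. Since equivalence is defined semantically (same multilinear functional for every algebraic structure) and the functional represented by a disjoint union is the product of the component functionals, this reduces to showing that the functions attached to pairwise inequivalent elements of $\mathcal{D}_c$ are multiplicatively independent as regular functions on the universal parameter space. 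I would establish this by evaluating on block-diagonal structures on $K^{d_1+d_2}$ built from smaller structures on $K^{d_1}$ and $K^{d_2}$: such an evaluation factors the scalar exactly as a product over the connected components, and an induction on the total number of boxes then forces the component multisets to coincide.

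The main obstacle is exactly this last step. Equivalence being defined semantically, it does not manifestly respect the topological notion of connected component, and so extracting a well-defined component multiset from an equivalence class requires genuine input from the structure of traces and the $\GL(W)$-action rather than a purely formal diagrammatic manipulation. Once this is in place, the proposition follows immediately: $\mathcal{D}$ is identified with the free commutative monoid on $\mathcal{D}_c$, which is the monomial basis of the polynomial algebra $K[\mathcal{D}_c]$.
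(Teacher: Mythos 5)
Your proposal takes a genuinely different route from the paper and, as you yourself flag, leaves the key step as a sketch.

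The paper's proof is essentially one line, because it leans on the combinatorial half of Definition~\ref{def:eqDi}: the equivalence relation on diagrams is \emph{generated} by the permutation move and the identity-reduction move. For diagrams with no $\Id_W$-boxes (the diagrams that span $\Ainf$), only the permutation move can occur, and that move visibly preserves the partition of a diagram into connected components. Hence the multiset of connected components is a well-defined invariant of an equivalence class, disjoint union gives a unique factorization, and $\Ainf$ is free on the connected classes. (The paper proves this for $\winf$ and deals with identity-reduction, then restricts to $\Ainf$; but for $\Ainf$ alone the argument degenerates to a single observation about the permutation move.) In other words, the nontrivial content is packaged into the completeness claim in Definition~\ref{def:eqDi}, which in turn rests on Schur--Weyl duality via Proposition~\ref{prop:invpolys} (R1 is the only relation that survives as $d\to\infty$).

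You instead work from the purely semantic reading of equivalence and try to show directly that the evaluation functions attached to inequivalent connected diagrams are multiplicatively independent, by evaluating on block-diagonal (direct-sum) structures. This is a sound idea --- it is the same mechanism the paper later exploits in Section~\ref{sec:bialg} to show connected diagrams are primitive under $\Delta$, and once you know $\Ainf$ is a connected graded commutative Hopf algebra generated by primitives, the polynomial property follows. But you stop short of carrying it out: you note that ``extracting a well-defined component multiset from an equivalence class requires genuine input from the structure of traces and the $\GL(W)$-action'' and do not supply that input. Concretely, the missing piece is exactly the linear independence (in $\Ainf$) of the distinct products of connected classes, equivalently the injectivity of the natural map from the free commutative monoid on $\mathcal{D}_c$ into $\mathcal{D}$. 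The block-diagonal evaluation shows that the value of a diagram on $(W_1,(x_i))\oplus(W_2,(y_i))$ is controlled by its components, but turning that into the required injectivity takes either a careful induction (which you only gesture at) or the Hopf-algebra argument. The paper avoids all of this by using the generated-by characterization of equivalence, which you never invoke; I'd recommend either invoking it (and then the proof is immediate) or fully developing the Hopf-theoretic route rather than leaving the core step as a remark about an ``obstacle.''
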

Since the equivalence relation for diagrams becomes more complicated once $\Id_W$ is involved, we should be more careful about a similar statement for $\winf$. We will show that in fact $\winf\cong \Ainf\ot K[D]$ where $D$ is an extra variable corresponding to the dimension of $W$ (see Corollary \ref{cor:ainfwinf}). 

Beyond giving a uniform description for the algebras $K[Y_d]$ and $K[U_d]$ the algebra $\Ainf$ has the advantage of having a much richer structure than just a commutative algebra. We will show in Section \ref{sec:bialg} that the operations of forming direct sums and tensor products of structures induce two coproducts $\Delta,\Delot:\Ainf\to \Ainf\ot\Ainf$ on $\Ainf$. We will prove the following:
\begin{theorem}
The coproduct $\Delta$ equips $\Ainf$ with a Hopf algebra structure. The connected diagrams are primitive with respect to this coproduct. That is- they satisfy the equations $\Delta(p) = p\ot 1 + 1\ot p$ and $\epsilon(p)=0$. 
The coproduct $\Delot$ equips $\Ainf$ with a bialgebra structure. All the diagrams are group like elements with respect to this coproduct. That is- they satisfy the equations $\Delot(p) = p\ot p$ and $\epsilon(p) = 1$. 
\end{theorem}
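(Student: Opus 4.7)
The plan is to work entirely with the diagrammatic description of $\Ainf$ (Definition \ref{def:seconddefkx}), define both coproducts combinatorially, and then verify that they implement the operations of direct sum and tensor product of algebraic structures. Given structures on $W_1$ and $W_2$, the direct sum carries structure tensors $x_i=x_i^{(1)}+x_i^{(2)}\in W^{p_i,q_i}$ via the inclusions $W_j\hookrightarrow W_1\oplus W_2$, and expanding a basic invariant $\Tr(L^{(n)}_\sigma(x_1^{\otimes n_1}\ot\cdots))$ by multilinearity gives a sum over labelings of the boxes of the corresponding diagram by $\{1,2\}$. Since $x_i^{(j)}$ only involves the summand $W_j$, a term vanishes unless every string of the diagram joins two equally labeled boxes, which forces each connected component of the diagram to be uniformly labeled. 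This produces the combinatorial formula
\[\Delta(Di)=\sum_{S\subseteq \pi_0(Di)}\Big(\prod_{c\in S}c\Big)\otimes\Big(\prod_{c\notin S}c\Big),\]
where $\pi_0(Di)$ denotes the set of connected components. An analogous computation on $W_1\otimes W_2$ with $x_i=x_i^{(1)}\otimes x_i^{(2)}$, using the factorization of the trace and of $L^{(n)}_\sigma$ through the canonical isomorphism $(W_1\otimes W_2)^{\otimes n}\cong W_1^{\otimes n}\otimes W_2^{\otimes n}$, yields $\Delot(Di)=Di\otimes Di$.

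Both $\Delta$ and $\Delot$ descend to equivalence classes because the multiset $\pi_0(Di)$ is an invariant of the class, as follows from the preceding proposition which identifies $\Ainf$ with the polynomial algebra on connected diagrams. They are algebra maps with respect to the $\star$-product: one has $\pi_0(Di_1\star Di_2)=\pi_0(Di_1)\sqcup \pi_0(Di_2)$, so the subset sums distribute for $\Delta$, while $\Delot$ is the diagonal map. Coassociativity of $\Delta$ reduces to the tautological bijection between ordered three-way partitions of a finite set; coassociativity of $\Delot$ is immediate. The counits $\epsilon_\Delta$ (sending every nonempty connected diagram to $0$) and $\epsilon_{\Delot}$ (sending every closed diagram to $1$) are well-defined algebra maps satisfying the counit axioms.

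The primitive and group-like statements are now immediate: for a connected $c$ one has $\pi_0(c)=\{c\}$, so $\Delta(c)=c\otimes 1+1\otimes c$ and $\Delot(c)=c\otimes c$, and the values of the counits have been arranged accordingly. Grading $\Ainf$ by the total number of boxes in a diagram makes it a connected $\N$-graded commutative bialgebra under $\Delta$, and every such bialgebra is automatically a Hopf algebra; concretely, the antipode is defined by $S(c)=-c$ on the primitive generators $c$ and extended as an algebra antihomomorphism.

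The main technical point is the vanishing of cross-terms in the direct sum trace computation: one has to verify that a mixed labeling (with some boxes in $W_1$ and some in $W_2$ within the same connected component) contributes zero. This follows because $x_i^{(j)}$ projects to zero on any summand of $(W_1\oplus W_2)^{\otimes p_i}\otimes((W_1\oplus W_2)^*)^{\otimes q_i}$ containing a factor of $W_{3-j}$ or $W_{3-j}^*$, and because $L^{(n)}_\sigma$ preserves the direct sum decomposition of $W^{\otimes n}$ indexed by $\{1,2\}^n$; the surviving terms are then precisely those in which each string of the diagram carries a single label. Once this point is established, the bialgebra and Hopf axioms reduce to standard manipulations in the polynomial algebra on primitive generators.
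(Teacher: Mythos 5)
Your proof is correct and follows essentially the same route as the paper. The paper transfers the problem to functions on $X$ via the injective map $\Xi$ and defines the coproducts as the unique preimages of $\oplus^*$ and $\ot^*$, deducing coassociativity from associativity of the geometric operations; you instead write down the combinatorial coproduct formulas on diagrams directly and verify the bialgebra axioms from them. But the heart of both arguments is the same: the cross-term vanishing computation showing that in $\Tr(L^{(n)}_\sigma((x_i\oplus y_i)^{\ot n_i})(P_{j_1}\ot\cdots\ot P_{j_n}))$ the labels $j_k$ must be constant on each connected component (forcing primitivity of connected diagrams), the factorization of the trace over $W_1\ot W_2$ (forcing group-likeness under $\Delot$), and the existence of an antipode because $\Delta$-generators are primitive in a connected graded commutative bialgebra. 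One small point to be careful of: your formula $\Delta(Di)=\sum_{S\subseteq\pi_0(Di)}(\prod_{c\in S}c)\ot(\prod_{c\notin S}c)$ needs $\pi_0(Di)$ read as a labeled set (not a set of isomorphism classes) of components so that the binomial multiplicities $\binom{a}{k}$ come out right when a component is repeated — you implicitly acknowledge this later by calling $\pi_0(Di)$ a multiset, and the paper's PSH computation in Section 7 uses exactly these binomial coefficients.
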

The algebra $\Ainf$ is graded by $\N^r$, where $(\Ainf)_{n_1,\ldots,n_r}$ is spanned by the diagrams corresponding to basic invariants of the form $p(n,\sigma,n_1,\ldots,n_r)$ where $n=\sum_i n_ip_i=\sum_i n_iq_i$ and $\sigma\in S_n$ (if $\sum_i n_ip_i\neq \sum_i n_iq_i$ then $(\Ainf)_{n_1,\ldots,n_r} =0$). The multiplication and the coproduct $\Delta$ respect this grading (the other comultiplication $\Delot$ does not). Thus, $\Ainf$ has a structure of a graded Hopf algebra. We will show that if $\sum_i n_ip_i = \sum_i n_iq_i = n$ then $(\Ainf)_{n_1,\ldots,n_r}$ is equipped with an inner product $\langle -,-\rangle$ arising from the natural inner product on the group algebra $KS_n$. We will then show that with respect to this inner product, the multiplication is adjoint to the comultiplication. In other words, if we use the Sweedler notation $\Delta(z) = z_1\ot z_2$ then $$\langle z,xy\rangle = \langle z_1,x\rangle\langle z_2,y\rangle.$$
Zelevinsky studied a similar family of Hopf algebras which he called positive self adjoint Hopf algebras (or PSH-algebras).
PSH-algebras are defined over $\Z$ and have a very rigid structure- there is only one ``simple'' (or \emph{universal}) PSH-algebra, which encodes the representation theory of all the symmetric groups, and every other PSH algebra decomposes as a tensor product of copies of this universal PSH-algebra. Our Hopf algebras, however, are defined over a field of characteristic zero and not over $\Z$. In Section \ref{sec:pshalg} we will define \emph{rational} PSH-algebras as a certain generalization of PSH-algebras. We will then show the following:
\begin{theorem}\label{thm:Ainfpsh}
The Hopf algebra $\Ainf$ is a rational PSH-algebra. 
\end{theorem}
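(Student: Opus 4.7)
The plan is to verify axiom-by-axiom the definition of a rational PSH-algebra given in Section \ref{sec:pshalg} for $\Ainf$. The ingredients already constructed in the preceding sections supply most of what is needed: $\Ainf$ is a graded commutative Hopf algebra with respect to the coproduct $\Delta$; each graded component $(\Ainf)_{n_1,\ldots,n_r}$ is finite dimensional and carries a positive-definite inner product induced from the standard inner product on $KS_n$ (in which permutations form an orthonormal basis); and multiplication and comultiplication are mutually adjoint with respect to this pairing. What remains is to check cocommutativity, connectedness in multi-degree zero, and the rational analogue of Zelevinsky's positivity condition on a distinguished basis.

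Cocommutativity of $\Delta$ should follow directly from its construction. Since $\Delta$ is induced by the operation of direct sum on algebraic structures, and direct sum is symmetric up to the swap isomorphism $W_1\oplus W_2\cong W_2\oplus W_1$, the coproduct must be invariant under the flip $\tau:\Ainf\ot\Ainf\to\Ainf\ot\Ainf$. Concretely, $\Delta(p(n,\sigma,n_1,\ldots,n_r))$ decomposes as a sum over ways of partitioning each $n_i$ into two parts, and the summation is manifestly symmetric in the two parts. Connectedness in multi-degree $(0,\ldots,0)$ is immediate, since the only closed diagram with no boxes is the empty diagram, spanning $K\cdot 1$.

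The main obstacle is the positivity axiom. The natural candidate for the distinguished basis is the set of connected closed diagrams without $\Id_W$ labels, which generate $\Ainf$ freely as a polynomial ring; in this basis, multiplication is disjoint union and so has structure constants in $\{0,1\}$. For the coproduct, I would argue that $\Delta$ applied to a connected diagram expands as a nonnegative rational combination of tensor products of connected diagrams, indexed by ways of distributing the boxes between the two sides, with the coefficients arising as ratios of orders of symmetry groups of diagrams. To match whatever form of orthonormality is built into the definition of a rational PSH-algebra, one likely needs to rescale connected diagrams by square roots of their automorphism group orders, and it is this rescaling that forces the \emph{rational} rather than \emph{integral} setting. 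The technical heart of the proof will be to verify that after rescaling, the resulting basis is both self-dual under $\langle -,-\rangle$ and positive in the sense required by the axioms of Section \ref{sec:pshalg}, so that the diagrammatic combinatorics of $\Ainf$ genuinely fits into the generalized PSH framework introduced there.
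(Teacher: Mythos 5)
Your high-level plan is reasonable, but it contains a significant misconception about the ``rational'' relaxation that the paper introduces, and it leaves the single genuinely nontrivial axiom unproved.

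The paper's definition of a rational PSH-algebra deliberately weakens Zelevinsky's orthonormality axiom to \emph{orthogonality with positive rational norms}: the distinguished basis $B$ need only satisfy $\langle x,y\rangle = \delta_{x,y}c(x)$ with $c(x)\in\Q_+$. Consequently no rescaling is needed --- the basis used is simply the set of basic invariants (equivalently, the monomials in the irreducible basic invariants), and the inner product gives $\langle p,p\rangle = |\Aut(p)|$ directly, a positive integer. Your proposal to rescale by $\sqrt{|\Aut(p)|}$ in order ``to match whatever form of orthonormality is built into the definition'' is not only unnecessary, it would actively break the framework: automorphism-group orders are generally not perfect squares, so the rescaling introduces irrationalities, and the structure constants of the rescaled basis would then fail the rationality/positivity axiom the paper imposes. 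You have inverted the logic: the rational setting exists precisely \emph{so that} one does \emph{not} have to normalize, and can keep an orthogonal basis of combinatorially meaningful objects with integer structure constants.

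The other gap is that you never actually prove the one axiom the paper identifies as the only nontrivial check, namely the adjointness $\langle a\ot b,\Delta(c)\rangle=\langle ab,c\rangle$. The paper's proof reduces to monomials $a=\prod p_i^{a_i}$, $b=\prod p_i^{b_i}$, $c=\prod p_i^{c_i}$ in the irreducible (hence primitive) invariants, uses primitivity to expand $\Delta(c)$, and is then forced to establish the combinatorial identity $|\Aut(a)| = \prod_i a_i!\,|\Aut(p_i)|^{a_i}$ by examining how an automorphism of a disconnected diagram permutes connected components of each isomorphism type. Your proposal gestures at ``ratios of orders of symmetry groups'' but does not isolate this identity or prove it. Also, note the definition has no cocommutativity axiom, so that check is not needed; and the set of connected diagrams is a \emph{generating} set for a polynomial ring, whereas the graded basis $B$ of the PSH axioms must be a linear basis --- the monomials, not the generators.
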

In Section \ref{sec:example} we will study in detail the algebra $\Ainf$ for the structure of a vector space with a single endomorphism. We will show that in this case the algebra $\Ainf$ is the extension of scalars from $\Z$ to $K$ of the simple PSH-algebra. This enables us to give a very clean description of the ideals $I_d:= \Ker(\Ainf\to K[U_d]^{\GL_d(K)})$. Indeed- we will show that $\Ainf\cong K[y_1,y_2,\ldots]$ and that $I_d= (y_{d+1},y_{d+2},\ldots,)$. This will recover the well known result that the algebra of invariants $K[\M_d(K)]^{\GL_d(K)}$ is a polynomial ring in $d$ variables. 
Zelevinsky introduced PSH-algebra in order to study representations of families of finite groups, such as $S_n$ or $\GL_n(\mathbb{F}_q)$. This brings us to the following question:
\begin{question}
Does the rational PSH-algebra $\Ainf$ have a natural basis which defines a PSH-algebra over $\Z$? In other words, can we always find inside $\Ainf$ a PSH-algebra $R$ such that $\Ainf\cong R\ot_{\Z} K$? Moreover, is there a representation theoretic interpretation for this PSH-algebra?
\end{question}

In Section \ref{sec:hilbert} we will give a formula for the Hilbert function of the multi-graded algebra $\Ainf$ and its finitely generated quotients $\Ainf/I_d\cong K[U_d]^{\GL_d(K)}$ in terms of the Littlewood-Richardson coefficients and the Kronecker coefficients. A similar concrete calculation was done for the invariant ring of an endomorphism of the tensor product of two vector spaces of dimension 2, see \cite{meir5}. While the formula we get in Section \ref{sec:hilbert} seems to be quite complicated it establishes a connection between our rings of invariants and central themes in the representation theory of the symmetric groups, such as the Kronecker and the Littlewood-Richardson coefficients. 

\section{Preliminaries}\label{sec:prelim}
\subsection{Geometric invariant theory}\label{subsec:GIT}
Recall that a linear algebraic group $\Ga$ is called \emph{reductive} if the category of rational representations of $\Ga$ is semisimple. In other words- every short exact sequence $$0\to V'\to V\to V''\to 0$$ of rational $\Ga$-representations splits.
In particular, this means that we can naturally identify $(V/V')^{\Ga}$ and $V^{\Ga}/(V')^{\Ga}$.
In this paper we will focus on the group $\Ga=\GL_d(K)$. The field $K$ is algebraically closed and of characteristic zero, and $\GL_d(K)$ is thus reductive. 
Assume now that $\Ga$ acts on an affine variety $Y$. Geometric invariant theory deals with studying possible quotients of $Y$ by the action of $\Ga$ by considering the corresponding action of $\Ga$ on the coordinate ring $K[Y]$.  
We summarize in the following theorem well known results from geometric invariant theory which we will use in this paper:
\begin{theorem}
Let $Y$ be an affine variety, and let $\Ga$ be a reductive group acting on $Y$.
\begin{enumerate}
\item If $W_1$ and $W_2$ are two closed $\Ga$-stable subsets of $Y$ then there is $f\in K[Y]^{\Ga}$ such that $f(W_1)=0$ and $f(W_2)=1$  
\item The ring of invariants $K[Y]^{\Ga}$ is finitely generated and therefore its maximal spectrum $Z=Spec_m(K[Y]^{\Ga})$ is an affine variety. 
\item The map $\pi:Y\to Z$ induced from the inclusion $K[Y]^{\Ga}\hookrightarrow K[Y]$ is surjective. 
\item The points in $Z$ are in one to one correspondence with the closed $\Ga$-orbits in $Y$. 
\end{enumerate}
\end{theorem}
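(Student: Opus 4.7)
The backbone of all four parts is the \emph{Reynolds operator}. Since $\Ga$ is reductive and $K[Y]$ is an increasing union of finite-dimensional rational $\Ga$-modules, semisimplicity yields a canonical decomposition $K[Y] = K[Y]^\Ga \oplus M$, where $M$ is the sum of the non-trivial isotypic components. Let $R : K[Y] \to K[Y]^\Ga$ denote the corresponding projection. A key observation is that $R$ is $K[Y]^\Ga$-linear: for $f \in K[Y]^\Ga$, multiplication by $f$ is $\Ga$-equivariant and hence preserves the isotypic decomposition, forcing $R(fg) = f R(g)$. This tool drives (1), (3), and (4) in an essentially formal way.

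For (1), assuming $W_1 \cap W_2 = \emptyset$ (otherwise the statement is false), let $I_j$ be the vanishing ideal of $W_j$ in $K[Y]$; these are $\Ga$-stable, and the Nullstellensatz gives $I_1 + I_2 = K[Y]$. Then $0 \to I_1 \cap I_2 \to K[Y] \to K[Y]/I_1 \oplus K[Y]/I_2 \to 0$ is an exact sequence of rational $\Ga$-modules, and remains exact after taking invariants by semisimplicity. Lifting the $\Ga$-invariant vector $(0,1)$ on the right produces the desired $f$. For (3), given a maximal ideal $\mathfrak{m} \subset K[Y]^\Ga$, assume for contradiction that $\mathfrak{m} K[Y] = K[Y]$, so $1 = \sum h_i g_i$ with $h_i \in \mathfrak{m}$; applying $R$ and using its $K[Y]^\Ga$-linearity gives $1 = \sum h_i R(g_i) \in \mathfrak{m}$, a contradiction. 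Hence $\mathfrak{m} K[Y]$ lies in some maximal ideal of $K[Y]$, whose contraction to $K[Y]^\Ga$ must equal $\mathfrak{m}$. For (4), each fibre $\pi^{-1}(z)$ is a nonempty closed $\Ga$-stable subset, so by Noetherianity it contains a minimal, hence closed, $\Ga$-orbit; it cannot contain two such orbits since two disjoint closed orbits are separated by an invariant via (1), which would then take different values at $z$. The same separation yields injectivity on closed orbits.

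The real work is in (2), Hilbert--Nagata finite generation. The plan is to embed $Y$ as a $\Ga$-stable closed subvariety of a finite-dimensional rational representation $V$, reducing the problem to showing $K[V]^\Ga$ is finitely generated, and then passing to the quotient by the $\Ga$-stable ideal of $Y$ (which is compatible with $R$). In the graded ring $K[V]$, let $J$ be the ideal generated by homogeneous invariants of positive degree; by the Hilbert basis theorem $J$ is generated by finitely many elements, which after applying $R$ can be taken to be homogeneous invariants $f_1,\ldots,f_n$. A standard induction on degree, where a decomposition $h = \sum a_i f_i \in J$ is replaced by $R(h) = \sum R(a_i) f_i$ with each $R(a_i)$ of strictly smaller degree, proves $K[V]^\Ga = K[f_1,\ldots,f_n]$. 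This is the principal obstacle, since it requires genuine finite-generation arguments beyond the purely formal consequences of $R$ that suffice for the other three parts.
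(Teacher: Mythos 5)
The paper does not actually prove this theorem; it simply records the four statements and cites Newstead (Lemma~3.3, Theorem~3.4, Theorem~3.5 of \cite{Newstead}) for them. Your proof, by contrast, is a self-contained sketch via the Reynolds operator, and it is the standard textbook argument, so there is no real divergence of ideas to compare --- you have supplied what the paper delegates to a reference. A few remarks on accuracy. Your Reynolds argument for (1), (3), (4) is correct: in (1) the Chinese-Remainder exact sequence of rational $\Ga$-modules together with exactness of $(-)^\Ga$ does produce the separating invariant, and you are right to restore the (implicit, but needed) hypothesis $W_1\cap W_2=\emptyset$, which the paper omits; in (3) the identity $1=\sum h_i R(g_i)\in\mathfrak m$ is exactly the standard contradiction, and one then uses that $K$ is algebraically closed and $K[Y]^\Ga$ is reduced (as a subring of $K[Y]$) so that maximal ideals are points; in (4) the ``minimal orbit closure has empty boundary, hence is a closed orbit'' step is compressed but correct, and together with (1) it gives the bijection. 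For (2) your outline is the classical Hilbert--Nagata argument: embed $Y$ $\Ga$-equivariantly in a finite-dimensional rational representation $V$ (possible because $K[Y]$ is locally finite as a $\Ga$-module), use the Hilbert basis theorem plus the Reynolds operator and induction on degree to get finite generation of $K[V]^\Ga$, and then pass to the quotient $K[Y]^\Ga$ using that $(-)^\Ga$ preserves surjections. Everything checks out; the only thing to be aware of is that this is material the paper deliberately treats as known background rather than something it reproves.
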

\begin{proof}
The first statement is Lemma 3.3. in \cite{Newstead}. 
The second statement is the celebrated theorem of Nagata, see Theorem 3.4. in \cite{Newstead}. The third and fourth statement follow from Theorem 3.5. in \cite{Newstead}. The correspondence between closed orbits in $Y$ and points in $Z$ is induced by $\Ga\cdot y\mapsto \pi(\Ga\cdot y)$.
\end{proof}

\subsection{Linear algebra and natural identifications}\label{subsec:natid}
As before, assume that $K$ is an algebraically closed field of characteristic zero. Let $W$ be a finite dimensional $K$-vector space. 
We write $$W^{p,q}:=W^{\ot p}\ot (W^*)^{\ot q}.$$
This space is naturally identified with $\Hom_K(W^{\ot q,},W^{\ot p})$, where $w_1\ot\cdots w_p\ot g_1\ot\cdots \ot g_q$ corresponds to the linear transformation $$w'_1\ot'\cdots\ot w'_q\mapsto g_1(w'_1)\cdots g_q(w'_q)w_1\ot\cdots\ot w_p.$$
By using the natural identification $W^{**}\cong W$ we see that $(W^{p,q})^*\cong W^{q,p}$. The pairing $W^{p,q}\ot W^{q,p}\to K$ is given by 
$$
\big(w_1\ot\cdots\ot w_p\ot g_1\ot\cdots\ot g_q\big)\ot\big(w'_1\ot\cdots\ot w'_q\ot g'_1\ot\cdots\ot g'_p\big)\mapsto $$\begin{equation}\label{eq:duality}g_1(w'_1)\cdots g_q(w'_q)g'_1(w_1)\cdots g'_p(w_p).\end{equation}
Using the identification $W^{p,q}\cong \Hom_K(W^{\ot q},W^{\ot p})$ this is the same as the map
\begin{equation}\label{eq:tracepairing}\Hom_K(W^{\ot q},W^{\ot p})\ot \Hom_K(W^{\ot p},W^{\ot q})\to K\end{equation}
$$T_1\ot T_2\mapsto \Tr(T_1T_2)$$
We will write $ev_{p,q}:W^{p,q}\to W^{p-1,q-1}$ for the map
$$ev_{p,q}(w_1\ot\cdots\ot w_p\ot g_1\ot\cdots\ot g_q) = g_q(w_p)w_1\ot\cdots\ot w_{p-1}\ot g_1\ot\cdots\ot g_{q-1}.$$ We will omit the subscripts when no confusion can arise. 
We write $$ev^j:=ev_{p-j+1,q-j+1}ev_{p-j+2,q-j+2}\cdots ev_{p,q}.$$ Notice that in this setting $ev^n:W^{n,n}\to W^{0,0} = K$ is the map $T\mapsto \Tr(T)$.

If $U$ is any finite dimensional vector space then $T(U)$ denotes the tensor algebra on $U$ and $K[U]$ the algebra of polynomial functions of $U$. Both algebras are graded, and we have a natural identification
$$Z:(T(U^*)_n)_{S_n}\cong K[U]_n$$
$$Z(\ol{g_1\ot\cdots\ot g_n})(u) = g_1(u)\cdots g_n(u).$$
Where the action of $S_n$ on $T(U^*)_n = (U^*)^{\ot n}$ is given by permuting the tensor factors, and where for a group $G$ and a $G$-representation $V$, we write $V_G$ for the $G$-coinvariants $$V_G:= V/span\{g\cdot v-v\}_{g\in G,v\in V}.$$

The following lemma will be useful for comparing the invariants and coinvariants:
\begin{lemma}\label{lem:coinvariants}
\begin{enumerate}
\item Let $G$ be a reductive group acting on a vector space $V$. The composition of the natural inclusion $V^G\to V$ and the natural surjection $V\to V_G$ gives an isomorphism from the $G$-invariants to the $G$-coinvariants of $V$.
\item Let $G$ and $V$ be as in the previous part, and let $H$ be another reductive group acting on $V$ in such a way that $h(gv) = g(hv)$ for $h\in H, g\in G$ and $v\in V$. Then the group $G\times H$ acts on $V$ and we have natural isomorphisms:
$$(V^G)_H\cong V^{G\times H}\cong (V_H)^G.$$
\end{enumerate}
\end{lemma}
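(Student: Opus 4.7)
The plan is to exploit the semisimplicity of rational representations of reductive groups. For part (1), I would decompose $V$ into isotypic components as $V = V^G \oplus V'$, where $V^G$ is the trivial isotypic component and $V'$ is the sum of all non-trivial isotypic components. Since $G$ acts trivially on $V^G$, the composition $V^G \hookrightarrow V \to V_G$ is automatically injective on $V^G$, so it remains to show that $V' \subseteq \mathrm{span}\{gv-v\}_{g\in G, v\in V}$. For any non-trivial irreducible $G$-submodule $U \subseteq V'$, the subspace $\mathrm{span}\{gu-u : g \in G, u \in U\}$ is a $G$-submodule of $U$; if it were zero, then $U$ would be a trivial module, contradicting non-triviality. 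By irreducibility it must therefore equal $U$. Summing over the isotypic components proves $V' \subseteq \mathrm{span}\{gv-v\}$, which together with the injectivity gives the isomorphism $V^G \cong V_G$.

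For part (2), since the actions of $G$ and $H$ on $V$ commute, $H$ preserves $V^G$ and $G$ preserves $V^H$, so the groups $G\times H$, $H$ (on $V^G$), and $G$ (on $V^H$ and $V_H$) all act rationally on the relevant spaces. Applying part (1) to the reductive group $H$ acting on the rational $H$-representation $V^G$ yields
\[
(V^G)_H \;\cong\; (V^G)^H \;=\; V^G \cap V^H \;=\; V^{G\times H}.
\]
For the other isomorphism, I would apply part (1) to $H$ acting on $V$: the composite $V^H \hookrightarrow V \twoheadrightarrow V_H$ is an isomorphism built from $G$-equivariant maps, hence is itself $G$-equivariant. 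Taking $G$-invariants then gives $(V^H)^G \cong (V_H)^G$, and the left-hand side again equals $V^{G\times H}$.

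The argument is largely a bookkeeping exercise once the semisimplicity of rational $G$- and $H$-representations is invoked; the only mildly subtle point is ensuring that the relevant actions restrict to the subspaces in question, which follows immediately from the commutation assumption in part (2). I do not expect any substantive obstacle, since no deeper property of $G$ or $H$ than reductivity is required.
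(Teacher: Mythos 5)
Your proof is correct and closely parallels the paper's: both begin by invoking semisimplicity to split $V = V^G \oplus V'$. For part (1), the paper finishes the argument differently — it observes $(V')^G = 0$ and deduces $(V')_G = 0$ by noting that the surjection $V' \to (V')_G$ onto a trivial $G$-module would split by reductivity and so produce a nonzero invariant in $V'$, a contradiction. You instead argue directly that each nontrivial irreducible constituent $U \subseteq V'$ satisfies $U = \mathrm{span}\{gu-u : g\in G, u\in U\}$, hence $V'$ lies in the kernel of $V \to V_G$. Your route is a touch more hands-on, but it elides one small point that should be stated: having shown $V' \subseteq \mathrm{span}\{gv-v\}$, you also need the reverse inclusion $\mathrm{span}\{gv-v\}\subseteq V'$ (immediate, since writing $v = v_0+v'$ with $v_0\in V^G$, $v'\in V'$ gives $gv-v = gv'-v'\in V'$) in order to conclude that the kernel of $V\to V_G$ is exactly $V'$; only then is the injectivity of $V^G\to V_G$ actually established, rather than being ``automatic'' from $G$ acting trivially on $V^G$. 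For part (2), your chain of identifications — apply part (1) to $H$ acting on $V^G$ and on $V$, and transport $G$-equivariance through the isomorphism $V^H\cong V_H$ — is essentially the same as the paper's $(V_H)^G\cong (V^H)^G\cong V^{H\times G}\cong (V^G)^H\cong (V^G)_H$.
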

\begin{proof}
For the first assertion we use the fact that the reductivity of $G$ implies that the inclusion $V^G\to V$ splits. We can thus write $V=V^G\oplus V'$ for some $G$-representation $V'$. It holds that $V'^G=0$ and thus $V'_G=0$, since otherwise we have a surjection $V'\to V'_G$ which must split, contradicting the fact that $V'^G\neq 0$. Taking $G$-coinvariants of $V$ we get 
$$V_G = (V^G)_G\oplus V'_G= V^G\oplus 0 = V^G.$$ The map $V^G\to V\to V_G$ is then an isomorphism. 

For the second assertion, we notice that under the assumptions we made $H$ acts on $V^G$ and $G$ acts on $V_H$ in a natural way. Moreover, the isomorphism $V_H\to V\to V^H$ is an isomorphism of $G$-representations. Thus,
$$(V_H)^G\cong (V^H)^G\cong V^{H\times G}\cong (V^G)^H\cong (V^G)_H.$$
\end{proof}

\subsection{Algebraic structures}\label{subsec:algstr}
Recall from the introduction that an algebraic structure of type $((p_i,q_i))$ is given by a finite dimensional vector space $W$ and a collection of structure tensors $x_i\in W^{p_i,q_i}$. In other words- a point $(x_i)$ in the vector space $U(W) = \oplus_i W^{p_i,q_i}$. We will also refer to $(W,(x_i))$ as an algebraic structure in the sequel. A basis $B=\{e_i\}$ for $W$ gives us a dual basis $\{f_i\}$ for $W^*$. The tensor products of these bases gives us then also bases for $W^{p,q}$ for every $(p,q)\in \N^2$ and we can write
$$x_i = \sum_{j_1,\ldots,j_{p_i},k_1,\ldots,k_{q_i}}a^i_{j_1,\ldots,j_{p_i},k_1,\ldots,k_{q_i}}e_{j_1}\ot\cdots\ot e_{j_{p_i}}\ot f_{k_1}\ot\cdots\ot f_{k_{q_j}}.$$
The scalars $a^i_{j_1,\ldots,j_{p_i},k_1,\ldots,k_{q_i}}$ will be referred to as the \emph{structure constants} of $x_i$ with respect to the basis $B$. We will refer to them just as $a^{\bullet}_{\bullet}$ to ease notations. 

The group $\GL(W)$ acts on $U(W)$ in a natural way. The following result holds:
\begin{lemma} The $\GL(W)$-orbits in $U(W)$ correspond exactly to the isomorphism classes of structures of type $((p_i,q_i))$ on $W$. 
\end{lemma}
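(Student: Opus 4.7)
The plan is essentially to unwind the definitions: the claim should be a tautology once we identify the $\GL(W)$-action on $W^{p,q}$ with the action induced by change of basis on structure tensors.

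First I would recall explicitly how $\GL(W)$ acts on the building blocks $W^{p,q} = W^{\otimes p}\otimes (W^*)^{\otimes q}$: an element $\phi\in \GL(W)$ acts via $\phi^{\otimes p}\otimes (\phi^{-*})^{\otimes q}$, where $\phi^{-*} = (\phi^*)^{-1}$ is the inverse transpose, i.e.\ the natural dual action on $W^*$. The $\GL(W)$-action on $U(W) = \bigoplus_i W^{p_i,q_i}$ is then the diagonal action on each summand. Two points $(x_i),(y_i)\in U(W)$ lie in the same orbit precisely when there exists $\phi\in \GL(W)$ with $(\phi^{\otimes p_i}\otimes (\phi^{-*})^{\otimes q_i})(x_i)=y_i$ for every $i$.

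Next I would spell out the notion of isomorphism of algebraic structures. An isomorphism $(W,(x_i))\to (W,(y_i))$ of structures of the given type is a linear isomorphism $\phi:W\to W$ with the property that the induced isomorphism on each $W^{p_i,q_i}$ sends $x_i$ to $y_i$. The induced map on $W^{p_i,q_i}$ is by construction $\phi^{\otimes p_i}\otimes (\phi^{-*})^{\otimes q_i}$, which is exactly the $\GL(W)$-action described above. Hence an isomorphism of structures is the same thing as an element of $\GL(W)$ carrying $(x_i)$ to $(y_i)$.

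Combining these two observations gives the claim: the map sending a $\GL(W)$-orbit $\GL(W)\cdot (x_i)$ to the isomorphism class of the structure $(W,(x_i))$ is well defined and bijective. The only mild subtlety is to confirm that the dual action on $W^*$ does give the correct transformation law of structure constants — this can be checked in coordinates by computing how the constants $a^{\bullet}_{\bullet}$ from Subsection \ref{subsec:algstr} transform under a change of basis $\phi$, and observing that the result matches $(\phi^{\otimes p_i}\otimes (\phi^{-*})^{\otimes q_i})(x_i)$ expressed in the same basis. I expect this verification to be the only nontrivial step, but it amounts to a routine bookkeeping exercise and is not a real obstacle.
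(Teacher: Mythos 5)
Your proof is correct and follows essentially the same route as the paper's: both arguments simply unwind the definitions, identifying the $\GL(W)$-action on $U(W)$ (which you phrase via $\phi^{\otimes p_i}\otimes(\phi^{-*})^{\otimes q_i}$, and the paper phrases via intertwining maps $W^{\otimes q_i}\to W^{\otimes p_i}$) with the notion of isomorphism of structures. The two formulations coincide under the identification $W^{p_i,q_i}\cong\Hom_K(W^{\otimes q_i},W^{\otimes p_i})$, so there is nothing further to check.
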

\begin{proof}
Indeed, two sets of structure tensors $(x_i)$ and $(y_i)$ define isomorphic structures if and only if there is an invertible linear map $g\in \GL(W)$ that intertwines $x_i$ and $y_i$ for every $i$. In other words, for every $i$ the diagram
$$\xymatrix{
W^{\ot q_i}\ar[r]^{g^{\ot q_i}}\ar[d]^{x_i} & W^{\ot q_i}\ar[d]^{y_i}\\ 
W^{\ot p_i}\ar[r]^{g^{\ot p_i}} & W^{\ot p_i}
}
$$
should commute. 
 But this is the same as saying that $g(x_i) = y_i$ for every $i$. See also Lemma 3.2. in \cite{meir2} for a particular instance of this.
\end{proof}
In most cases we will be interested only in algebraic structures which satisfy certain set of axioms $\T$. Such structures often form a Zariski closed $\GL(W)$-stable subset $Y(W)\subseteq U(W)$, where we think here of $U(W)$ as an affine space. For example, a multiplication $m$ is given by the tuple of scalars $(m_{i,j}^k)_{i,j,k}$ for which $e_ie_j = \sum_k m_{i,j}^ke_k$. Associativity of $m$ is given by the polynomial equations $$\sum_a m_{i,j}^am_{a,k}^b = \sum_a m_{i,a}^bm_{j,k}^a$$ for every $i,j,k,b$. 

We will study in detail the defining ideal of $Y(W)$ in Section \ref{sec:axid}. In the terminology of the introduction we have $Y_d = Y(K^d)$ and $U_d = U(K^d)$.   

\subsection{Schur-Weyl duality}\label{subsec:SW}
Schur-Weyl duality describes the link between invariants with respect to general linear groups and representations of the symmetric groups. We recall here the details. 
For a finite dimensional vector space $V$ and $n\in \N$, $V^{\ot n}$ is a representation of $S_n$ in a natural way. A permutation $\sigma\in S_n$ acts via the formula
\begin{equation}\sigma \cdot (v_1\ot\cdots\ot v_n) = v_{\sigma^{-1}(1)}\ot\cdots\ot v_{\sigma^{-1}(n)}.\end{equation}
We denote the resulting linear map $V^{\ot n}\to V^{\ot n}$ by $L^{(n)}_{\sigma}\in \End(V^{\ot n})$. This map commutes with the natural diagonal action of $\GL(V)$. 
To state Schur-Weyl duality, we first recall that a \emph{partition} $\la$ of a natural number $n$ is a non-decreasing sequence $n_1\geq n_2\geq\cdots\geq n_r>0$ such that $\sum_i n_i = n$. We write  $\la\parti n$. Every partition can be thought of geometrically as a \emph{Young diagram}. There is one-to-one correspondence between partitions of $n$ and irreducible representations of $S_n$, which we shall write here as $\la\leftrightarrow \S_{\la}$. Following \cite{Sagan} we call $\S_{\la}$ the \emph{Specht module} corresponding to $\la$. We will write $r(\la)$ for the number of non-zero rows in the partition $\la$.
Schur-Weyl duality is the following statement (see also the discussion in I.1 and Theorem 4.3. in \cite{Procesi}):
\begin{theorem}[Schur-Weyl duality]\label{thm:SW}
\begin{enumerate} 
\item The linear map 
$$\Phi_V:KS_n\to (\End(V)^{\ot n})^{\GL(V)}$$ $$\sigma\mapsto L^{(n)}_{\sigma}$$ is a surjective ring homomorphism.
\item Consider the Wedderburn decomposition of the group algebra of $S_n$, $$KS_n \cong \bigoplus_{\la\parti n} \End(\S_{\la}).$$ The kernel of $\Phi_V$ is $$\bigoplus_{\substack{\la\parti n \\ r(\la)> \dim(V)}}\End(\S_{\la})$$
where $r(\la)$ is the number of non-zero rows in $\la$. 
As a result, we have an isomorphism of algebras $$(\End(V)^{\ot m})^{\GL(V)}\cong \bigoplus_{\substack{\la\parti n \\ r(\la)\leq \dim(V)}} \End(\S_{\la}).$$ 
\end{enumerate}
\end{theorem}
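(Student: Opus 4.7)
My plan is to deduce both statements from the double commutant theorem for semisimple subalgebras of matrix algebras, combined with a polarization argument and Zariski density of $\GL(V)$ in $\End(V)$. First I would identify $\End(V)^{\ot n}$ with $\End(V^{\ot n})$, so that the diagonal $\GL(V)$-action on the left becomes conjugation by $g^{\ot n}$ on the right. The map $\Phi_V$ is a ring homomorphism because $L^{(n)}_{\sigma\tau}=L^{(n)}_{\sigma}L^{(n)}_{\tau}$, and each $L^{(n)}_{\sigma}$ commutes with every $g^{\ot n}$, so the image of $\Phi_V$ lies in $(\End(V^{\ot n}))^{\GL(V)}$.

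For surjectivity, set $A:=\Phi_V(KS_n)\subseteq \End(V^{\ot n})$. Since $KS_n$ is semisimple in characteristic zero, so is its quotient $A$. The commutant $A'$ inside $\End(V^{\ot n})$ consists of those elements of $(\End V)^{\ot n}$ fixed by the $S_n$-action permuting tensor factors, namely the symmetric power $S^n(\End V)$. By polarization, $S^n(\End V)$ is spanned by the diagonal tensors $\{M^{\ot n}:M\in \End V\}$, and since $\GL(V)$ is Zariski dense in $\End V$ while $M\mapsto M^{\ot n}$ is polynomial, the span of $\{g^{\ot n}:g\in \GL(V)\}$ already equals $S^n(\End V)=A'$. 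The double commutant theorem then yields $A=A''$, and $A''$ is the centralizer of the diagonal $\GL(V)$-action, which is exactly $(\End(V^{\ot n}))^{\GL(V)}$. This proves part (1).

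For part (2), since $KS_n\cong \bigoplus_{\la\parti n}\End(\S_{\la})$ the quotient $A$ is the sum of those blocks $\End(\S_{\la})$ on which $\Phi_V$ does not vanish, and the kernel is the direct sum of the remaining blocks. A block $\End(\S_{\la})$ is killed precisely when the Young symmetrizer $c_{\la}$ acts as zero on $V^{\ot n}$, equivalently when the Weyl module $c_{\la}\cdot V^{\ot n}$ vanishes. If $r(\la)>\dim V$, then $c_{\la}$ includes an antisymmetrization over the first column of length $r(\la)$, which forces the image to be zero by the pigeonhole principle. Conversely, when $r(\la)\le \dim V$, I would take the tableau $T$ of shape $\la$ filled by rows and set $v=e_{i_1}\ot\cdots\ot e_{i_n}$ with $i_j$ the row of $T$ containing $j$; one then checks that $c_{\la}v\neq 0$. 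Combining the two cases identifies the kernel with $\bigoplus_{r(\la)>\dim V}\End(\S_{\la})$ and yields the claimed isomorphism. The main obstacle is the nonvanishing statement in this last step: while the pigeonhole direction is immediate, verifying $c_{\la}v\neq 0$ requires a direct combinatorial argument tracking which basis tensors survive the row-symmetrization and column-antisymmetrization inside $c_{\la}$; this is essentially Weyl's construction of the irreducible polynomial $\GL(V)$-representations, and is where the link between partitions of bounded row number and irreducibles of $\GL(V)$ enters.
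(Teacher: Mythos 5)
The paper does not prove Theorem \ref{thm:SW}; it simply cites Procesi \cite{Procesi} (Theorem 4.3 and the discussion in I.1), where the argument is precisely the double-commutant one you spell out. Your reasoning is correct and matches the standard proof: part (1) via semisimplicity of $KS_n$, the identification of the commutant of $\Phi_V(KS_n)$ with $S^n(\End V)$, Zariski density of $\GL(V)$ plus polarization to see that this is spanned by $\{g^{\ot n}\}$, and the double commutant theorem; part (2) via the Young symmetrizer $c_{\la}$, using $\Lambda^{r(\la)}V=0$ when $r(\la)>\dim V$ and the row-filling tableau vector when $r(\la)\le\dim V$. The one step you should make explicit is the equivalence ``the block $\End(\S_{\la})$ is killed $\iff$ $c_{\la}$ acts as zero'': this holds because the block is a simple two-sided ideal of $KS_n$, hence is generated as a two-sided ideal by any nonzero element such as $c_{\la}$, so $c_{\la}V^{\ot n}=0$ forces $e_{\la}V^{\ot n}=0$ and conversely. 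With that remark inserted, your argument is a complete and correct proof along the same lines as the cited source.
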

\begin{remark} 
Another way to describe the kernel of $\Phi_V$ is the following: 
If $\dim(V)\geq n$ then $\Phi_V$ is injective, and if $\dim(V)<n$ then $\Ker(\Phi_V)$ is the two-sided ideal of $KS_n$ generated by the idempotent \begin{equation}
\frac{1}{(d+1)!}\sum_{\sigma\in S_{d+1}}(-1)^{\sigma}\sigma\end{equation}
where $d=\dim(V)$ (See Proposition 4.1 in \cite{meir2}). 
\end{remark}

\subsection{Block decomposition permutations}\label{subsec:permutations}For two integers $a\leq b$ we use the notation $$[a,b] = \{n\in \N| a\leq n\leq b\}.$$
Since we will not use the real numbers in this paper this will cause no confusion with intervals in the real line.

An \emph{unordered partition } $\la=(n_1,\ldots n_k)$ of $n\in \N$ is a sequence of integers which satisfies $\sum_i n_i = n$ (for regular partitions we also require that $n_1\geq n_2\geq\cdots\geq n_k$).
We write $$S_{\la} = S_{n_1,\ldots,n_r}:= S_{n_1}\times\cdots\times S_{n_k}$$
and $$I^{\la}_i = [(\sum_{j=1}^{i-1}n_j)  + 1, \sum_{j=1}^i n_j]$$ 
The unordered partition $\la$ gives rise to two maps $\Pi_{\la}:S_{\la}\to S_n$ and $\Omega^{\la}:S_k\to S_n$ which will be used in this paper.
\begin{definition}\label{def:Pi} We define 
$$\Pi_{\la}: S_{\la}\to S_n$$ to be the group embedding for which $\Pi(S_{n_i})$ permutes the elements of $I^{\la}_i$. 
\end{definition}
In places where it will cause no confusion we will identify between $(\sigma_i)\in S_{\la}$ and its image under $\Pi_{\la}$ in $S_n$. 

Next we give the definition of block permutations. 
\begin{definition}\label{def:Omega}
The permutation $\Omega^{\la}(\sigma)\in S_n$ is the unique permutation which satisfies the following conditions:
\begin{enumerate}
\item If $x\in I^{\la}_i$, $y\in I^{\la}_j$ and $i\neq j$ then $\Omega^{\la}(\sigma)(x)<\Omega^{\la}(\sigma)(y)$ if and only if $\sigma(i)<\sigma(j)$.
\item If $x,y\in I^{\la}_i$ then $\Omega^{\la}(\sigma)(x)<\Omega^{\la}(\sigma)(y)$ if and only if $x<y$
\end{enumerate}
\end{definition}
In other words, $\Omega^{\la}(\sigma)$ rearranges the numbers $\{1,\ldots,n\}$ in such an order that the elements of $I^{\la}_{\sigma^{-1}(1)}$ appear first, the elements of $I^{\la}_{\sigma^{-1}(2)}$ appear after them and so on. It does not change the inner order of the elements of each of the $I^{\la}_i$'s. 
If, for example $k=3$, $n=5$ and $\la = (1,2,2)$ then 
$$\Omega^{\la}((13)) =   \Bigl(\begin{matrix}
    1 & 2 & 3 & 4 & 5 \\
    4 & 5 & 2 & 3 & 1 
  \end{matrix}\Bigr).
$$
In cycles notation we get $\Omega^{\la}((13)) = (14325)$ which already shows that $\Omega^{\la}$ is in general not a group homomorphism, as it sends an element of order 2 to an element of order 5. 
In fact $\Omega^{\la}$ is a group homomorphism if and only if $n_1=n_2=\ldots =n_k$. In this special case $\Omega^{\la}$ can be written explicitly. Write first $m=n_1=\cdots=n_k$ and $$\beta:[1,k]\times [1,m]\to [1,n]$$ $$(i,j)\mapsto (i-1)m + j.$$
The map $\beta$ is a bijection and it induces a natural group homomorphism $\ol{\beta}:S_k\times S_m\to S_n$ given by $$\ol{\beta}(\sigma,\tau)(x) = \beta(\sigma(i),\tau(j))$$ where $\beta^{-1}(x)= (i,j).$ The map $\Omega^{\la}$ is then given by the composition $$S_k\to S_k\times S_m\stackrel{\ol{\beta}}{\to} S_n$$ where the first map is given by $\sigma\mapsto (\sigma,\Id)$.

\begin{remark}\label{rem:uptoconj}
Up to conjugation in $S_n$ this map is also given by the composition 
$$S_k\to (S_k)^m\stackrel{\Pi_{(k,k,\ldots,k)}}{\longrightarrow} S_n$$ where the first map is the diagonal embedding. This will be used in Section \ref{sec:hilbert} to derive formulas for the Hilbert function of the universal ring of invariants.
\end{remark}


\begin{definition}\label{def:alpha}
Assume that $n=\sum_{i=1}^k n_ip_i$. We write $$\alpha^{(p_i)}_{(n_i)}:S_{(n_i)}\to S_n$$ for the following group homomorphism
$$S_{(n_i)} = S_{n_1}\times\cdots\times S_{n_k}\stackrel{\Omega^{\la_1}\times\cdots\times \Omega^{\la_k}}{\longrightarrow } 
S_{n_1p_1}\times\cdots\times S_{n_kp_k}\stackrel{\Pi_{(n_ip_i)}}{\rightarrow }S_n$$
where $\la_i = (\underbrace{p_i,p_i,\ldots,p_i}_{n_i \text{ times}})$.
\end{definition}

\subsection{Littlewood-Richardson and Kronecker coefficients}
When $n=a+b$ we have an embedding $\Pi_{(a,b)}:S_a\times S_b\to S_n$. 
We can restrict representations of $S_n$ along $\Pi_{(a,b)}$ and write 
$$\Pi^*_{(a,b)}([\S_{\la}]) = \sum_{\la_a\parti a, \la_b\parti b}c^{\la}_{\la_a,\la_b}[\S_{\la_a}\boxtimes \S_{\la_b}]= \sum_{\la_a\parti a, \la_b\parti b}c^{\la}_{\la_a,\la_b}[\S_{\la_a}]\ot [\S_{\la_b}],$$ where $\S_{\la_a}\boxtimes\S_{\la_b}$ stands for the vector space $\S_{\la_a}\ot \S_{\la_b}$ with the tensor product action of $S_a\times S_b$. The term $[V]$ stands for the class of the representation $V$ in the relevant Grothendieck group, where we use the identification $\text{K}_0(G\times H)\cong \text{K}_0(G)\ot \text{K}_0(H)$. 
The coefficients $c^{\la}_{\la_a,\la_b}$ are called the \emph{Littlewood-Richardson coefficients}. Following \cite{meir5} we will give the following generalization when the unordered partition which appears in $\Pi$ has more than two components. 
\begin{definition}\label{def:LRco}
Let $n=n_1+n_2+\ldots +n_k$, and let $\Pi_{(n_1,\ldots,n_k)}:S_{n_1}\times\cdots\times S_{n_k}\to S_n$ be the group homomorphism defined in \ref{subsec:permutations}.
We write 
$$\Pi_{(n_1,\ldots,n_k)}^*([\S_{\la}]) = \sum_{\substack{\la_1\parti n_1\\ \vdots\\ \la_k\parti n_k}}c^{\la}_{(\la_1,\ldots,\la_k)}[\S_{\la_1}]\boxtimes\cdots\boxtimes [\S_{\la_k}]$$
The coefficients $c^{\la}_{(\la_1,\ldots,\la_k)} = c^{\la}_{(\la_i)}$ are called the iterated Littlewood-Richardson coefficients.
\end{definition}
For the Kronecker coefficients, let $\la$ and $\mu$ be two partitions of $n$. The tensor product of the Specht modules $\S_{\la}\ot\S_{\mu}$ with the diagonal $S_n$-action splits as a direct sum of simple $S_n$-representations and we can write 
$$\S_{\la}\ot \S_{\mu} \cong \bigoplus_{\nu\parti n } \S_{\nu}^{g(\la,\mu,\nu)}.$$
The natural numbers $g(\la,\mu,\nu)$ are known as the \emph{Kronecker cofficients}. For more on Kronecker coefficients and Littlewood-Richardson coefficients see \cite{BVO}.
Since all the representations of $S_n$ are self dual, the Kronecker coefficients are also given by the formula  
$$g(\la,\mu,\nu) = \dim\Hom_{S_n}(\S_{\la}\ot\S_{\mu}\ot \S_{\nu},\one).$$
If $\la_1,\ldots, \la_k$ are partitions of $n$ we define the \emph{iterated Kronecker coefficients } to be $$g(\la_1,\la_2,\ldots,\la_k):=\dim\Hom_{S_n}(\S_{\la_1}\ot\cdots\ot \S_{\la_k},\one).$$
Another way to describe the iterated Kronecker coefficients is the following:
if we write $diag:S_n\to (S_n)^k$ for the diagonal embedding, then
\begin{equation}\label{eq:Kronecker} diag^*([\S_{\la_1}\boxtimes\cdots\boxtimes \S_{\la_k}]) = \sum_{\la\parti n}g(\la_1,\ldots,\la_k,\la)[\S_{\la}]\end{equation}
In the relevant Grothendieck rings. 
We will use this identification in Section \ref{sec:hilbert} in calculating the Hilbert functions of $\Ainf$.

\subsection{Commutative bialgebras and Hopf algebras}
The $K$-algebras which we will consider in this paper are commutative and graded by the monoid $\N^r$. 
Recall that such an algebra $A$ is said to be a \emph{bialgebra} if it is equipped with maps $\epsilon:A\to K$ and $\Delta:A\to A\ot A$ 
such that $(A,\Delta,\epsilon)$ is a coalgebra, which means that the dual axioms to that of a unital algebra are satisfied, and such that $\epsilon$ and $\Delta$ are algebra maps. This bialgebra is said to be \emph{graded} if $\Delta$ and $\epsilon$ preserve the grading, where it is understood that $K$ has degree $(0,0,\ldots,0)$ and $A_{n_1,\ldots,n_r}\ot A_{m_1,\ldots,m_r}$ has degree $(n_1+m_1,\ldots,n_r+m_r)$. This means in particular that $\epsilon(A_{n_1,\ldots n_r})=0$ unless $(n_1,\ldots,n_r) = (0,0,\ldots,0)$.
A bialgebra is said to be a \emph{Hopf algebra} if it admits an antipode. This is a linear map $S:A\to A$ which satisfies
$$\forall a\in A:\quad m(S\ot 1)\Delta(a) = m(1\ot S)\Delta(a) = \epsilon(a)1.$$ The antipode $S$, if it exists, can be understood as the inverse of $\Id_A\in Hom_K(A,A)$ under the convolution product induced by $\Delta$ and $m$. 


A \emph{group-like element} in a bialgebra $A$ is an element $g\in A$ which satisfies $\Delta(g)=g\ot g$ and $\epsilon(g)=1$.
The set of group-like elements in $A$ forms a monoid which we denote by $G(A)$. In case $A$ is a Hopf algebra this monoid is in fact a group, where the inverse of $g\in G(A)$ is $S(g)$. 
 
A \emph{primitive element} in a bialgbera $A$ is an element $x\in A$ which satisfies the equations $\Delta(x) = x\ot 1 + 1\ot x$ and $\epsilon(x)=0$.  
The set of primitive elements in $A$ is a $K$-subspace which we will denote by $\Pp(A)$. 
All the bialgebras we will encounter in this paper will be either generated by group-like elements or by primitive elements. 
For more on Hopf algebras see \cite{Sweedler}, in particular Chapters III and IV. 

\section{A proof of Theorem \ref{thm:mainstructure}}\label{sec:proofthm1}
\subsection{The non-invariant coordinate algebra}\label{subsec:bigalgebra}
Let $W=K^d$ and let $$U(W)=\bigoplus_{i=1}^r W^{p_i,q_i}.$$
We will write $U=U(W)$ when $W$ is fixed. 

We write an element in $U$ as $(x_i)_{i=1}^r$. We thus think of points of $U$ as possible algebraic structures of type $((p_i,q_i))$ on $W$. 
Write $U_{(i)}=W^{p_i,q_i}$. Then $U=\bigoplus_i U_{(i)}$
and we have a natural isomorphism 
$$K[U] = K[U_{(1)}\oplus U_{(2)}\oplus\cdots\oplus U_{(r)}]\cong
K[U_{(1)}]\ot\cdots\ot K[U_{(r)}]$$
For every $i$ the degree of polynomials gives a grading on $K[U_{(i)}]$ by $\N$. This gives  a grading on $K[U]$ by $\N^r$. Using the isomorphism $Z$ of Subsection \ref{subsec:natid} we have 
$$K[U]_{n_1,\ldots,n_r} = K[U_{(1)}]_{n_1}\ot\cdots\ot K[U_{(r)}]_{n_r}\cong $$
$$(U_{(1)}^*)^{\ot n_1}_{S_{n_1}}\ot\cdots\ot (U_{(r)}^*)^{\ot n_r}_{S_{n_r}}\cong$$
$$((U_{(1)}^*)^{\ot n_1}\ot\cdots\ot (U_{(r)}^*)^{\ot n_r})_{S_{n_1}\times\cdots\times S_{n_r}}.$$

Write $$U_{n_1,\ldots, n_r}:=((U_{(1)}^*)^{\ot n_1}\ot\cdots\ot (U_{(r)}^*)^{\ot n_r}).$$ 
This vector space is isomorphic to 
$W^{n',n}$ where $n'=\sum_i q_in_i$ and $n=\sum_i n_ip_i$. 

We begin by studying the action of $S_{n_1,\ldots,n_r}=S_{n_1}\times\cdots\times S_{n_r}$ on this vector space. 
For this, it will be enough to study the action of $S_n$ on $(V^*)^{\ot n}$ where $V=W^{p,q}$ .
The action of $\sigma\in S_n$ is given by $$\sigma\cdot (t_1\ot t_2\ot\cdots\ot t_n) = t_{\sigma^{-1}(1)}\ot\cdots\ot t_{\sigma^{-1}(n)}$$
where $t_i\in T^*\cong W^{q,p}$ (see Equation \ref{eq:duality}). If every $t_i$ is a basic tensor of the form $t_i= w_{i1}\ot\cdots\ot w_{iq}\ot f_{i1}\ot\cdots\ot f_{ip}$ then we see that after applying the natural isomorphism $(T^*)^{\ot n}\cong W^{nq,np}$ given by grouping all the $W$ tensorands before all the $W^*$ tensoarnds, the permutation $\sigma\in S_n$ acts on $(T^*)^{\ot n}$ by the formula:
$$\sigma\cdot (w_{11}\ot\cdots\ot w_{1q}\ot \cdots\ot w_{n1}\ot\cdots\ot w_{nq}\ot f_{11}\ot\cdots\ot f_{1p}\ot\cdots\ot f_{n1}\ot\cdots\ot f_{np})= $$
$$w_{\sigma^{-1}(1)1}\ot\cdots\ot w_{\sigma^{-1}(1)q}\ot f_{\sigma^{-1}(1)1}\ot\cdots\ot f_{\sigma^{-1}(1)p}\ot \cdots\ot f_{\sigma^{-1}(n)1}\ot\cdots\ot f_{\sigma^{-1}(n)p}.$$

Identifying $W^{nq,np}$ with $\Hom_K(W^{\ot np},W^{\ot nq})$, this is the same as
$$\sigma\cdot T = L^{(nq)}_{\Omega^{(p^n)}(\sigma)}T(L^{(np)}_{\Omega^{(p^n)}(\sigma^{-1})})$$
where we use the terminology of Definition \ref{def:Omega}.

Going now back to the general case, the permutations $\Omega^{(p^n)}(\sigma)$ glue together to give the permutation $\alpha^{(p_i)}_{(n_i)}$, and similarly when we replace $(p_i)$ by $(q_i)$, see Definition \ref{def:alpha}. The conclusion of this is the following: write $n = \sum_i p_in_i$ and $n'=\sum_i q_in_i$. 
\begin{lemma}\label{lem:actionformula}
The action of $S_{n_1,\ldots n_r}$ on $U_{n_1,\ldots, n_r}$ is given by the formula:
$$(\sigma_i)\cdot T = L^{(n')}_{\alpha^{(q_i)}_{(n_i)}(\sigma_i)}T(L^{(n)}_{\alpha^{(p_i)}_{(n_i)}(\sigma_i^{-1})})$$
\end{lemma}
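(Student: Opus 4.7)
The strategy is to reduce the multi-factor statement to the single-factor case already worked out in the paragraph preceding the lemma, and then assemble the resulting permutations via the two group homomorphisms $\Pi$ and $\Omega$ that make up $\alpha$.

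First I would unpack the natural action. Each factor $(\sigma_i) \in S_{n_1,\ldots,n_r}$ acts only on the $i$-th tensorand $(U_{(i)}^*)^{\otimes n_i}$, and the factors act independently. Thus it suffices to understand, for fixed $i$, the action of $\sigma_i \in S_{n_i}$ on $(U_{(i)}^*)^{\otimes n_i}$, and then tensor these actions together. The text already establishes that when $V = W^{p,q}$, the $S_n$-action on $(V^*)^{\otimes n}$, viewed through the isomorphism $(V^*)^{\otimes n} \cong W^{nq,np} \cong \Hom_K(W^{\otimes np}, W^{\otimes nq})$ that groups all $W$-factors before all $W^*$-factors, is given by $T \mapsto L^{(nq)}_{\Omega^{(p^n)}(\sigma)} T (L^{(np)}_{\Omega^{(p^n)}(\sigma^{-1})})$. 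Specialising to $(p,q)=(p_i,q_i)$ and $n=n_i$, the single-factor action is conjugation by the block permutations $\Omega^{\la_i}(\sigma_i)$ with $\la_i = (p_i,\ldots,p_i)$ ($n_i$ times) on the $W$-side and $\la_i' = (q_i,\ldots,q_i)$ ($n_i$ times) on the $W^*$-side.

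Next I would combine the $r$ factors. After regrouping, the tensor product $\bigotimes_i (U_{(i)}^*)^{\otimes n_i}$ becomes $W^{n',n}$ with the $W^{\otimes n_i p_i}$-blocks concatenated in order $i=1,\ldots,r$ to form $W^{\otimes n}$, and similarly for the $W^*$-blocks forming $W^{\otimes n'}$. Since each $\sigma_i$ only permutes the $i$-th block, its action on $\Hom_K(W^{\otimes n}, W^{\otimes n'})$ is obtained by embedding $\Omega^{\la_i}(\sigma_i) \in S_{n_i p_i}$ into $S_n$ via $\Pi_{(n_1 p_1,\ldots,n_r p_r)}$, and similarly $\Omega^{\la_i'}(\sigma_i) \in S_{n_i q_i}$ into $S_{n'}$ via $\Pi_{(n_1 q_1,\ldots,n_r q_r)}$. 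By Definition \ref{def:alpha}, the resulting permutation on the input side is exactly $\alpha^{(p_i)}_{(n_i)}(\sigma_i^{-1})$ and on the output side exactly $\alpha^{(q_i)}_{(n_i)}(\sigma_i)$. Since $\Phi_{W}$ (or the map $\sigma \mapsto L^{(n)}_\sigma$) is a group homomorphism, the conjugation formulas from the individual factors concatenate into the single conjugation formula of the lemma.

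The main obstacle, such as it is, is purely bookkeeping: verifying that the concatenation of the blockwise $\Omega$'s really coincides with the composition $\Pi_{(n_i p_i)} \circ (\Omega^{\la_1} \times \cdots \times \Omega^{\la_r})$ used in Definition \ref{def:alpha}. This follows because $\Omega^{\la_i}(\sigma_i)$ acts trivially outside the block $I^{\la}_i$, so its image under the inclusion $\Pi_{(n_1 p_1,\ldots,n_r p_r)}$ coincides with what one obtains by permuting elements within $I^{\la}_i$ and leaving the rest fixed; the product over $i$ of these commuting permutations is precisely $\alpha^{(p_i)}_{(n_i)}(\sigma_i)$. Putting the two sides together and using that $L^{(-)}_{(-)}$ respects both multiplication and inverses yields the claimed formula.
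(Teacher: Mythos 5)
Your proposal is correct and follows essentially the same route as the paper: reduce to the single-factor case worked out in the preceding paragraph, where $\sigma_i$ acts on $(W^{p_i,q_i\,*})^{\otimes n_i}$ by conjugation with the equal-block permutations $\Omega^{(p_i^{n_i})}$ and $\Omega^{(q_i^{n_i})}$, and then observe that tensoring the $r$ factors amounts to embedding these blockwise permutations via $\Pi_{(n_ip_i)}$ and $\Pi_{(n_iq_i)}$, which by Definition \ref{def:alpha} is exactly $\alpha^{(p_i)}_{(n_i)}$ and $\alpha^{(q_i)}_{(n_i)}$. You make the "glue together" step of the paper slightly more explicit, but the decomposition and the bookkeeping are the same.
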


\subsection{The action of $\Ga$ on $K[U]$ and the algebra of invariants}\label{subsec:invariantalg}
Since the action of $S_{n_1}\times\cdots\times S_{n_r}$ commutes with the action of $\Ga=\GL(W)$ and since finite groups are reductive in characteristic zero, Lemma \ref{lem:coinvariants} gives 
$$(K[U]_{n_1,\ldots n_r})^{\Ga}\cong( (U_{n_1,\ldots, n_r})_{S_{n_1}\times\cdots\times S_{n_r}})^{\Ga}\cong (U_{n_1,\ldots, n_r}^{\Ga})_{S_{n_1}\times\cdots\times S_{n_r}}.$$
We have the following isomorphism of $\Ga$-representations:
$$U_{n_1,\ldots ,n_r}\cong W^{n',n}$$ where $n'=\sum_i n_iq_i$ and $n=\sum_i n_ip_i$. By Theorem \ref{thm:SW} we get the following description of $U_{n_1,\ldots n_r}^{\Ga}$:
\begin{enumerate}
\item If $n\neq n'$ then $(U_{n_1,\ldots, n_r})^{\Ga}=0$.
\item If $n=n'$ then $(U_{n_1,\ldots, n_r})^{\Ga}$ is spanned by $(L^{(n)}_{\sigma})_{\sigma\in S_n}$ where we identify $U_{n_1,\ldots, n_r}\cong W^{n,n}\cong \End_K(W^{\ot n})$. Moreover, the linear relations between the elements $L^{(n)}_{\sigma}$ are the following:
If $n\leq d$ then $L^{(n)}_{\sigma}$ are all linearly independent, and if $n>d$ then the linear relations are spanned by relations of the form 
$$\sum_{\sigma\in S_{d+1}}(-1)^{\sigma}L^{(n)}_{\tau_1\sigma\tau_2}$$ where $\tau_1,\tau_2\in S_n$.
\end{enumerate}
By Lemma \ref{lem:actionformula} the action of $(\sigma_i)\in S_{n_1,\ldots,n_r}$ on $(U_{n_1,\ldots, n_r})^{\Ga}$ is given by 
$$(\sigma_i)\cdot L^{(n)}_{\sigma} = L^{(n)}_{\alpha^{(q_i)}_{(n_i)}(\sigma_i)\sigma\alpha^{(p_i)}_{(n_i)}(\sigma_i^{-1})}$$

We can now describe explicitly the space $(K[U]_{n_1,\ldots,n_r})^{\Ga}$: 
By following the duality isomorphisms and using Equation \ref{eq:tracepairing} we see that the image of the element $L^{(n)}_{\sigma}$ in $(U_{n_1,\ldots n_r}^{\Ga})_{S_{n_1,\ldots n_r}}$ is the polynomial map \begin{equation}p(n,\sigma,n_1,\ldots,n_r)((x_i)):= Tr_{W^{\ot n}}\big(L^{(n)}_{\sigma}\circ(\xini)\big).\end{equation}
In the sequel we will evaluate these polynomials for algebraic structures of different dimensions. When it will be necessary to indicate the specific vector space we will also write $p(n,\sigma,n_1,\ldots,n_r)(W,(x_i))$. 
This discussion can be summarized in the following proposition:
\begin{proposition}\label{prop:invpolys}
Write $n=\sum_i p_in_i$ and $n'=\sum_i q_in_i$. If $n\neq n'$ then $K[U]^{\Ga}_{n_1,\ldots n_r}=0$. If $n=n'$ then $K[U]^{\Ga}_{n_1,\ldots,n_r}$
is spanned by the polynomials $p(n,\sigma,n_1,\ldots,n_r)$ for $\sigma\in S_n$. The linear relations between these polynomials are spanned by the following two types of relations:
\begin{enumerate}
\item[R1] For every $(\sigma_i)\in S_{n_1,\ldots,n_r}$ we have \begin{equation}\label{eq:cyclicity}p(n,\sigma,n_1,\ldots,n_r) = p(n,\alpha^{(q_i)}_{(n_i)}(\sigma_i)\sigma\alpha^{(p_i)}_{(n_i)}(\sigma_i^{-1}),n_1,\ldots,n_r).\end{equation}
\item[R2] If $n>d$ then for every $\tau_1,\tau_2\in S_n$ we have $$\sum_{\sigma\in S_{d+1}}(-1)^{\sigma}p(n,\tau_1\sigma\tau_2,n_1,\ldots ,n_r)=0.$$

\end{enumerate}
\end{proposition}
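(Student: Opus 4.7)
The plan is to combine the identifications from the previous subsection with Schur--Weyl duality. The starting point is the chain of isomorphisms
\[
(K[U]_{n_1,\ldots,n_r})^{\Ga}\cong \bigl((U_{n_1,\ldots,n_r})_{S_{n_1,\ldots,n_r}}\bigr)^{\Ga}\cong \bigl((U_{n_1,\ldots,n_r})^{\Ga}\bigr)_{S_{n_1,\ldots,n_r}},
\]
where the second isomorphism uses Lemma \ref{lem:coinvariants} applied to the commuting actions of $\Ga$ and $S_{n_1,\ldots,n_r}$. So the task reduces to computing $(U_{n_1,\ldots,n_r})^{\Ga}$ as an $S_{n_1,\ldots,n_r}$-representation, then passing to coinvariants.

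For the $\Ga$-invariants, I would use the identification $U_{n_1,\ldots,n_r}\cong W^{n',n}$ noted in Subsection \ref{subsec:bigalgebra}. When $n\neq n'$, the vanishing $(W^{n',n})^{\Ga}=0$ follows immediately by restricting to the center of $\GL(W)$: the scalar $\lambda\in K^*$ acts on $W^{n',n}$ by $\lambda^{n'-n}$, so nontrivial invariants force $n=n'$. When $n=n'$, the identification $W^{n,n}\cong \End_K(W^{\ot n})$ together with part (1) of Theorem \ref{thm:SW} shows that $(W^{n,n})^{\Ga}$ is spanned by the operators $L^{(n)}_{\sigma}$, and the Remark after Theorem \ref{thm:SW} gives that the linear relations among them, when $n>d$, form the two-sided ideal of $KS_n$ generated by the antisymmetrizer on $S_{d+1}$; as a vector subspace this is spanned by the elements $\sum_{\sigma\in S_{d+1}}(-1)^{\sigma}\tau_1\sigma\tau_2$ for $\tau_1,\tau_2\in S_n$. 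Translating this through the trace pairing \eqref{eq:tracepairing} gives relations of type $R2$.

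For the coinvariants, I would apply the action formula of Lemma \ref{lem:actionformula} to $L^{(n)}_{\sigma}$, obtaining
\[
(\sigma_i)\cdot L^{(n)}_{\sigma}=L^{(n)}_{\alpha^{(q_i)}_{(n_i)}(\sigma_i)\,\sigma\,\alpha^{(p_i)}_{(n_i)}(\sigma_i^{-1})}.
\]
Since coinvariants are defined by quotienting out $g\cdot v - v$, this immediately yields relations of type $R1$. To show that $R1$ and $R2$ span \emph{all} relations after taking coinvariants, I would use that taking $S_{n_1,\ldots,n_r}$-coinvariants is exact (the group is finite, hence reductive), so we first impose the Schur--Weyl relations (giving $R2$) and then the coinvariant relations (giving $R1$), and no further relations arise.

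Finally, I would check that the class of $L^{(n)}_{\sigma}$ in the coinvariants is sent to the polynomial $p(n,\sigma,n_1,\ldots,n_r)$ under the identification $Z$ of Subsection \ref{subsec:natid}, by unwinding the trace pairing \eqref{eq:tracepairing}: the element $L^{(n)}_{\sigma}\in \End(W^{\otimes n})\cong W^{n,n}$, paired against $x_1^{\ot n_1}\ot\cdots\ot x_r^{\ot n_r}\in W^{n,n}$ (after rearrangement through the $\alpha$-permutations), gives exactly $\Tr_{W^{\otimes n}}(L^{(n)}_{\sigma}\cdot(\xini))$. The main obstacle I anticipate is bookkeeping: keeping straight the distinction between the permutation $\sigma\in S_n$ acting on $L^{(n)}_\sigma$ and the block permutations $\alpha^{(p_i)}_{(n_i)}(\sigma_i)$ and $\alpha^{(q_i)}_{(n_i)}(\sigma_i)$ that arise from the different roles of covariant and contravariant tensor factors. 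The rest of the argument is essentially a clean concatenation of Schur--Weyl duality, reductivity, and the trace pairing.
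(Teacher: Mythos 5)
Your proposal is correct and follows essentially the same route as the paper: both pass from $(K[U]_{n_1,\ldots,n_r})^{\Ga}$ to $((U_{n_1,\ldots,n_r})^{\Ga})_{S_{n_1,\ldots,n_r}}$ via Lemma \ref{lem:coinvariants}, compute the $\Ga$-invariants by Schur--Weyl duality, and then read off $R1$ from the action formula of Lemma \ref{lem:actionformula}. The only (minor, and in fact cleaner) deviation is your use of the central character $\lambda\mapsto\lambda^{n'-n}$ to dispatch the case $n\neq n'$, where the paper cites Theorem \ref{thm:SW} somewhat loosely.
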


\begin{definition}
We call the polynomials $p(n,\sigma,n_1,\ldots, n_r)$ the basic polynomial invariants of multi-degree $(n_1,\ldots,n_r)$.
\end{definition}
\begin{proof}[Proof of Theorem \ref{thm:mainstructure}]
Proposition \ref{prop:invpolys} gives a linear description of the ring of invariants $K[U_d]^{GL_d(K)}$. The product of two basic invariants is given in Proposition \ref{prop:ainfform} and follows from the fact that if $T_i:W_i\to W_i$ for $i=1,2$ then 
$$\Tr_{W_1\ot W_2}(T_1\ot T_2) = \Tr_{W_1}(T_1)\Tr_{W_2}(T_2).$$
For relations of type $R3$, we write $I_{\T,d}:= \Ker(K[U_d]\to K[Y_d])$. We thus have a short exact sequence 
$$0\to I_{\T,d}\to K[U_d]\to K[Y_d]\to 0$$ which gives the short exact sequence 
$$0\to I_{\T,d}^{\GL_d(K)}\to K[U_d]^{\GL_d(K)}\to K[Y_d]^{\GL_d(K)}\to 0.$$
This finishes the proof of Theorem \ref{thm:mainstructure}. We Will give in Section \ref{sec:axid} a uniform description of $I_{\T,d}^{\GL_d(K)}$. 
\end{proof}

Recall that $$X_d = \{[(W,(x_i))]| (W,(x_i))\text{ is a $d$-dimensional structure with a closed } \GL_d(K)\text{ orbit} \}\text{ and }$$
$$X = \bigsqcup_{d\geq 0} X_d.$$
A closed $\GL_d(K)$ orbit means here that $\GL_d(K)\cdot ((x_i))$ is Zariski-closed inside $U(K_d)$. 
For every $d$, $X_d$ has the structure of an affine variety, and $K[X_d] = K[U(K^d)]^{\GL_d}$ where $U(K^d)$ is described in the beginning of Subsection \ref{subsec:bigalgebra}.
The first definition of $K[X]$ (Definition \ref{def:firstdefkx}) gives us a natural surjective homomorphism 
$$\Phi_d:K[X]\to K[U_d]^{\GL_d(K)}=K[X_d].$$ 
\begin{definition} We write $I_d$ for the kernel of $\Phi_d$.  
It is spanned by elements of the form 
$$\sum_{\sigma\in S_{d+1}}(-1)^{\sigma}p(n,\tau_1\sigma\tau_2,n_1,\ldots,n_r)$$ where $\tau_1,\tau_2\in S_n$ and $n= \sum_i p_in_i = \sum_i q_in_i$ is bigger than $d$. 
\end{definition}

Since $I_d \cap(\Ainf)_{n_1,\ldots,n_r}$ is non-zero if and only if $\sum_i n_ip_i = \sum_i n_iq_i =n >d$ and all the ideals $I_d$ are graded, it holds that 
$$\bigcap_{d\geq 0} I_d = 0$$. 

Write $K^X$ for the commutative algebra of all functions from $X$ to $K$ with pointwise addition and multiplication. The maps $\{\Phi_d\}$ for $d\geq 0$ glue together to give a map $\Xi: \Ainf\to K^X$ with kernel $\bigcap I_d=0$. The map $\Xi$ is thus injective and enables us to think of elements of $K[X]$ as functions from $X$ to $K$. 
One might ask if the image of $\Xi$ separates points in $X$. That is- if for every two elements $x_1\neq x_2$ of $X$ there is an element $c\in \Ainf$ such that $\Xi(c)(x_1)\neq \Xi(c)(x_2)$. This is almost the case. From invariant theory (see Subsection \ref{subsec:GIT}) we know that elements of $\Xi(\Ainf)$ can separate two different points in $X_d$ for a given $d$. However, the image might fail to separate $x_1\in X_{d_1}$ and $x_2\in X_{d_2}$ for $d_1\neq d_2$. 

Consider for example the case of an algebraic structures which is a vector space with a single linear endomorphism. More specifically, consider the two structures 
$(K,T_1)$ and $(K^2,T_2)$ where $$T_1=\Id:K\to K$$ and $$T_2= \begin{pmatrix} 1 & 0 \\ 0 & 0 \end{pmatrix}:K^2\to K^2.$$ In this case all the invariants will be generated by traces of powers of the linear endomorphism (see Section \ref{sec:example}), and the fact that $\Tr(T_1^n) = \Tr(T_2^n)=1$ for every $n>0$ means that $\Xi(\Ainf)$ cannot distinguish these two structures.

To overcome this, we will introduce in Section \ref{sec:ainfwinf} another commutative algebra $\winf$ and we will show that $\winf\cong \Ainf\ot K[D]$. We The map $\Xi$ can be extended to $\winf$ by sending $D$ to the function $$[(W,(x_i))]\mapsto \dim_K W.$$ This extension of $\Xi$ to  is still injective, and it has the advantage that it also separates points in $X$.
\begin{definition}\label{def:charinv}
If $(W,(x_i))$ is any algebraic structure then we will denote by $\chi_{(W,(x_i))}:\winf\to K$ the ring homomorphism arising from evaluating closed diagrams on $W$.
We call $\chi_{(W,(x_i))}$ the character of invariants of $W$.
\end{definition}

\section{String diagrams}\label{sec:diagrams}
Let $(W,(x_i))$ be an algebraic structure of type $((p_i,q_i))$.
Throughout this paper we will visualise some linear maps and scalar invariants using a certain type of string diagrams. To this end a \emph{diagram} contains
\begin{enumerate} 
\item A collection of boxes labeled by an element in $\{x_1,\ldots,x_r,\Id_W\}$. A box with label $x_i$ has $q_i$ input strings at the bottom, and $p_i$ output strings at the top. 
A box labeled with $\Id_W$ has one input string and one output string. 
\item Connections between some of the output strings with the input strings (but no connections between two output strings or two input strings). 
\item A permutation of the free (non-connected) input strings and a permutation of the free output strings.
\end{enumerate}
Each such diagram can be interpreted as a linear transformation. 
A box with label $x_i$ corresponds to the linear transformation given by $x_i$, and a box with label $\Id_W$ corresponds to the identity map of $W$. Placing boxes one next to the other corresponds to taking the tensor product of the corresponding linear transformations. Permuting the strings corresponds to permuting the tensor factors of $W$, and connecting an input string with an output string correspond to applying the evaluation map $ev:W\ot W^*\to K$ on two particular copies of $W$ and $W^*$. 
If a diagram has $q$ free (non-connected) input strings and $p$ free output strings then it represents a linear map in $W^{p,q}\cong \Hom_K(W^{\ot q},W^{\ot p})$.
Such a diagram will be referred to as a diagram of degree $(p,q)$. 
Diagrams of degree $(0,0)$ will be referred to as \emph{closed} diagrams. They represent linear maps $W^{\ot 0} = K\to W^{\ot 0}=K$, or scalars. 
For example, the diagram
\begin{equation}
\begin{tikzpicture}[baseline={(current  bounding  box.center)}]
	\begin{pgfonlayer}{nodelayer}
		\node [style=none] (3) at (-7, 3.5) {};
		\node [style=none] (4) at (-7.25, 2.5) {};
		\node [style=none] (5) at (-6.75, 2.5) {};
		\node [style=none] (6) at (-4.5, 2.5) {};
		\node [style=none] (7) at (-4.25, 3.5) {};
		\node [style=none] (8) at (-4.75, 3.5) {};
		\node [style=none] (9) at (-2.25, 3.5) {};
		\node [style=none] (10) at (-2.25, 2.5) {};
		\node [style=none] (11) at (-7.25, 1.75) {};
		\node [style=none] (12) at (-6.75, 1.75) {};
		\node [style=none] (13) at (-4.75, 4.25) {};
		\node [style=none] (14) at (-4.25, 4.25) {};
		\node [style=none] (15) at (-4.5, 1.75) {};
		\node [style=none] (16) at (-2.25, 4.25) {};
		\node [style=none] (17) at (-2.25, 1.75) {};
		\node [style=none] (18) at (-7, 4.25) {};
		\node [style=1function] (19) at (-2.25, 3) {$x_3$};
		\node [style=2function] (20) at (-4.5, 3) {$x_2$};
		\node [style=2function] (21) at (-7, 3) {$x_1$};
	\end{pgfonlayer}
	\begin{pgfonlayer}{edgelayer}
		\draw (14.center) to (7.center);
		\draw (13.center) to (8.center);
		\draw (18.center) to (3.center);
		\draw (5.center) to (12.center);
		\draw (11.center) to (4.center);
		\draw (6.center) to (15.center);
		\draw (9.center) to (16.center);
		\draw (10.center) to (17.center);
	\end{pgfonlayer}
\end{tikzpicture}
\end{equation}
represents $x_1\ot x_2\ot x_3\in W^{4,4}$ and so has degree $(4,4)$, while  
\begin{equation}
\begin{tikzpicture}[baseline={(current  bounding  box.center)}]
	\begin{pgfonlayer}{nodelayer}
		\node [style=none] (3) at (-6.75, 3.5) {};
		\node [style=none] (4) at (-7, 2.5) {};
		\node [style=none] (5) at (-6.5, 2.5) {};
		\node [style=none] (6) at (-3.25, 2.5) {};
		\node [style=none] (7) at (-5.25, 3.5) {};
		\node [style=none] (8) at (-4.75, 3.5) {};
		\node [style=none] (9) at (-3.25, 3.5) {};
		\node [style=none] (10) at (-5, 2.5) {};
		\node [style=none] (11) at (-6.5, 1.25) {};
		\node [style=none] (12) at (-7, 1.25) {};
		\node [style=none] (13) at (-5.25, 4.25) {};
		\node [style=none] (14) at (-4.75, 4.25) {};
		\node [style=none] (15) at (-5, 1.25) {};
		\node [style=none] (16) at (-3.25, 4.25) {};
		\node [style=none] (17) at (-3.75, 1.75) {};
		\node [style=none] (18) at (-6.75, 4.25) {};
		\node [style=1function] (19) at (-3.25, 3) {$x_3$};
		\node [style=2function] (20) at (-5, 3) {$x_2$};
		\node [style=2function] (21) at (-6.75, 3) {$x_1$};
		\node [style=none] (22) at (-2, 4.25) {};
		\node [style=none] (23) at (-2, 1.75) {};
	\end{pgfonlayer}
	\begin{pgfonlayer}{edgelayer}
		\draw [in=90, out=-90] (14.center) to (7.center);
		\draw [in=90, out=-90] (13.center) to (8.center);
		\draw (18.center) to (3.center);
		\draw [in=90, out=-90, looseness=1.25] (5.center) to (12.center);
		\draw [in=-90, out=90] (11.center) to (4.center);
		\draw [in=75, out=-105, looseness=0.75] (6.center) to (15.center);
		\draw (9.center) to (16.center);
		\draw [in=105, out=-90] (10.center) to (17.center);
		\draw [bend left=270, looseness=1.50] (22.center) to (16.center);
		\draw (22.center) to (23.center);
		\draw [in=-75, out=-90, looseness=1.50] (23.center) to (17.center);
	\end{pgfonlayer}
\end{tikzpicture}
\end{equation}
represents $ev(L^{(4)}_{(23)}(x_1\ot x_2\ot x_3)L^{(4)}_{(12)(34)})\in W^{3,3}$.
Notice that the order of the free input and free output strings is important. This is the reason we include the permutations as part of the data of the diagram. 

\begin{definition}\label{def:eqDi} Two diagrams $Di_1,Di_2$ of degree $(p,q)$ are equivalent if they describe the same linear transformation in $W^{p,q}$ for every finite dimensional vector space $W$ and every collection $(x_i)$ of structure tensors.
This equivalence relation is generated by the following two relations:
\begin{enumerate}
\item \textbf{Permutations relation}: If $Di_1$ can be obtained from $Di_2$ by permuting the boxes while keeping the free input and output strings fixed. So for example we have the following equivalence of diagrams:
\begin{equation}
\begin{tikzpicture}
	\begin{pgfonlayer}{nodelayer}
		\node [style=1function] (0) at (-7, 1) {$x_1$};
		\node [style=1function] (1) at (-5.75, 1) {$x_2$};
		\node [style=none] (2) at (-7, 1.5) {};
		\node [style=none] (3) at (-7, 2.25) {};
		\node [style=none] (4) at (-5.75, 2.25) {};
		\node [style=none] (5) at (-5.75, 1.5) {};
		\node [style=none] (6) at (-5.75, 0.5) {};
		\node [style=none] (7) at (-5.75, -0.25) {};
		\node [style=none] (8) at (-7, 0.5) {};
		\node [style=none] (9) at (-7, -0.25) {};
		\node [style=1function] (10) at (-3.25, 1) {$x_2$};
		\node [style=1function] (11) at (-2.25, 1) {$x_1$};
		\node [style=none] (12) at (-3.25, 1.5) {};
		\node [style=none] (13) at (-3.25, 1.75) {};
		\node [style=none] (14) at (-2.25, 1.75) {};
		\node [style=none] (15) at (-2.25, 1.5) {};
		\node [style=none] (16) at (-2.25, 0.5) {};
		\node [style=none] (17) at (-2.25, 0.25) {};
		\node [style=none] (18) at (-3.25, 0.5) {};
		\node [style=none] (19) at (-3.25, 0.25) {};
		\node [style=none] (20) at (-4.25, 1) {=};
		\node [style=none] (21) at (-2.25, -0.25) {};
		\node [style=none] (22) at (-3.25, -0.25) {};
		\node [style=none] (23) at (-3.25, 2.25) {};
		\node [style=none] (24) at (-2.25, 2.25) {};
	\end{pgfonlayer}
	\begin{pgfonlayer}{edgelayer}
		\draw (3.center) to (2.center);
		\draw (4.center) to (5.center);
		\draw (8.center) to (9.center);
		\draw (6.center) to (7.center);
		\draw (13.center) to (12.center);
		\draw (14.center) to (15.center);
		\draw (18.center) to (19.center);
		\draw (16.center) to (17.center);
		\draw [in=90, out=-90] (24.center) to (13.center);
		\draw [in=90, out=-90] (23.center) to (14.center);
		\draw [in=90, out=-90, looseness=0.75] (17.center) to (22.center);
		\draw [in=105, out=-105, looseness=0.75] (19.center) to (21.center);
	\end{pgfonlayer}
\end{tikzpicture}
\end{equation}
\item \textbf{Identity reduction}: if $Di_1$ can be obtained from $Di_2$ (or vice versa) by using the fact that composition with $\Id_W$ is the identity morphism. More precisely, we have the following equivalences between diagrams:
\begin{equation}\begin{tikzpicture}
	\begin{pgfonlayer}{nodelayer}
		\node [style=multi function] (0) at (2.75, 0.25) {$Di$};
		\node [style=1function] (1) at (6.5, 0.25) {Id$_W$};
		\node [style=none] (2) at (1.5, -0.25) {};
		\node [style=none] (3) at (2.25, -0.25) {};
		\node [style=none] (4) at (4.5, -0.25) {};
		\node [style=none] (5) at (1.5, 0.5) {};
		\node [style=none] (6) at (2.25, 0.5) {};
		\node [style=none] (7) at (4.5, 0.5) {};
		\node [style=none] (8) at (1.5, 1.25) {};
		\node [style=none] (9) at (2.25, 1.25) {};
		\node [style=none] (10) at (4.5, 1.25) {};
		\node [style=none] (11) at (1.5, -0.75) {};
		\node [style=none] (12) at (2.25, -0.75) {};
		\node [style=none] (13) at (4.5, -0.5) {};
		\node [style=none] (14) at (6.5, 0.5) {};
		\node [style=none] (15) at (6.5, 1) {};
		\node [style=none] (16) at (6.5, -0.25) {};
		\node [style=none] (17) at (6.5, -0.5) {};
		\node [style=none] (18) at (5.5, 1) {};
		\node [style=none] (19) at (5.5, -0.5) {};
		\node [style=none] (20) at (3, -0.75) {$\cdots$};
		\node [style=none] (21) at (3, 1.25) {$\cdots$};
		\node [style=none] (44) at (7.5, 0) {$\Large{=}$};
		\node [style=multi function] (68) at (10.75, 0.25) {$Di$};
		\node [style=none] (70) at (9.5, -0.25) {};
		\node [style=none] (71) at (10.25, -0.25) {};
		\node [style=none] (72) at (12.5, -0.25) {};
		\node [style=none] (73) at (9.5, 0.5) {};
		\node [style=none] (74) at (10.25, 0.5) {};
		\node [style=none] (75) at (12.5, 0.5) {};
		\node [style=none] (76) at (9.5, 1.25) {};
		\node [style=none] (77) at (10.25, 1.25) {};
		\node [style=none] (78) at (12.5, 1.25) {};
		\node [style=none] (79) at (9.5, -0.75) {};
		\node [style=none] (80) at (10.25, -0.75) {};
		\node [style=none] (81) at (12.5, -0.75) {};
		\node [style=none] (88) at (11, -0.75) {$\cdots$};
		\node [style=none] (89) at (11, 1.25) {$\cdots$};
		\node [style=multi function] (90) at (6.5, -3) {$Di$};
		\node [style=1function] (91) at (10.25, -3) {Id$_W$};
		\node [style=none] (92) at (5.25, -3.5) {};
		\node [style=none] (93) at (6, -3.5) {};
		\node [style=none] (94) at (8.25, -3.5) {};
		\node [style=none] (95) at (5.25, -2.75) {};
		\node [style=none] (96) at (6, -2.75) {};
		\node [style=none] (97) at (8.25, -2.75) {};
		\node [style=none] (98) at (5.25, -2) {};
		\node [style=none] (99) at (6, -2) {};
		\node [style=none] (100) at (8.25, -2.5) {};
		\node [style=none] (101) at (5.25, -4) {};
		\node [style=none] (102) at (6, -4) {};
		\node [style=none] (103) at (8.25, -4) {};
		\node [style=none] (104) at (10.25, -2.75) {};
		\node [style=none] (105) at (10.25, -2) {};
		\node [style=none] (106) at (10.25, -3.5) {};
		\node [style=none] (107) at (10.25, -3.75) {};
		\node [style=none] (108) at (9.25, -2.5) {};
		\node [style=none] (109) at (9.25, -3.75) {};
		\node [style=none] (110) at (6.75, -4) {$\cdots$};
		\node [style=none] (111) at (6.75, -2) {$\cdots$};
		\node [style=none] (112) at (13.5, 0) {$\Large{=}$};
	\end{pgfonlayer}
	\begin{pgfonlayer}{edgelayer}
		\draw (11.center) to (2.center);
		\draw (12.center) to (3.center);
		\draw (4.center) to (13.center);
		\draw (10.center) to (7.center);
		\draw (9.center) to (6.center);
		\draw (8.center) to (5.center);
		\draw (15.center) to (14.center);
		\draw (16.center) to (17.center);
		\draw (18.center) to (19.center);
		\draw [bend left=90, looseness=2.00] (18.center) to (15.center);
		\draw [bend left=90, looseness=2.25] (19.center) to (13.center);
		\draw (79.center) to (70.center);
		\draw (80.center) to (71.center);
		\draw (72.center) to (81.center);
		\draw (78.center) to (75.center);
		\draw (77.center) to (74.center);
		\draw (76.center) to (73.center);
		\draw (101.center) to (92.center);
		\draw (102.center) to (93.center);
		\draw (94.center) to (103.center);
		\draw (100.center) to (97.center);
		\draw (99.center) to (96.center);
		\draw (98.center) to (95.center);
		\draw (105.center) to (104.center);
		\draw (106.center) to (107.center);
		\draw (108.center) to (109.center);
		\draw [bend left=270, looseness=2.50] (108.center) to (100.center);
		\draw [bend right=90, looseness=3.00] (109.center) to (107.center);
	\end{pgfonlayer}
\end{tikzpicture}
\end{equation}
\begin{equation}\label{eq:usingid} \begin{tikzpicture}
	\begin{pgfonlayer}{nodelayer}
		\node [style=multi function] (0) at (7.5, -5) {$Di$};
		\node [style=1function] (1) at (11.75, -5) {Id$_W$};
		\node [style=none] (2) at (6, -5.5) {};
		\node [style=none] (3) at (7, -5.5) {};
		\node [style=none] (4) at (9.25, -5.5) {};
		\node [style=none] (5) at (6, -4.75) {};
		\node [style=none] (6) at (7, -4.75) {};
		\node [style=none] (7) at (9.25, -4.75) {};
		\node [style=none] (8) at (6, -4) {};
		\node [style=none] (9) at (7, -4) {};
		\node [style=none] (10) at (9.25, -4) {};
		\node [style=none] (11) at (6, -6.25) {};
		\node [style=none] (12) at (7, -6.25) {};
		\node [style=none] (13) at (9.25, -6.25) {};
		\node [style=none] (14) at (11.75, -4.75) {};
		\node [style=none] (15) at (11.75, -4) {};
		\node [style=none] (16) at (11.75, -5.5) {};
		\node [style=none] (17) at (11.75, -6.25) {};
		\node [style=none] (18) at (10.5, -4) {};
		\node [style=none] (19) at (10.5, -6.25) {};
		\node [style=none] (20) at (8, -6) {$\cdots$};
		\node [style=none] (21) at (8, -4) {$\cdots$};
		\node [style=none] (22) at (13, -4) {};
		\node [style=none] (23) at (13, -6.25) {};
		\node [style=none] (24) at (13.75, -5) {$\Large{=}$};
		\node [style=multi function] (25) at (17.25, -5) {$Di$};
		\node [style=none] (26) at (15.75, -5.5) {};
		\node [style=none] (27) at (16.75, -5.5) {};
		\node [style=none] (28) at (19, -5.5) {};
		\node [style=none] (29) at (15.75, -4.75) {};
		\node [style=none] (30) at (16.75, -4.75) {};
		\node [style=none] (31) at (19, -4.75) {};
		\node [style=none] (32) at (15.75, -4) {};
		\node [style=none] (33) at (16.75, -4) {};
		\node [style=none] (34) at (19, -4) {};
		\node [style=none] (35) at (15.75, -6.25) {};
		\node [style=none] (36) at (16.75, -6.25) {};
		\node [style=none] (37) at (19, -6.25) {};
		\node [style=none] (38) at (17.75, -6) {$\cdots$};
		\node [style=none] (39) at (17.75, -4) {$\cdots$};
		\node [style=none] (40) at (20.5, -4) {};
		\node [style=none] (41) at (20.5, -6.25) {};
	\end{pgfonlayer}
	\begin{pgfonlayer}{edgelayer}
		\draw (11.center) to (2.center);
		\draw (12.center) to (3.center);
		\draw (4.center) to (13.center);
		\draw (10.center) to (7.center);
		\draw (9.center) to (6.center);
		\draw (8.center) to (5.center);
		\draw (15.center) to (14.center);
		\draw (16.center) to (17.center);
		\draw (18.center) to (19.center);
		\draw [bend right=90, looseness=1.50] (19.center) to (17.center);
		\draw [bend left=270, looseness=1.25] (18.center) to (10.center);
		\draw [bend left=90, looseness=1.25] (15.center) to (22.center);
		\draw (22.center) to (23.center);
		\draw [bend left=90] (23.center) to (13.center);
		\draw (35.center) to (26.center);
		\draw (36.center) to (27.center);
		\draw (28.center) to (37.center);
		\draw (34.center) to (31.center);
		\draw (33.center) to (30.center);
		\draw (32.center) to (29.center);
		\draw [bend left=270, looseness=1.25] (40.center) to (34.center);
		\draw (40.center) to (41.center);
		\draw [bend left=90, looseness=1.50] (41.center) to (37.center);
	\end{pgfonlayer}
\end{tikzpicture}
\end{equation}
\end{enumerate}
\end{definition}
As the notations above suggest we will consider equivalent diagrams as equal in what follows. 

The \emph{product} $Di_1\star Di_2$ of two diagrams is formed by placing $Di_1$ to the left of $Di_2$. 
By considering the interpretation of diagrams as morphisms, if $Di_1:W^{\ot a}\to W^{\ot b}$ and $Di_2:W^{\ot c}\to W^{\ot d}$ the product $Di_1\star Di_2$ represents the tensor product $Di_1\ot Di_2:W^{\ot (a+c)}\to W^{\ot (b+d)}$.

By rearranging the boxes, which is possible by the permutation relation, every diagram is equivalent to a diagram in which the boxes appear in order, where $x_1<x_2<\ldots<x_r<\Id_W$.
More precisely, every diagram of degree $(p,q)$ can be brought to a diagram of the form
\begin{equation}\begin{tikzpicture}
	\begin{pgfonlayer}{nodelayer}
		\node [style=multi function] (0) at (-5, -1.75) {$\tau$};
		\node [style=multi function] (1) at (-5, -0.25) {$\overline{Di}$};
		\node [style=multi function] (2) at (-5, 1.25) {$\sigma$};
		\node [style=none] (3) at (-7, -2.25) {};
		\node [style=none] (4) at (-6.5, -2.25) {};
		\node [style=none] (5) at (-4.5, -2.25) {};
		\node [style=none] (6) at (-3.25, -2.25) {};
		\node [style=none] (7) at (-4, -2.25) {};
		\node [style=none] (8) at (-7, 1.5) {};
		\node [style=none] (9) at (-6.5, 1.5) {};
		\node [style=none] (10) at (-4.5, 1.5) {};
		\node [style=none] (11) at (-3.25, 1.5) {};
		\node [style=none] (12) at (-4, 1.5) {};
		\node [style=none] (13) at (-0.75, 1.5) {};
		\node [style=none] (14) at (0.5, 1.5) {};
		\node [style=none] (15) at (0, 1.5) {};
		\node [style=none] (16) at (-0.75, -2.25) {};
		\node [style=none] (17) at (0.5, -2.25) {};
		\node [style=none] (18) at (0, -2.25) {};
		\node [style=none] (19) at (-5.75, 2) {$\cdots$};
		\node [style=none] (20) at (-5.75, -2.75) {$\cdots$};
		\node [style=none] (21) at (-6.5, 3.25) {};
		\node [style=none] (22) at (-7, 3.25) {};
		\node [style=none] (24) at (-7, -4) {};
		\node [style=none] (25) at (-6.5, -4) {};
		\node [style=none] (26) at (-3.5, 2) {$\cdots$};
		\node [style=none] (27) at (-3.5, -2.75) {$\cdots$};
		\node [style=none] (28) at (-7, 0.25) {};
		\node [style=none] (29) at (-6.5, 0.25) {};
		\node [style=none] (30) at (-4.5, 0.25) {};
		\node [style=none] (31) at (-4, 0.25) {};
		\node [style=none] (32) at (-3.25, 0.25) {};
		\node [style=none] (33) at (-7, 1) {};
		\node [style=none] (34) at (-6.5, 1) {};
		\node [style=none] (35) at (-4.5, 1) {};
		\node [style=none] (36) at (-4, 1) {};
		\node [style=none] (37) at (-3.25, 1) {};
		\node [style=none] (38) at (-7, -0.75) {};
		\node [style=none] (39) at (-6.5, -0.75) {};
		\node [style=none] (40) at (-4.5, -0.75) {};
		\node [style=none] (41) at (-4, -0.75) {};
		\node [style=none] (42) at (-3.25, -0.75) {};
		\node [style=none] (43) at (-7, -1.5) {};
		\node [style=none] (44) at (-6.5, -1.5) {};
		\node [style=none] (45) at (-4.5, -1.5) {};
		\node [style=none] (46) at (-4, -1.5) {};
		\node [style=none] (47) at (-3.25, -1.5) {};
		\node [style=none] (48) at (-5.75, -1) {$\cdots$};
		\node [style=none] (49) at (-5.75, 0.5) {$\cdots$};
		\node [style=none] (50) at (-9.75, -0.25) {$c(j,\sigma,\tau,n_1,\ldots,n_r,m) = $};
	\end{pgfonlayer}
	\begin{pgfonlayer}{edgelayer}
		\draw [bend left=90, looseness=1.25] (10.center) to (14.center);
		\draw (14.center) to (17.center);
		\draw [bend left=90, looseness=1.25] (17.center) to (5.center);
		\draw [bend left=90, looseness=1.25] (12.center) to (15.center);
		\draw (15.center) to (18.center);
		\draw [bend left=90, looseness=1.25] (18.center) to (7.center);
		\draw [bend left=90, looseness=1.25] (11.center) to (13.center);
		\draw (13.center) to (16.center);
		\draw [bend left=90, looseness=1.25] (16.center) to (6.center);
		\draw (21.center) to (9.center);
		\draw (22.center) to (8.center);
		\draw (3.center) to (24.center);
		\draw (25.center) to (4.center);
		\draw (33.center) to (28.center);
		\draw (34.center) to (29.center);
		\draw (35.center) to (30.center);
		\draw (36.center) to (31.center);
		\draw (37.center) to (32.center);
		\draw (42.center) to (47.center);
		\draw (41.center) to (46.center);
		\draw (40.center) to (45.center);
		\draw (39.center) to (44.center);
		\draw (38.center) to (43.center);
	\end{pgfonlayer}
\end{tikzpicture}
\end{equation}
where $\sigma\in S_{p+j}$, $\tau\in S_{q+j}$, the number of connections of input and output strings is $j$, and $$\overline{Di} = x_1^{\star n_1}\star\cdots\star x_r^{\star n_r}\star \Id_W^{\star m}.$$ This diagram represents the linear transformation
$$ev^j(L^{(p+j)}_{\sigma}(x_1^{\ot n_1}\ot \cdots\ot x_r^{\ot n_r}\ot \Id_W^{\ot m})L^{(q+j)}_{\tau}).$$ It is possible that two tuples of data $(j,\sigma,\tau,n_1,\ldots,n_r,m)$ and $(j',\sigma',\tau',n_1',\ldots,n_r',m')$ will define equivalent diagrams. This can happen by the equivalence relation of Definition \ref{def:eqDi}, or by altering both $\sigma$ and $\tau$ by permuting the last $j$ connected strings in the same way. We summarize this in the following lemma:
\begin{lemma}\label{lem:eqDi}
Two diagrams are equivalent if and only if one can be obtained from the other by the following rules:
\begin{enumerate}
\item If $\mu\in S_j$ then $c(j,\iota_1(\mu)\sigma,\tau,n_1,\ldots,n_r,m) = c(j,\sigma,\tau\iota_2(\mu),n_1,\ldots,n_r,m)$
where $\iota_1:S_j\to S_{p+j}$ is given by $S_j\stackrel{1\times Id}{\to}S_p\times S_j\to S_{p+j}$ and $\iota_2:S_j\to S_{q+j}$ is defined similarly.
\item If $m>0$, $j>0$, $\sigma(p+j) = p+j$ and $\tau(q+j)\neq q+j$ then $$c(j,\sigma,\tau,n_1,\ldots,n_r,m) = c(j-1,\sigma, (\tau(q+j),q+j)\tau,n_1,\ldots,n_r,m-1).$$ Similarly, if $m>0,j>0$, $\sigma(p+j)\neq p+j$ and $\tau(q+j)=q+j$ then 
$$c(j,\sigma,\tau,n_1,\ldots,n_r,m) = c(j-1,(\sigma(p+j),p+j)\sigma,\tau,n_1,\ldots,n_r,m-1).$$ 
\item Write $\alpha_1= \alpha^{(p_1,\ldots, p_r,1)}_{(n_1,\ldots,n_r,m)}: S_{n_1}\times\cdots\times S_{n_r}\times S_m\to S_{p+j}$ and $\alpha_2 = \alpha^{(q_1,\ldots, q_r,1)}_{(n_1,\ldots,n_r,m)}: S_{n_1}\times\cdots\times S_{n_r}\times S_m\to S_{q+j}$. Then for every $(\mu_i)\in S_{n_1}\times\cdots\times S_{n_r}\times S_m$ we have 
$$c(j,\sigma\alpha_1((\mu_i)),\tau,n_1,\ldots,n_r,m) = c(j,\sigma,\alpha_2((\mu_i))\tau,n_1,\ldots, n_r,m)$$
\end{enumerate}
\end{lemma}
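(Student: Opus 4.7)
The plan is to split the claim into validity (each of the three rules is a genuine equivalence between the displayed normal-form diagrams) and completeness (these three rules generate all equivalences between such diagrams in the sense of Definition \ref{def:eqDi}). Validity is essentially a direct translation of the two generators of Definition \ref{def:eqDi}. Rule (3) is the permutations relation applied to the identical boxes inside $\overline{Di}$: reordering the $n_i$ copies of $x_i$ and the $m$ copies of $\Id_W$ by $(\mu_i)$ acts on the strings entering $\sigma$ from above by the block permutation $\alpha_1((\mu_i))\in S_{p+j}$, and on those leaving $\tau$ from below by $\alpha_2((\mu_i))\in S_{q+j}$, which are absorbed into $\sigma$ and $\tau$ respectively. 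Rule (1) reflects the trace-like behavior of $ev^j$: since the last $j$ outputs of $\sigma$ are paired with the last $j$ inputs of $\tau$ by the $j$ contractions, a simultaneous relabeling of the two ends of these loops by $\mu\in S_j$ is invisible to the underlying linear map. Rule (2) packages the identity reduction relation: if the output of the last $\Id_W$ box sits on the $j$-th loop but its input does not (the condition $\sigma(p+j)=p+j$, $\tau(q+j)\neq q+j$), then the box together with that loop can be collapsed, decreasing both $m$ and $j$ by one and rerouting $\tau$ by the transposition $(\tau(q+j),q+j)$; the mirror case is symmetric.

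For completeness, I would produce a canonical representative within each equivalence class in three successive steps. First, apply rule (3) to fix the order of the boxes inside $\overline{Di}$ as $x_1^{\star n_1}\star\cdots\star x_r^{\star n_r}\star \Id_W^{\star m}$, so the multiplicities $(n_i,m)$ become the only box-content invariants visible at the level of $\overline{Di}$. Second, iterate rule (2), combined with rule (1) to cyclically shift any chosen loop into the $j$-th position, until no $\Id_W$ box has a loop end; at this stage $m$ counts exactly the identity strings that connect a free input directly to a free output, and both $m$ and $j$ become intrinsic to the diagram. Third, with the box content and the set of loops fixed, the remaining freedom in $(\sigma,\tau)$ is precisely the relabeling of the $j$ loops, captured by rule (1). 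Any two normal-form tuples giving equivalent diagrams can thus be transformed into one another by the listed rules.

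The main obstacle is the second step: one must verify that every instance of the general identity reduction relation of Definition \ref{def:eqDi}, which a priori allows an $\Id_W$ box to sit anywhere in the diagram and contract either of its strings with any other string, is generated by rule (2) as written after conjugation by rule (1) and reordering by rule (3). A related subtlety is the coherence requirement that the ``dimension bubble'' arising from contracting an $\Id_W$ with itself --- the case $\sigma(p+j)=p+j$ and $\tau(q+j)=q+j$ simultaneously --- be correctly \emph{excluded}, since such a bubble evaluates to $\dim_K W$ and therefore depends genuinely on $W$. This is precisely why each branch of rule (2) carries the strict inequality stated, and keeping this accounting straight throughout the reduction procedure is the most delicate part of the argument.
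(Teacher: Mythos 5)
Your identification of the three rules with the generators of equivalence is exactly the paper's one-line proof: rule (1) is the pre-lemma observation about relabeling the $j$ contracted strings, rule (2) is the identity-reduction relation, and rule (3) is the permutations relation of Definition \ref{def:eqDi} restricted to within-block box permutations. The paper stops at that identification; your normal-form sketch for completeness is a sensible elaboration, but two bookkeeping points should be tightened. The stopping condition in your step 2 (``until no $\Id_W$ box has a loop end'') cannot be reached when a dimension bubble is present, since a bubble has both of its ends on a loop yet rule (2) correctly refuses to fire; the right invariant is that every surviving $\Id_W$ box is either a pass-through or a bubble, so at the end of step 2 the number $m$ records pass-throughs \emph{plus} bubbles, not only identity strands joining a free input to a free output. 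Also, in step 3 the residual freedom in $(\sigma,\tau)$ is generated by rule (1) \emph{together with} rule (3), not rule (1) alone, since the $x_i$ boxes still admit within-block permutations after the $\Id_W$'s have been normalized. You already flag the bubble subtlety in your closing paragraph, so these are minor repairs rather than structural gaps.
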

\begin{proof}
The first part follows from the discussion before the lemma. The last two parts correspond to the two parts of Definition \ref{def:eqDi}.
\end{proof}
We finish this section with a formula for the product of two diagrams. 
Assume that we have two diagrams given by the data $c(j,\sigma,\tau,n_1,\ldots,n_r,m)$ and $c(j',\sigma',\tau',n_1',\ldots,n_r',m')$. Consider the unordered partitions $$\la_1 = (n_1p_1,n_2p_2,\ldots n_rp_r,m,n_1'p_1,n_2'p_2,\ldots,n_r'p_r,m')\text{ and }$$
$$\la_2 = (n_1q_1,n_2q_2,\ldots n_rq_r,m,n_1'q_1,n_2'q_2,\ldots,n_r'q_r,m').$$
Define $\mu\in S_{2r+2}$ by the formula $$\mu(a) = \begin{cases} \frac{a}{2} + r+1 \text{ if } a \text{ is even} \\
\frac{a+1}{2} \text{ if } a \text{ is odd}\end{cases}$$
Write $\mu_1 = \Omega^{\la_1}(\mu)$ and $\mu_2 = \Omega^{\la_2}(\mu)$. So $\mu_1\in S_P$ where $P= \sum (n_i+n_i')p_i + m+m'$ and $\mu_2\in S_{Q}$ where $Q= \sum_i(n_i+n_i')q_i + m+m'$. 
The following formula holds:
$$\xini\ot \Id_W^{\ot m}\ot x_1^{\ot n_1'}\ot\cdots\ot x_r^{\ot n_r'}\ot \Id_W^{\ot m'}= L^{(P)}_{\mu_1}(x_1^{\ot (n_1+n_1')}\ot\cdots\ot x_r^{\ot (n_r+n'_r)}\ot \Id_W^{\ot (m+m')})L^{(Q)}_{\mu_2^{-1}}$$
So by considering the linear maps which the diagrams represent we get that 
$$c(j,\sigma,\tau,n_1,\ldots,n_r,m)\star c(j',\sigma',\tau',n_1',\ldots,n_r',m')$$ represents the linear map 
$$ev^j(L^{(p+j)}_{\sigma}(\xini\ot \Id_W^{\ot m})L^{(q+j)}_{(\tau)})\ot ev^{j'}(L^{(p'+j')}_{\sigma}(x_1^{\ot n_1'}\ot\cdots\ot x_r^{\ot n_r'}\ot \Id_W^{\ot m'})L^{(q'+j')}_{(\tau')}) = $$
$$(ev^j\ot ev^{j'})(L^{(P)}_{(\sigma,\sigma')}L^{(P)}_{\mu_1}(x_1^{\ot (n_1+n_1')}\ot\cdots\ot x_r^{\ot (n_r+n_r')}\ot \Id_W^{\ot (m+m')})L^{(Q)}_{\mu_2^{-1}}L^{(Q)}_{(\tau,\tau')})$$
We next use the fact that for any $T\in W^{P,Q}$ it holds that
$$(ev^j\ot ev^{j'})(T) = ev^{j+j'}(L^{(P)}_{\mu_3^{-1}}TL^{(Q)}_{\mu_4})$$ where 
$\mu_3 = \Omega^{(p,j,p',j')}((23))\in S_P$ and $\mu_4 = \Omega^{(q,j,q',j')}((23))$.
Here $p=\sum_i n_ip_i + m-j$, $p'=\sum_in'_ip_i+m'-j'$, and $q$ and $q'$ are defined similarly.  
Substituting this into the previous equation we get the formula
$$c(j,\sigma,\tau,n_1,\ldots,n_r,m)\star c(j',\sigma',\tau',n_1',\ldots,n_r',m')= $$
\begin{equation}\label{eq:diagramproduct}c(j+j',\mu_3^{-1}(\sigma,\sigma')\mu_1,\mu_2^{-1}(\tau,\tau')\mu_4,n_1+n_1',\ldots, n_r+n_r',m+m')\end{equation}
 where $\mu_1,\mu_2,\mu_3,\mu_4$ are given as above.
It is much easier, though, to just describe the product of diagrams graphically by placing them one next to the other.  
\section{Second definition of $\Ainf$ and the algebra $\winf$}\label{sec:ainfwinf}
Let $Con^{p,q}$ be the free vector space generated by all the equivalence classes of diagrams of degree $(p,q)$. The $\star$-product equips the direct sum $Con=\bigoplus_{p,q}Con^{p,q}$ with a structure of an $\N^2$ graded algebra. This algebra will come into play in Section \ref{sec:axid} where we will study the ideal arising from axioms. We will concentrate now on the subalgebra of trivial degree.
\begin{definition} We write $\winf = Con^{0,0}$.
\end{definition}
The algebra $\winf$ is a subalgebra of $Con$. It is commutative, as can be seen by a direct calculation using Formula \ref{eq:diagramproduct} for the $\star$ product of two diagrams. The intuitive explanation for this is that if $Di_1$ and $Di_2$ are two closed diagrams then $Di_1\star Di_2$ can be obtained from $Di_2\star Di_1$ just by moving the boxes which appear in $Di_2$ to the right, as there are no free input or output strings that should be fixed. This is not true for general diagrams. 
The algebra $\winf$ has an extra grading arising from the number of boxes. 
\begin{definition}
We define the box-degree of $c(j,\sigma,\tau,n_1,\ldots,n_r,m)\in\winf$ to be $(n_1,\ldots,n_r)\in \N^r$.
\end{definition}
When we will concentrate on $\winf$ instead of $Con$, we will just refer to the box-degree as the degree.
We do not include the number $m$ in the grading, as it can vary between equivalent diagrams. Definition \ref{def:eqDi} shows that the box-degree notion is well defined.

\begin{definition}\label{def:basicinvariants} Assume that $n=\sum_ip_in_i + m = \sum_i q_in_i + m$. We write 
$$p(n,\sigma,n_1,\ldots,n_r,m) = c(n,\sigma,\Id,n_1,\ldots,n_r,m).$$ We say that the closed diagram $p(n,\sigma,n_1,\ldots,n_r,m)$ is a polynomial invariant of structures of type $((p_i,q_i))$. 
\end{definition}
Notice that indeed, every $p(n,\sigma,n_1,\ldots,n_r,m)$ can be evaluated on any structure of type $((p_i,q_i))$, no matter of what dimension, to give a scalar invariant. This scalar is $$\Tr(L_{\sigma}^{(n)}(\xini\ot \Id_W^{\ot m})).$$ In case $m=0$ and we restrict our attention to structures of a fixed dimension $d$ we get the basic polynomial invariants of Section \ref{sec:proofthm1}. We will think of the symbol $p(n,\sigma,n_1,\ldots,n_r)$ as representing both a diagram and a polynomial invariant. 

By taking only the elements $c(n,\sigma,\tau,n_1,\ldots,n_r,m)$ in which $\tau=\Id$ we impose no restriction. Indeed, by Lemma \ref{lem:eqDi} we know that for a general permutation $\tau\in S_n$ we have $c(n,\sigma,\tau,n_1,\ldots,n_r,m) = c(n,\tau\sigma,\Id,n_1,\ldots,n_r,m) = p(n,\tau\sigma,n_1,\ldots,n_r,m)$.

\begin{definition} An invariant $p(n,\sigma,n_1,\ldots,n_r,m)$ is called connected, or irreducible, if the corresponding diagram is connected. Equivalently- an invariant is connected if it cannot be written non-trivially as the product of two invariants. 
\end{definition}

\begin{definition}\label{def:augmented} We define the dimension invariant as $p(1,\Id,0,0,\ldots,0,1)$. We denote it by $D$. We define an invariant to be unaugmented if it is equal to an invariant of the form $p(n,\sigma,n_1,\ldots,n_r,0)$. Otherwise it is augmented.
\end{definition}
The dimension invariant is represented by the following diagram:
\begin{equation}\label{fig:identity}
\begin{tikzpicture}
	\begin{pgfonlayer}{nodelayer}
		\node [style=1function] (0) at (3, 1.25) {$\Id_W$};
		\node [style=none] (1) at (3, 0.75) {};
		\node [style=none] (2) at (3, 0.25) {};
		\node [style=none] (3) at (3, 1.75) {};
		\node [style=none] (4) at (3, 2.25) {};
		\node [style=none] (5) at (4, 2.25) {};
		\node [style=none] (6) at (4, 0.25) {};
	\end{pgfonlayer}
	\begin{pgfonlayer}{edgelayer}
		\draw (4.center) to (3.center);
		\draw [bend left=90, looseness=1.50] (4.center) to (5.center);
		\draw (5.center) to (6.center);
		\draw [bend left=90, looseness=1.50] (6.center) to (2.center);
		\draw (2.center) to (1.center);
	\end{pgfonlayer}
\end{tikzpicture}
\end{equation}
We call this invariant the dimension as it represents the trace of the identity map of $W$, which is indeed the dimension of $W$. 
\begin{proposition}
Every invariant in $\winf$ can be written uniquely (up to reordering) as a product of connected invariants. The only augmented connected invariant is $D$.
\end{proposition}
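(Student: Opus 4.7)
My plan is to show that $\winf$ is a polynomial algebra whose generators are the equivalence classes of connected closed diagrams. Since $\winf$ has as a basis the equivalence classes of closed diagrams and the $\star$-product of basis elements corresponds to disjoint union of representatives, it suffices to check that the commutative monoid of equivalence classes of closed diagrams under disjoint union is freely generated by the equivalence classes of connected closed diagrams. This splits into (i) existence of a connected factorization and (ii) well-definedness of the multiset of equivalence classes of connected components under the equivalence relation. I then classify the augmented connected invariants.

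For existence, given any closed diagram $Di$, partition its boxes and strings into topological connected components $Di_1, \ldots, Di_k$. Each $Di_j$ is itself a closed diagram, since no string of $Di$ is free and no string bridges different components. Hence $[Di] = [Di_1]\star\cdots\star[Di_k]$ in $\winf$, and each $[Di_j]$ is connected by construction.

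For uniqueness, I verify that both generating moves of Definition~\ref{def:eqDi} preserve the multiset of equivalence classes of connected components. The permutation move merely relabels the spatial positions of boxes without touching the string structure, so it leaves the component partition literally unchanged. For identity reduction in a closed diagram, a box labelled $\Id_W$ has its unique input attached to the output of some box $A$ and its unique output attached to the input of some box $B$ (possibly $A=B$), and the move replaces the $\Id_W$ together with its two strings by a single direct string from $A$ to $B$. Before the move, the boxes $A$, $B$, and $\Id_W$ all lie in one component; after the move, $A$ and $B$ remain connected and the other components are untouched. Hence the move acts inside a single component, replacing it by an equivalent one, and the multiset of equivalence classes of connected components is invariant. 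This shows the factorization into connected invariants is unique up to reordering.

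For the last statement, let $C$ be a connected augmented invariant, represented by a connected closed diagram containing at least one $\Id_W$-box. Suppose first that $C$ has no $x_i$-box. Then every box is an $\Id_W$ with one incoming and one outgoing string, so the underlying graph is a disjoint union of oriented cycles; by connectedness it is a single cycle of some length $\ell \geq 1$, and $\ell - 1$ identity reductions collapse it to the one-box loop, which by (\ref{fig:identity}) is the defining representative of $D$. Otherwise $C$ contains an $x_i$-box; following any $\Id_W$-box along its connections must reach a non-identity box after finitely many $\Id_W$-boxes (by connectedness), so the $\Id_W$ adjacent to this non-identity box can be removed via one application of identity reduction, strictly decreasing the number of $\Id_W$-boxes while preserving connectedness and the set of $x_i$-boxes. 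Iterating, $C$ is equivalent to a connected diagram with no $\Id_W$-box, i.e., unaugmented, contradicting our assumption. Thus the only augmented connected invariant is $D$. The main technical subtlety is the uniqueness step: verifying that every atomic equivalence move operates within a single component, so that the connected-component decomposition descends to equivalence classes.
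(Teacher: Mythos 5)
Your proof is correct and follows essentially the same route as the paper: decompose into topological connected components for existence and uniqueness, and iteratively strip $\Id_W$-boxes from a connected augmented diagram until either only $D$ remains or the diagram is shown to be unaugmented. Your treatment is somewhat more detailed than the paper's (you explicitly check that each generating move of the equivalence relation preserves the multiset of equivalence classes of components, and you split the augmented case by the presence of $x_i$-boxes, where the paper just observes that an $\Id_W$-box with input wired to its own output would be a separate $D$-component, impossible for a connected diagram $\neq D$), but the underlying ideas are the same.
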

\begin{proof}
The first part of the proposition follows by considering the connected components of the diagram of every invariant, as they are uniquely defined. Notice that reducing invariants using the identity-reduction move in Definition \ref{def:eqDi} does not change the number of irreducible components of the diagram.  
Let now $p(n,\sigma,n_1,\ldots,n_r,m)$ be a connected invariant. We will show that if this invariant is not $D$ and $m\neq 0$ then it is equivalent to an invariant with a smaller value of $m$. By induction this proves the result.
For this, consider the diagram of $p(n,\sigma,n_1,\ldots,n_r,m)$. Consider any of the $\Id_W$ boxes. If the input string and the output string of this box are connected to each other then we get a connected component of the diagram which looks like Figure \ref{fig:identity}. We assumed that the invariant is connected and different from $D$, so this is impossible. 
We can thus use the identity-reduction move of \ref{def:eqDi} to get rid of this $\Id_W$-box from the diagram, thus reducing the value of $m$ by one, as required.
\end{proof}
We recall now Definition \ref{def:seconddefkx}. 
\begin{definition}[The algebra $\Ainf$- second definition]
We write $\Ainf\subseteq \winf$ for the subalgebra spanned by the unaugmented invariants. This is a subalgebra indeed, as the product of two diagrams which do not contain any apperance of $\Id_W$ is again such a diagram. 
For unaugmented invariants we write $$p(n,\sigma,n_1,\ldots,n_r) := p(n,\sigma,n_1,\ldots,n_r,0)$$ where $\sum_i n_ip_i = \sum_i n_iq_i = n$. 
\end{definition}

\begin{lemma} The two definitions of $\Ainf$ are equivalent. 
\end{lemma}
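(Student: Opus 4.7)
The plan is to exhibit a natural algebra isomorphism between the two definitions. Write $A_1$ for the first (generators-and-relations) presentation and $A_2$ for the subalgebra of $\winf$ spanned by unaugmented closed diagrams. I would define a map $\phi\colon A_1\to A_2$ on generators by sending the symbol $p(n,\sigma,n_1,\ldots,n_r)$ to the equivalence class of the closed unaugmented diagram $c(n,\sigma,\Id,n_1,\ldots,n_r,0)$, and then verify that $\phi$ is a well-defined, surjective, and injective algebra homomorphism.

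To show well-definedness, I would check that the defining relations R0 and R1 of Theorem \ref{thm:mainstructure} hold in $A_2$. Relation R0 is the statement that the $\star$-product of two basic closed diagrams is again a basic closed diagram; this is immediate from the product formula \eqref{eq:diagramproduct} together with Proposition \ref{prop:ainfform}, noting that the disjoint union of two unaugmented diagrams remains unaugmented. Relation R1 is the case $\tau=\Id$, $m=0$ of Lemma \ref{lem:eqDi}(3): specialising there gives $c(n,\sigma\alpha_1(\mu),\Id,\ldots) = c(n,\sigma,\alpha_2(\mu),\ldots)$, and one then uses Lemma \ref{lem:eqDi}(1) (with $\iota_1,\iota_2$ equal to the identity, since closed diagrams have $p=q=0$) to return the right-hand side to $\tau=\Id$ form, obtaining exactly R1 after the substitution $\sigma\mapsto\alpha_1(\mu)^{-1}\sigma$. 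Surjectivity is similarly easy: any unaugmented closed diagram is represented by some $c(n,\sigma,\tau,n_1,\ldots,n_r,0)$, and Lemma \ref{lem:eqDi}(1) converts this to $c(n,\tau\sigma,\Id,n_1,\ldots,n_r,0) = \phi(p(n,\tau\sigma,n_1,\ldots,n_r))$.

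The core of the argument is injectivity. Iterating R0 shows that every element of $A_1$ can be written as a linear combination of basic symbols, so injectivity reduces to proving that every linear relation among the diagrams $c(n,\sigma,\Id,n_1,\ldots,n_r,0)$ in $A_2$ already follows from R1. This is where Lemma \ref{lem:eqDi} carries the full weight: for unaugmented closed diagrams one has $m=0$ and $p=q=0$, so part (2) of the lemma is vacuous, part (1) has been absorbed into the normalisation $\tau=\Id$, and part (3) generates precisely R1. The principal obstacle is exactly the completeness of Lemma \ref{lem:eqDi}---that its three clauses exhaust all equivalences of closed string diagrams---but this has been established in the preceding section, and with it in hand the three steps above combine to give $\phi$ as an isomorphism of graded commutative algebras.
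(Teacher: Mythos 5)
Your proposal is correct and takes essentially the same approach as the paper's own (very terse) proof: the paper also defines the tautological map, invokes Lemma \ref{lem:eqDi} together with relation R1 of Proposition \ref{prop:invpolys} to justify that the equivalence classes match, and notes that the $\star$-product of closed diagrams corresponds to scalar multiplication. You have simply made explicit the surjectivity and injectivity checks that the paper leaves implicit; the one minor slip is that the relevant substitution should act by right-multiplication, $\sigma\mapsto\sigma\,\alpha_1(\mu)^{-1}$, rather than by left-multiplication, but this does not affect the correctness of the argument.
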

\begin{proof}
The isomorphism between the two algebras is the tautological one, given by sending $p(n,\sigma,n_1,\ldots,n_r)$ to $p(n,\sigma,n_1,\ldots,n_r)$ and extending by linearity. By the equivalence relation on diagrams from Lemma \ref{lem:eqDi} and by Relation $R1$ in Proposition \ref{prop:invpolys} we get that this is well defined. This isomorphism preserves the multiplication indeed, since taking the product of diagrams corresponds to taking the tensor product of the linear maps they represent. In case of maps $K\to K$, this corresponds to just multiplication of scalars, which is the product in the first definition of $\Ainf$. 
\end{proof}

The discussion about augmented and unaugmented invariants has the following corollary:
\begin{corollary}\label{cor:ainfwinf}
The algebra $\Ainf$ is a polynomial algebra. Its set of variables are all the unaugmented connected invariants. The algebra $\winf$ is isomorphic to $\Ainf\ot K[D]$. 
\end{corollary}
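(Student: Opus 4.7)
The plan is to derive both statements directly from the preceding proposition, which already established that every element of $\winf$ factors uniquely (up to reordering) as a $\star$-product of connected invariants, with $D$ being the only augmented connected invariant.

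First I would handle the polynomial structure of $\Ainf$. An unaugmented closed diagram is, by definition, one with no $\Id_W$-boxes; equivalently, since every $\Id_W$-box in a connected component forces that component to be augmented, it is a diagram all of whose connected components are unaugmented. Let $\mathcal{B}$ denote the set of equivalence classes of unaugmented connected closed diagrams. The $\star$-product of a multiset of elements of $\mathcal{B}$ yields an unaugmented closed diagram, and the proposition guarantees that this assignment is a bijection between finite multisets over $\mathcal{B}$ and equivalence classes of unaugmented closed diagrams. Since the latter form the natural basis of $\Ainf$ (from Definition \ref{def:seconddefkx}), the set of monomials in $\mathcal{B}$ is a $K$-basis of $\Ainf$, which is precisely the statement that $\Ainf$ is the polynomial algebra $K[\mathcal{B}]$.

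Next I would establish the isomorphism $\winf \cong \Ainf \otimes K[D]$. Define the algebra homomorphism
\[
\phi:\Ainf\otimes K[D]\longrightarrow \winf,\qquad \phi(a\otimes D^k)=a\star D^{\star k}.
\]
For surjectivity, any closed diagram factors by the proposition as a $\star$-product of connected invariants; grouping the $D$-components together gives a presentation $u\star D^{\star k}$ with $u\in\Ainf$, hence lies in the image of $\phi$. For injectivity, a $K$-basis of $\Ainf\otimes K[D]$ is given by the elements $b\otimes D^k$ with $b$ a monomial in $\mathcal{B}$ and $k\geq 0$; their images $b\star D^{\star k}$ are pairwise distinct closed diagrams, because the uniqueness of the connected decomposition lets us simultaneously recover the multiset of unaugmented connected components (hence $b$) and the number of $D$-components (hence $k$).

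The only real subtlety, and what I would expect to be the main obstacle if one tried to prove the corollary from scratch, is checking that the identity-reduction move of Definition \ref{def:eqDi} cannot secretly merge a $D$-component with some unaugmented component or split one in an uncontrolled way. This is however already absorbed into the preceding proposition: its proof shows that identity reduction never alters the number of connected components, and that any $\Id_W$-box inside a connected component different from $D$ can be cancelled, so the value of $k$ in the decomposition $u\star D^{\star k}$ is an invariant of the equivalence class. Once this uniqueness is in place, both halves of the corollary follow formally from the bijection between multisets over $\mathcal{B}\sqcup\{D\}$ and equivalence classes of closed diagrams.
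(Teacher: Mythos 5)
Your proof is correct and follows essentially the same route as the paper, which states the corollary without explicit proof as an immediate consequence of the preceding proposition (unique factorization into connected diagrams, with $D$ the only augmented connected one). Your elaboration — the bijection between multisets over the unaugmented connected diagrams and unaugmented closed diagrams, the explicit homomorphism $\phi(a\otimes D^k)=a\star D^{\star k}$, and the observation that identity-reduction preserves the count of connected components so that $k$ is an invariant of the equivalence class — is exactly the content the paper leaves implicit. One minor imprecision: you write that ``every $\Id_W$-box in a connected component forces that component to be augmented''; in fact a connected component may contain $\Id_W$-boxes and still be unaugmented (equivalent to a diagram with none) — the proposition shows such boxes can always be reduced away unless the component is $D$ itself. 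This does not affect your argument, since what you actually use is the correct conclusion that an unaugmented closed diagram has no $D$-component, which follows from uniqueness of the connected decomposition.
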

We will think of both algebras as algebras of invariants of structures of type $((p_i,q_i))_i$. The only difference is that $\Ainf$ ``cannot see'' the dimension of $W$, while $\winf$ can. 

We summarize the structure of $\Ainf$ in the following proposition. All of the claims follow from the previous claims about the elements $c(j,\sigma,\tau,n_1,\ldots,n_r,m)$. 
\begin{proposition}\label{prop:ainfform}
The algebra $\Ainf$ is a polynomial algebra on the connected unaugmented polynomial invariants, and has a basis given by all unaugmented polynomial invariants. 
The multiplication in $\Ainf$ is given by the formula 
$$p(n,\sigma,n_1,\ldots,n_r)\star p(n',\sigma',n_1',\ldots,n_r') = p(n+n',\mu_2^{-1}(\sigma,\sigma')\mu_1,n_1+n_1',\ldots,n_r+n_r')$$ where $\mu_1$ and $\mu_2$ are as defined in the last part of Section \ref{sec:diagrams} (the permutations $\mu_3$ and $\mu_4$ are trivial here).
Two polynomial invariants $p(n,\sigma,n_1,\ldots,n_r)$ and $p(n',\sigma',n_1',\ldots,n_r')$ are equal if and only if $n=n', n_1=n_1',\ldots, n_r=n_r'$ and $$\sigma = \alpha^{(q_i)}_{(n_i)}((\mu_i))^{-1}\sigma'\alpha^{(p_i)}_{(n_i)}((\mu_i))$$ for some $(\mu_i)\in S_{n_1,\ldots,n_r}$, where $\alpha^{(q_i)}_{(n_i)}$ and $\alpha^{(p_i)}_{(n_i)}$ are given in Definition \ref{def:alpha}. 
\end{proposition}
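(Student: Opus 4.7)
The three assertions of the proposition are largely a reassembly of results already established in the paper, so the proof will be organized as three independent observations rather than a single long argument. The first assertion — that $\Ainf$ is a polynomial algebra on the connected unaugmented invariants and that the unaugmented invariants form a basis — I would simply deduce from Corollary \ref{cor:ainfwinf}, since a closed diagram decomposes uniquely into connected components and the $\star$-product corresponds to disjoint union. Under the identification $\winf\cong \Ainf\otimes K[D]$, the unaugmented invariants are precisely the monomials in connected unaugmented invariants, which gives both the polynomial structure and the claimed basis.

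\textbf{Multiplication formula.} For the second assertion I would specialize equation \eqref{eq:diagramproduct} to the closed, unaugmented case: $\tau=\tau'=\Id$, $m=m'=0$, $j=n$, $j'=n'$. In the notation of Section \ref{sec:diagrams} the numbers of free output strings satisfy $p=\sum_i n_ip_i+m-j=0$ and $p'=0$, and similarly $q=q'=0$. Consequently the permutations $\mu_3=\Omega^{(p,j,p',j')}((23))$ and $\mu_4=\Omega^{(q,j,q',j')}((23))$ are controlled by unordered partitions whose first and third blocks are empty; Definition \ref{def:Omega} then forces both to be the identity, since the transposition $(23)$ in $S_4$ only moves blocks of length zero. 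The general formula \eqref{eq:diagramproduct} thereby collapses to
\[
p(n,\sigma,n_1,\ldots,n_r)\star p(n',\sigma',n_1',\ldots,n_r')=p(n+n',\mu_2^{-1}(\sigma,\sigma')\mu_1,n_1+n_1',\ldots,n_r+n_r'),
\]
with $\mu_1,\mu_2$ as defined at the end of Section \ref{sec:diagrams}.

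\textbf{Equality of basic invariants.} For the final assertion I would first invoke the $\N^r$-grading by box-degree: since $p(n,\sigma,n_1,\ldots,n_r)\in(\Ainf)_{n_1,\ldots,n_r}$, agreement of two basic invariants forces $n_i=n_i'$ for all $i$, hence also $n=n'$. By Definition \ref{def:firstdefkx} the only linear relations among basic invariants in $\Ainf$ are those of type R1, namely $p(n,\sigma,n_1,\ldots,n_r)=p(n,\alpha^{(q_i)}_{(n_i)}(\sigma_i)\,\sigma\,\alpha^{(p_i)}_{(n_i)}(\sigma_i^{-1}),n_1,\ldots,n_r)$ for every $(\sigma_i)\in S_{n_1,\ldots,n_r}$. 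Taking $\sigma_i=\mu_i^{-1}$ converts this into the stated identity $\sigma=\alpha^{(q_i)}_{(n_i)}((\mu_i))^{-1}\sigma'\alpha^{(p_i)}_{(n_i)}((\mu_i))$, giving one direction, and conversely any equality in $\Ainf$ must be generated by R1-moves. As a sanity check in the diagrammatic model, in Lemma \ref{lem:eqDi} rule (2) is vacuous when $m=0$, rule (1) reduces to a trivial conjugation once $\tau=\tau'=\Id$ has been imposed, and rule (3) specializes to precisely R1. The only real obstacle I foresee is the bookkeeping needed to confirm that $\mu_3$ and $\mu_4$ really are trivial in the closed case, but once one carefully unpacks $\Omega^{\lambda}$ for a partition with two empty blocks this is immediate, and the rest of the proof is then routine.
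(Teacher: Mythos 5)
Your proposal is correct and follows exactly the route the paper has in mind: the paper's own ``proof'' is one sentence deferring to the prior analysis of the $c(j,\sigma,\tau,n_1,\ldots,n_r,m)$, and you supply precisely the specialization (closed, unaugmented, $\tau=\tau'=\Id$, $m=m'=0$, $j=n$, $j'=n'$) that makes $\mu_3,\mu_4$ trivial and collapses \eqref{eq:diagramproduct} to the stated product, together with the grading argument and the identification of equality with the $R1$-orbit relation. One small slip: to pass from $R1$ written as $\sigma'=\alpha^{(q_i)}_{(n_i)}(\sigma_i)\sigma\alpha^{(p_i)}_{(n_i)}(\sigma_i^{-1})$ to the form $\sigma=\alpha^{(q_i)}_{(n_i)}((\mu_i))^{-1}\sigma'\alpha^{(p_i)}_{(n_i)}((\mu_i))$, the correct substitution is $\mu_i=\sigma_i$, not $\sigma_i=\mu_i^{-1}$ (using that $\alpha^{(\cdot)}_{(n_i)}$ is a homomorphism, so $\alpha(\sigma_i)^{-1}=\alpha(\sigma_i^{-1})$); this is immaterial since both sides quantify over all of $S_{n_1,\ldots,n_r}$, but worth fixing for clarity.
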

The above proposition gives an algebraic formula for the multiplication of two basic invariants. It is, however, easier to think of this multiplication as taking disjoint union of diagrams.

\section{The bialgebra structures on $\Ainf$}\label{sec:bialg}
The fact that elements of $\Ainf$ are invariant polynomials which can be evaluated on structures of all dimensions enriches the structure of $\Ainf$. We exhibit here two bialgebra structures on $\Ainf$, one coming from forming the direct sum of structures, the other from forming the tensor product of structures. We begin with the following definition:
\begin{definition} Let $(W_1,(x_i)), (W_2,(y_i))$ be two algebraic structures of type $((p_i,q_i))$. The direct sum of these structures is the structure $(W_1\oplus W_2,(x_i\oplus y_i))$ where $x_i\oplus y_i\in W_1^{p_i,q_i}\oplus W_2^{p_i,q_i}\subseteq (W_1\oplus W_2)^{p_i,q_i}$. The tensor product of the structures is the structure $(W_1\ot W_2,(x_i\ot y_i))$, where $x_i\ot y_i\in W_1^{p_i,q_i}\ot W_2^{p_i,q_i}\cong (W_1\ot W_2)^{p_i,q_i}$.
\end{definition}
If the structure we are considering is that of associative algebras then the direct sum and tensor product we defined here are the usual direct sum and tensor product of algebras. 

If $(W_1,(x_i))$ and $(W_2,(y_i))$ are two structures with closed orbit, then we can consider the unique closed orbits in the closures of the orbits of $(W_1\oplus W_2, (x_i\oplus y_i))$ and of $(W_1,\ot W_2,(x_i\ot y_i))$. This induces operations $\oplus:X\times X\to X$ and $\ot:X\times X\to X$. The following lemma holds:
\begin{lemma}\label{lem:products}
The operations $\oplus$ and $\ot$ define associative operations $\oplus,\ot:X\times X\to X$.
Both operations have a neutral element. The neutral element for $\oplus$ is the zero-dimensional structure with the zero tensors. The neutral element for $\ot$ is the structure $(K,(x_i))$ where $x_i=1\in K^{p,q}\cong K$. We denote this structure by $(K,(1))$. 
\end{lemma}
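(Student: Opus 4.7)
The plan is to establish all properties first at the level of structures and then pass to $X$ via the map $[V] \mapsto$ unique closed orbit in $\overline{\GL(W) \cdot V}$. First I would verify that $\oplus$ and $\otimes$ descend to well-defined operations on $X$: the direct sum map $\oplus \colon U(W_1) \times U(W_2) \to U(W_1 \oplus W_2)$ is linear (hence a morphism of affine varieties) and equivariant with respect to the inclusion $\GL(W_1) \times \GL(W_2) \hookrightarrow \GL(W_1 \oplus W_2)$. Consequently, if $V_i' \in \overline{\GL(W_i) \cdot V_i}$ for $i = 1, 2$, then $V_1' \oplus V_2'$ lies in $\overline{\GL(W_1 \oplus W_2) \cdot (V_1 \oplus V_2)}$; by symmetry the two orbit closures coincide, so the unique closed orbit in either is the same. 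The analogous argument for $\otimes$ uses $(g_1, g_2) \mapsto g_1 \otimes g_2$ together with the identification $W_1^{p_i,q_i} \otimes W_2^{p_i,q_i} \cong (W_1 \otimes W_2)^{p_i,q_i}$.

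Associativity then follows because at the level of structures the canonical identifications $(W_1 \oplus W_2) \oplus W_3 \cong W_1 \oplus (W_2 \oplus W_3)$ and $(W_1 \otimes W_2) \otimes W_3 \cong W_1 \otimes (W_2 \otimes W_3)$ intertwine $(x \oplus y) \oplus z \leftrightarrow x \oplus (y \oplus z)$ and likewise for $\otimes$. Both bracketings therefore yield structures lying in the same $\GL$-orbit, hence the same class in $X$, and this combines with the previous step to give $([V_1] \oplus [V_2]) \oplus [V_3] = [V_1] \oplus ([V_2] \oplus [V_3])$ (and similarly for $\otimes$).

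For the neutral elements, $W \oplus 0 = W$ with $x_i \oplus 0 = x_i$, and the canonical isomorphism $W \otimes K \cong W$ sends $x_i \otimes 1$ to $x_i$ under $W^{p_i,q_i} \otimes K^{p_i,q_i} \cong W^{p_i,q_i}$ (since $K^{p_i,q_i} \cong K$, with $1$ the canonical element). Both proposed neutral elements have singleton (hence closed) $\GL$-orbits, so no closure step is required. The main obstacle is the well-definedness step: one must know that applying $\oplus$ or $\otimes$ to points in orbit closures lands in the orbit closure of the direct sum or tensor product, which is precisely the variety-theoretic input used in the first paragraph; once this is granted, associativity and the neutrality identities are essentially formal consequences of the corresponding canonical isomorphisms of vector spaces.
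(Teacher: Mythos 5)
The paper states this lemma without a proof, so there is no official argument to compare against; your strategy---descend $\oplus$ and $\otimes$ to $X$ via equivariance of the direct sum and tensor product maps, then verify associativity and neutrality through the canonical vector-space isomorphisms---is the natural one. Two specific steps need repair.

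First, the ``by symmetry'' claim in your opening paragraph does not hold: if $V_i'\in\overline{\GL(W_i)\cdot V_i}$ but $V_i'$ is not in the actual \emph{orbit} of $V_i$, then $\overline{\GL(W_1\oplus W_2)\cdot(V_1'\oplus V_2')}$ is in general a proper subset of $\overline{\GL(W_1\oplus W_2)\cdot(V_1\oplus V_2)}$, not equal to it. What equivariance of the linear map $\oplus$ actually gives is only the inclusion $\overline{\GL\cdot(V_1'\oplus V_2')}\subseteq\overline{\GL\cdot(V_1\oplus V_2)}$; since each of these closed $\GL$-stable sets contains a unique closed orbit, the closed orbit in the smaller set is a closed orbit in the larger and therefore coincides with its unique closed orbit. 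That containment argument, not symmetry, is what yields well-definedness, and the same observation is the engine behind associativity (apply it with $V_1'$ a representative of the closed orbit in $\overline{\GL\cdot(V_1\oplus V_2)}$ and $V_2'=V_3$).

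Second, the assertion that $(K,(1))$ has a singleton orbit is false unless $p_i=q_i$ for every $i$. The action of $\lambda\in\GL(K)=K^\times$ on $K^{p_i,q_i}\cong K$ is multiplication by $\lambda^{p_i-q_i}$, so the orbit of $(1,\ldots,1)$ is $\{(\lambda^{p_1-q_1},\ldots,\lambda^{p_r-q_r}):\lambda\in K^\times\}$, which is positive-dimensional (and may even fail to be Zariski closed) as soon as some $p_i\neq q_i$. Neutrality of $[(K,(1))]$ for $\otimes$ still holds, but for a different reason: tensoring a structure $(W,(x_i))$ having closed orbit with any $(y_i)\in\overline{\GL(K)\cdot(1,\ldots,1)}$ produces $(W,(y_i x_i))$, and since the scaling $(x_i)\mapsto(\lambda^{p_i-q_i}x_i)$ is realized by the central element $\lambda\Id_W\in\GL(W)$, the point $(y_ix_i)$ lies in $\overline{\GL(W)\cdot(x_i)}=\GL(W)\cdot(x_i)$. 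Hence $[V]\otimes[(K,(1))]=[V]$, but not because the orbit of $(K,(1))$ is a point; you need the central one-parameter subgroup of $\GL(W)$ to close the gap.
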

We would like to show that these operations define two bialgebra structures on $\Ainf$. Notice that for $m=\oplus$ or $\ot$ we have the following diagram:
$$\xymatrix{\Ainf\ar[r]^{\Xi} & K^X \ar[d]^{m^*}\\
 & K^{X\times X} \\ 
\Ainf\ot \Ainf\ar[r]^{\Xi\ot\Xi} & K^X\ot K^X\ar[u]^{\beta}}
$$
where $\beta(f\ot g)(x,y) = f(x)g(y)$.
Since $\beta(\Xi\ot\Xi)$ is an injective map it will be enough to show that the image of $m^*\Xi$ is contained in the image of $\Ainf\ot\Ainf$ under $\beta(\Xi\ot\Xi)$. This will imply that we have a map $\Ainf\to \Ainf\ot\Ainf$ which makes the diagram above commutative. The multiplicativity of the resulting map $\Ainf\to\Ainf\ot\Ainf$ follows from the multiplicativity of all the involved maps. The coassicativity follows from the associativity of $m$. 

We begin with $m=\ot$. For this, we calculate:
$$\ot^*\Xi(p(n,\sigma,n_1,\ldots,n_r))((W_1,(x_i)),(W_2,(y_i))) = $$
$$\Tr_{W_1^{\ot n}\ot W_2^{\ot n}}(L_{\sigma}^{(n)}(x_1\ot y_1)^{\ot n_1}\ot\cdots\ot (x_r\ot y_r)^{\ot n_r}) = $$
$$\Tr_{W_1^{\ot n}}(L^{(n)}_{\sigma}\xini)\cdot 
\Tr_{W_2^{\ot n}}(L^{(n)}_{\sigma}y_1^{\ot n_1}\ot\cdots\ot y_r^{\ot n_r}) = $$
$$p(n,\sigma,n_1,\ldots,n_r)((W_1,(x_i)))p(n,\sigma,n_1,\ldots,n_r)((W_2,(y_i)))=$$
$$\beta(\Xi\ot\Xi)(p(n,\sigma,n_1,\ldots n_r)\ot p(n,\sigma,n_1,\ldots n_r)).$$
This shows that $\ot$ defines a bialgebra strcuture which we denote by $$\Delot:\Ainf\to \Ainf\ot\Ainf.$$ Moreover, it shows us that with respect to this coproduct all the basic invariants are group-like elements (i.e. they satisfy the equation $\Delot(g) = g\ot g$). This shows that $\Delot$ defines on $\Ainf$ a bialgebra structure which is not a Hopf algebra structure, assuming that $\Ainf$ is not trivial. Indeed, if $\Ainf$ had an antipode $S$ with respect to $\Delot$ then the antipode equation would have implied that $S(g)=g^{-1}$ for every group-like element. Since $\Ainf$ is graded and all the basic invariants are of positive degree this is impossible. The counit $\epsilon_{\ot}$ for this bialgebra structure is given by evaluation on the one dimensional structure $(K,(1))$ from Lemma \ref{lem:products}. We have 
$$\epsilon_{\ot}(p(n,\sigma,n_1,\ldots,n_r))=1$$ for every basic invariant. 

Consider now the direct sum operation. We will show that it defines another bialgebra structure on $\Ainf$. We begin with the following lemma:
\begin{lemma}
Let $p(n,\sigma,n_1,\ldots,n_r)$ be a basic invariant, where $n=\sum_i p_in_i = \sum_i q_in_i$ and $\sigma\in S_n$.
The diagram of this basic invariant contains $n_1+n_2+\cdots + n_r$ boxes and $n$ strings connecting them.
Number these strings by $\{1,\ldots,n\}$. Denote by $I_i$ the set of input strings for the $i$-th box and by $J_i$ the set of output strings of the $i$-th box. 
Then $p(n,\sigma,n_1,\ldots,n_r)$ is irreducible if and only if the following condition holds: 
there is no proper partition $\{1,\ldots,n\} = X\sqcup Y$ such that for every $i$ $I_i\cup \sigma(J_i)\subseteq X$ or $I_i\cup \sigma(J_i)\subseteq Y$.
\end{lemma}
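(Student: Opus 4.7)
The plan is to translate irreducibility into the connectedness of a multigraph. By Proposition~\ref{prop:ainfform} and the discussion preceding it, the product in $\Ainf$ corresponds to disjoint union of diagrams and every diagram decomposes uniquely into its connected components; hence $p(n,\sigma,n_1,\ldots,n_r)$ is irreducible if and only if its underlying diagram is connected as a topological object. So the whole task is to characterize topological connectedness of the diagram combinatorially in terms of $\sigma$ and the families $I_i,J_i$.

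Next, I would realize the diagram as an undirected multigraph $G$ whose vertices are the $N=n_1+\cdots+n_r$ boxes and whose edges are the $n$ strings, each edge joining the box at its output end to the box at its input end. Labeling strings by their input position in $\{1,\ldots,n\}$, a string is incident to box~$i$ in two ways: either its input end lies at box~$i$, in which case its label lies in $I_i$, or its output end lies at box~$i$, in which case its output position belongs to $J_i$ and its input-position label equals $\sigma$ applied to that output position, hence lies in $\sigma(J_i)$. Consequently the set of edges of $G$ incident to vertex~$i$ is exactly $I_i\cup\sigma(J_i)$.

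With this identification, the claim reduces to the elementary graph-theoretic fact that a multigraph is disconnected if and only if its edge set admits a proper partition $X\sqcup Y$ such that no vertex is incident to edges in both parts. In one direction a disconnection of the vertex set immediately produces such an edge partition; conversely such an edge partition induces a partition of the boxes according to which part contains their incident edges, and properness transfers since each box has at least one incident edge in the closed-diagram setting. The main obstacle I expect is purely bookkeeping: one has to be careful that the conventions fixed in Section~\ref{sec:diagrams} genuinely identify the incidence set at box~$i$ with $I_i\cup\sigma(J_i)$ rather than some variant such as $I_i\cup\sigma^{-1}(J_i)$; once that is pinned down, the translation between topological and combinatorial connectedness is immediate.
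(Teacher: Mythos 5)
Your proof is correct and follows essentially the same approach as the paper's: both identify irreducibility with connectedness of the diagram regarded as a (multi)graph on the boxes, and observe that the combinatorial partition condition is exactly disconnectedness of that graph, the paper presenting this more tersely while you spell out the multigraph model and verify the incidence identification $I_i\cup\sigma(J_i)$ explicitly.
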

\begin{proof}
If the diagram of $p(n,\sigma,n_1,\ldots,n_r)$ is not connected, write $\{1,\ldots,n\} = X\sqcup Y$ where $X$ contains the numbers of the input strings of one component, and $Y=\{1,\ldots,n\}\backslash X$. This will give us the desired decomposition.
In the other direction, if we have such a partition, we get a non-trivial decomposition of the diagram of $p(n,\sigma,n_1,\ldots,n_r)$ into different connected components. 
\end{proof}

 
Since the irreducible basic invariants generate $\Ainf$ it will be enough to prove that every irreducible basic invariant $p(n,\sigma,n_1,\ldots, n_r)$ has image inside $\beta(\Xi\ot\Xi)(\Ainf\ot\Ainf)$. We will show that in fact all the irreducible basic invariants are primitive with respect to $\Delta$. 

To do so, we consider two structures
$(W_1,(x_i))$ and $(W_2,(y_i))$ and their direct sum 
$(W,(z_i))=(W_1\oplus W_2,(x_i\oplus y_i))$. 
We write $P_1:W\to W_1\to W$ and $P_2:W\to W_2\to W$ for the natural projections followed by the natural embeddings. 
We calculate: 
$$\oplus^*\Xi(p(n,\sigma,n_1,\ldots n_r))((W_1,(x_i)),(W_2,(y_i))) = 
p(n,\sigma,n_1,\ldots, n_r)(W,(z_i)) = $$
$$Tr_{W^{\ot n}}(L^{(n)}_{\sigma}(z_1^{\ot n_1}\ot \cdots\ot z_r^{\ot n_r}))=$$
$$\sum_{j_1,\ldots,j_n=1}^{2} Tr_{W^{\ot n}}(L^{(n)}_{\sigma}(z_1^{\ot n_1}\ot \cdots\ot z_r^{\ot n_r})(P_{j_1}\ot P_{j_2}\ot\cdots\ot P_{j_n})).$$
We claim the following:
\begin{lemma}
If $(j_1,\ldots, j_n)\neq (1,1,\ldots,1)$ or $(2,2,\ldots,2)$ then $
Tr_{W^{\ot n}}(L^{(n)}_{\sigma}(z_1^{\ot n_1}\ot \cdots\ot z_r^{\ot n_r})(p_{j_1}\ot p_{j_2}\ot\cdots\ot p_{j_n}))=0$. 
\end{lemma}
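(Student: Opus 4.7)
The plan is a coloring/connectedness argument that exploits the block structure induced by the decomposition $W=W_1\oplus W_2$. The starting observation is that, viewed as an element of $W^{p_i,q_i}$, the tensor $z_i=x_i\oplus y_i$ satisfies
\[
z_i\bigl(P_{b_1}\otimes\cdots\otimes P_{b_{q_i}}\bigr)=0
\]
unless all of $b_1,\dots,b_{q_i}$ coincide with some common $a\in\{1,2\}$; and in that case the output already lies in $W_a^{\ot p_i}$, i.e.\ $P_a^{\ot p_i}$ acts as the identity on it. This is because $x_i$ is built from vectors in $W_1$ and functionals in $W_1^*$ (which annihilate $W_2$), and similarly for $y_i$. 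Thus each box can be decorated with a single color $a_i\in\{1,2\}$, and if some input string of that box carries a different color, the whole trace vanishes.

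With this in hand, I would expand the given trace according to the coloring $(j_1,\dots,j_n)$. The $n$ input positions (on which the projectors $P_{j_k}$ act) get glued to the $n$ output positions via $L_\sigma^{(n)}$ and the trace pairing, which is exactly the gluing data encoded in the diagram of $p(n,\sigma,n_1,\ldots,n_r)$. In the language of the previous lemma, each input position lies in some $I_i$ and each output position in some $J_i$, and a nonzero contribution requires that every position in $I_i\cup\sigma(J_i)$ receives the same color $a_i$. Consequently, the set of colored strings decomposes according to the partition of $\{1,\dots,n\}$ into connected components of the diagram: each connected component must carry a single uniform color.

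Now apply the assumption that $p(n,\sigma,n_1,\ldots,n_r)$ is irreducible. By the lemma immediately preceding, irreducibility is equivalent to the non-existence of a proper partition $\{1,\dots,n\}=X\sqcup Y$ respecting the sets $I_i\cup\sigma(J_i)$. Equivalently, the diagram has a single connected component. Combined with the previous step, this forces the coloring $(j_1,\dots,j_n)$ to be constant, so $(j_1,\dots,j_n)$ must be either $(1,1,\dots,1)$ or $(2,2,\dots,2)$. Every other coloring contributes $0$, which is exactly the claim.

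The main obstacle is nothing deep, but rather the bookkeeping required to translate the analytic statement about $\Tr\bigl(L_\sigma^{(n)}(z_1^{\ot n_1}\ot\cdots\ot z_r^{\ot n_r})(P_{j_1}\ot\cdots\ot P_{j_n})\bigr)$ into the diagrammatic statement about colored strings, keeping careful track of how $\sigma$ identifies outputs $J_i$ with inputs $I_{i'}$. Once that dictionary is in place, the vanishing is immediate from the connectedness of the diagram.
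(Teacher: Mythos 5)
Your proof is correct and follows essentially the same route as the paper: both proofs observe that the block decomposition $z_i=x_i\oplus y_i$ forces all strings incident to a single box to carry the same label, propagate this along connected components, and then invoke irreducibility (equivalently, connectedness of the diagram) to conclude the coloring is constant. The only cosmetic difference is that you phrase the argument as an explicit ``coloring'' of boxes and cite the preceding lemma by name, while the paper phrases it directly in terms of string labels.
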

\begin{proof}
Assume that $(j_1,\ldots, j_n)\neq (1,1,\ldots, 1)$ or $(2,2,\ldots,2)$ and that 
$Tr_{W^{\ot n}}(L^{(n)}_{\sigma}(z_1^{\ot n_1}\ot \cdots\ot z_r^{\ot n_r})(P_{j_1}\ot P_{j_2}\ot\cdots\ot P_{j_n}))\neq 0$. 
Write $$T= L^{(n)}_{\sigma}(z_1^{\ot n_1}\ot \cdots\ot z_r^{\ot n_r})(P_{j_1}\ot P_{j_2}\ot\cdots\ot P_{j_n}).$$
We will think of the tuple $(j_1,\ldots, j_n)$ as a labeling of the strings which appear in the diagram which corresponds to $T$.

It holds that $z_i = x_i\oplus y_i\in W_1^{p_i,q_i}\oplus W_2^{p_i,q_i}$. 
This implies that if we have two strings going into the same box in the diagram which represents the morphism $T$ then they must have the same label, as otherwise $T=0$ and $\Tr(T)=0$, contradicting the assumption. 
In a similar way, two strings going out of the same box must have the same label.
This already implies that all the strings in the same connected component of the diagram which represents $\Tr(T)$ must be equal. Since we have assumed that $p(n,\sigma,n_1,\ldots,n_r)$ is irreducible, we have a single connected component, and thus $\Tr(T)$ can be non-zero only if $(j_1,\ldots j_n) = (1,1,\ldots,1)$ or $(2,2,\ldots,2)$. 
\end{proof}
The above proof thus shows that 
$$\oplus^*\Xi(p(n,\sigma,n_1,\ldots, n_r))((W_1,(x_i)),(W_2,(y_i))) =  $$
$$p(n,\sigma,n_1,\ldots, n_r)((W_1,(x_i))) + p(n,\sigma,n_1,\ldots,n_r)((W_2,(y_i))).$$
when $p(n,\sigma,n_1,\ldots,n_r)$ is an irreducible basic invariant.
This means that $\Delta:\Ainf\to \Ainf\ot\Ainf$ is well defined and gives another bialgebra structure on $\Ainf$. It also means that the irreducible basic invariants are primitive with respect to $\Delta$. 
It is known that such a bialgebra is in fact a Hopf algebra. Indeed, since $\Ainf$ is generated by the irreducible basic invariants, the antipode $S$ is defined by the formula $S(p_1p_2\cdots p_t) = (-1)^tp_1p_2\cdots p_t$ where $p_i$ are irreducible basic invariants. 
We have used here the fact that $S(p)=-p$ for a primitive elements, and that $S$ is multiplicative (and not just anti-multiplicative, since the algebra $\Ainf$ is commutative).

\section{The axioms ideal}\label{sec:axid}
The goal of this section is to discuss the ideals $I_{\T,d}$ and $I_{\T,d}^{\GL_d(K)}$ which appear in the $R3$ relations in Theorem \ref{thm:mainstructure}. 
Axioms for an algebraic structure are often given by stating equality of two diagrams, or, more generally, by stating that a certain linear combination of some constructible maps is zero (e.g. the Jacobi identity for Lie algebras). 
For example, assume that $x_1\in W^{1,2}$. The tensor $x_1$ can be thought of as a multiplication on $W$. the associativity of $x_1$ is equivalent to the vanishing of the following linear combination of diagrams, when interpreted as elements in $W^{1,3}$:
\begin{center}
\begin{tikzpicture}
	\begin{pgfonlayer}{nodelayer}
		\node [style=2function] (0) at (-1, 1) {$x_1$};
		\node [style=2function] (1) at (1, 1) {$x_1$};
		\node [style=none] (2) at (-1.25, 0.5) {};
		\node [style=none] (3) at (-0.75, 0.5) {};
		\node [style=none] (4) at (-1.25, 0) {};
		\node [style=none] (5) at (-0.75, 0) {};
		\node [style=none] (6) at (-1, 1.5) {};
		\node [style=none] (7) at (1, 1.5) {};
		\node [style=none] (8) at (0.75, 0.5) {};
		\node [style=none] (9) at (1.25, 0.5) {};
		\node [style=none] (10) at (0.75, 0) {};
		\node [style=none] (11) at (1.25, 0) {};
		\node [style=none] (12) at (1, 2) {};
		\node [style=none] (13) at (-1, 2) {};
		\node [style=none] (14) at (0, 2) {};
		\node [style=none] (15) at (0, 0) {};
		\node [style=none] (16) at (2, 1) {\Huge - };
		\node [style=2function] (17) at (3, 1) {$x_1$};
		\node [style=2function] (18) at (5, 1) {$x_1$};
		\node [style=none] (19) at (2.75, 0.5) {};
		\node [style=none] (20) at (3.25, 0.5) {};
		\node [style=none] (21) at (2.75, 0) {};
		\node [style=none] (22) at (3.25, 0) {};
		\node [style=none] (23) at (3, 1.5) {};
		\node [style=none] (24) at (5, 1.5) {};
		\node [style=none] (25) at (4.75, 0.5) {};
		\node [style=none] (26) at (5.25, 0.5) {};
		\node [style=none] (27) at (4.75, 0) {};
		\node [style=none] (28) at (5.25, 0) {};
		\node [style=none] (29) at (5, 2) {};
		\node [style=none] (30) at (3, 2) {};
		\node [style=none] (31) at (4, 2) {};
		\node [style=none] (32) at (4, 0) {};
	\end{pgfonlayer}
	\begin{pgfonlayer}{edgelayer}
		\draw (3.center) to (5.center);
		\draw (2.center) to (4.center);
		\draw (8.center) to (10.center);
		\draw (9.center) to (11.center);
		\draw (13.center) to (6.center);
		\draw (12.center) to (7.center);
		\draw [bend left=90, looseness=1.25] (13.center) to (14.center);
		\draw (14.center) to (15.center);
		\draw [bend right=90, looseness=1.50] (15.center) to (10.center);
		\draw (20.center) to (22.center);
		\draw (19.center) to (21.center);
		\draw (25.center) to (27.center);
		\draw (26.center) to (28.center);
		\draw (30.center) to (23.center);
		\draw (29.center) to (24.center);
		\draw (31.center) to (32.center);
		\draw [bend left=270, looseness=1.25] (29.center) to (31.center);
		\draw [bend left=90, looseness=1.50] (32.center) to (22.center);
	\end{pgfonlayer}
\end{tikzpicture}
\end{center}
If $x_2\in W^{1,0}$ then the statement that $x_2:K\to W$ defines a left unit for the multiplication $x_1$ is equivalent to the vanishing of the following linear combination of diagrams of degree $(1,1)$, when interpreted as an element in $W^{1,1}$. 
\begin{center}
\begin{tikzpicture}
	\begin{pgfonlayer}{nodelayer}
		\node [style=2function] (0) at (-1, 1) {$x_1$};
		\node [style=none] (2) at (-1.25, 0.5) {};
		\node [style=none] (3) at (-0.75, 0.5) {};
		\node [style=none] (4) at (-1.25, -0.75) {};
		\node [style=none] (6) at (-1, 1.5) {};
		\node [style=none] (13) at (-1, 2) {};
		\node [style=2function] (14) at (0.25, 1) {$x_2$};
		\node [style=none] (15) at (0.25, 1.5) {};
		\node [style=none] (16) at (0.25, 2) {};
		\node [style=none] (17) at (1, 2) {};
		\node [style=none] (18) at (1, -0.25) {};
		\node [style=none] (20) at (2, 1) {\Huge -};
		\node [style=1function] (21) at (3.5, 1) {$\Id_W$};
		\node [style=none] (22) at (3.5, 2) {};
		\node [style=none] (23) at (3.5, 1.5) {};
		\node [style=none] (24) at (3.5, 0.5) {};
		\node [style=none] (25) at (3.5, 0) {};
	\end{pgfonlayer}
	\begin{pgfonlayer}{edgelayer}
		\draw (13.center) to (6.center);
		\draw (15.center) to (16.center);
		\draw [bend left=90, looseness=2.00] (16.center) to (17.center);
		\draw (17.center) to (18.center);
		\draw [in=-75, out=-90, looseness=1.25] (18.center) to (2.center);
		\draw (4.center) to (3.center);
		\draw (22.center) to (23.center);
		\draw (24.center) to (25.center);
	\end{pgfonlayer}
\end{tikzpicture}
\end{center}
Similarly, commutativity of $x_1$ is equivalent to the vanishing of the linear combination 
\begin{center}
\begin{tikzpicture}
	\begin{pgfonlayer}{nodelayer}
		\node [style=2function] (0) at (-3, 1.25) {$x_1$};
		\node [style=none] (2) at (-1.75, 1.25) {\Huge -};
		\node [style=none] (3) at (-2.75, 0) {};
		\node [style=none] (4) at (-3.25, 0) {};
		\node [style=none] (5) at (-3, 1.75) {};
		\node [style=none] (9) at (-3, 2.5) {};
		\node [style=none] (10) at (-2.75, 0.75) {};
		\node [style=none] (11) at (-3.25, 0.75) {};
		\node [style=2function] (12) at (-0.75, 1.25) {$x_1$};
		\node [style=none] (13) at (-1, 0) {};
		\node [style=none] (14) at (-0.5, 0) {};
		\node [style=none] (15) at (-0.75, 1.75) {};
		\node [style=none] (16) at (-0.75, 2.5) {};
		\node [style=none] (17) at (-0.5, 0.75) {};
		\node [style=none] (18) at (-1, 0.75) {};
	\end{pgfonlayer}
	\begin{pgfonlayer}{edgelayer}
		\draw (9.center) to (5.center);
		\draw (10.center) to (3.center);
		\draw (11.center) to (4.center);
		\draw (16.center) to (15.center);
		\draw [in=90, out=-90, looseness=1.50] (17.center) to (13.center);
		\draw [in=90, out=-90, looseness=1.25] (18.center) to (14.center);
	\end{pgfonlayer}
\end{tikzpicture}
\end{center}

We thus make the following definition:
\begin{definition}
A theory for structures of type $((p_i,q_i))$ is a subset $\T\subseteq \sqcup_{p,q} Con^{p,q}$.
\end{definition}
We will think of the elements of the theory as those linear combinations that should vanish in models of the theory. To state this explicitly, we give the following definition:

\begin{definition}
If $(W,(x_i))$ is an algebraic structure of type $((p_i,q_i))$ we define the $(p,q)$-realization map $$\Re^{p,q}:Con^{p,q}\to W^{p,q}$$ for $p,q\in \N$ to be the linear map which sends every diagram to its interpretation as a linear map in $W^{p,q}$. 
In particular, $\Re^{0,0}:\winf\to W^{0,0}=K$ is just the character of invariants of $W$. 
\end{definition}
Since equivalent diagrams define identical linear maps the realization maps are well defined. 
\begin{definition}
We say that $W$ is a model for a theory $\T$ if for every $x\in \T\cap Con^{p,q}$ it holds that $\Re^{p,q}(x)=0$ in $W^{p,q}$. 
\end{definition}
The concept here of ``model'' and ``theory'' differs slightly from that of first-order logic. In first order logic only functions $W^{\times a}\to W$ are allowed, and we consider here maps which are assume to be linear between different tensor powers of $W$. 
Fix now a theory $\T$.
\begin{lemma}
Let $W=K^d$ and let $U = U(W)=\bigoplus_{i=1}^r W^{p_i,q_i}$ be as defined in Subsection \ref{subsec:bigalgebra}. The set of all tuples $(x_i)\in U$ which are models of $\T$ form a Zariski closed subset of $U$ which is stable under the action of $\Ga=\GL_d(K)$. 
\end{lemma}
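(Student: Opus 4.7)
The plan is to split the claim into two independent pieces: (i) for every element $x\in\T$, the locus $Z_x:=\{(x_i)\in U\mid \Re^{p,q}(x)(W,(x_i))=0\}$ is Zariski closed and $\GL(W)$-stable; (ii) then $Y(W)=\bigcap_{x\in \T}Z_x$ is also closed and $\GL(W)$-stable as an intersection of such sets (note that we allow arbitrary, possibly infinite, intersections of closed subsets in the Zariski topology).

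For the closedness in (i), I would fix a diagram $Di\in Con^{p,q}$ and look at how $\Re^{p,q}(Di)(W,(x_i))\in W^{p,q}$ depends on the structure tensors. By Section~\ref{sec:diagrams} every diagram can be brought to the standard form $c(j,\sigma,\tau,n_1,\ldots,n_r,m)$, which represents the linear map $ev^j(L^{(p+j)}_{\sigma}(x_1^{\ot n_1}\ot\cdots\ot x_r^{\ot n_r}\ot \Id_W^{\ot m})L^{(q+j)}_\tau)$. Picking a basis of $W$ and the associated structure constants $a^{\bullet}_{\bullet}$ of Subsection~\ref{subsec:algstr}, each coordinate of this linear map in $W^{p,q}$ is, by direct inspection, a polynomial (in fact a monomial of multidegree $(n_1,\ldots,n_r)$, summed over contracted indices) in the structure constants. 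Extending by linearity, a general element $x\in \T\cap Con^{p,q}$ also gives a polynomial map $U\to W^{p,q}$, so its zero set $Z_x$ is defined by the vanishing of $\dim W^{p,q}=d^{p+q}$ polynomials and is therefore Zariski closed.

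For the $\GL(W)$-stability I would use the naturality of the realization map. More precisely, the building blocks of any diagram -- tensor product of linear maps, permutations of tensor factors $L^{(n)}_\sigma$, evaluation maps $ev:W\ot W^*\to K$, and the identity $\Id_W$ -- are all $\GL(W)$-equivariant, where the action on $W^{p,q}\cong \Hom(W^{\ot q},W^{\ot p})$ is $g\cdot T=g^{\ot p}\circ T\circ (g^{-1})^{\ot q}$. Since the $\GL(W)$-action on $U$ is precisely the one making each $x_i$ transform this way, a straightforward compositional induction on the construction of a diagram gives the equivariance identity
\[\Re^{p,q}(Di)(W,(g\cdot x_i))=g\cdot\Re^{p,q}(Di)(W,(x_i)).\]
In particular $\Re^{p,q}(Di)(W,(x_i))=0$ if and only if $\Re^{p,q}(Di)(W,(g\cdot x_i))=0$, so each $Z_x$ is $\GL(W)$-stable, and hence so is $Y(W)=\bigcap_x Z_x$.

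This proof is essentially mechanical; the only point requiring care is the bookkeeping in the equivariance identity. Concretely, one must verify that after grouping $W$-tensorands and $W^*$-tensorands as in Subsection~\ref{subsec:bigalgebra} and inserting the permutations $\alpha^{(p_i)}_{(n_i)}$, $\alpha^{(q_i)}_{(n_i)}$, the resulting linear map still transforms by the natural action on $W^{p,q}$. This follows from the fact that the permutations $L^{(n)}_\sigma$ commute with the diagonal $\GL(W)$-action on $W^{\ot n}$ (Theorem~\ref{thm:SW}), together with naturality of the evaluation pairing; no new ideas beyond those already deployed in Sections~\ref{sec:proofthm1} and~\ref{sec:diagrams} are needed.
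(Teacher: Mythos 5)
Your proof is correct and follows essentially the same route as the paper: reduce to a single axiom, express the coordinates of $\Re^{p,q}(Di)$ in a basis as polynomials in the structure constants $a^\bullet_\bullet$ to get Zariski-closedness, and then invoke naturality of the diagram interpretation (which the paper phrases as ``a change of basis will not change the validity of axioms defined purely by diagrams'') for $\GL(W)$-stability. Your version makes the equivariance step a bit more explicit, but no new ideas are introduced.
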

\begin{proof}
we can easily reduce to the case where $\T$ contains a single axiom $x\in Con^{p,q}$. Fix a basis $\{e_i\}$ of $W$ and a dual basis $\{f_i\}$ of $W^*$. 
If $Di$ is a diagram in $Con^{p,q}$ then $\Re^{p,q}(Di)$ can be written as
$$\sum_{j_1,\ldots,j_p,k_1,\ldots,k_q}f_{j_1,\ldots,j_p,k_1,\ldots,k_q}e_{j_1}\ot\cdots\ot e_{j_p}\ot f_{k_1}\ot\cdots\ot f_{k_q}$$ 
where $f_{j_1,\ldots,j_p,k_1,\ldots,k_q}$ are polynomials in $a^{\bullet}_{\bullet}$ where $a^{\bullet}_{\bullet}$ are defined in Subsection \ref{subsec:algstr} (see also the discussion on associativity of multiplication at the end of that subsection). 
In other words, every $r\in W^{q,p}$ gives a polynomial function $$f_r:=\langle\Re^{p,q}(x),r\rangle$$ which should vanish on models of $\T$, and the zero set of the polynomials $f_r$ is exactly the set of all models of $\T$ in $U$. This set is thus Zariski closed. It is easy to see that it is stable under $\Ga$, as a change of basis will not change the validity of axioms defined purely by diagrams.
\end{proof}
We write $J_{\T,d}= (f_r)_{r\in W^{q,p}}\subseteq K[U_d]$. We are thus interested in the Zariski closed set defined by $J_{\T,d}$. The coordinate ring for this closed set is $K[U_d]/rad(J_{\T,d})= K[Y_d]$. It thus holds that $I_{\T,d} = \Ker(K[U_d]\to K[Y_d])=rad(J_{\T,d}).$ It might be the case that $I_{\T,d}\neq J_{\T,d}$. This happens, for example, when we have an algebraic structure containing a single endomorphism $T:W\to W$. If we take the theory $\T = \{T^2\}$ then $\Tr(T)$ is in $I_{\T,d}$ but not in $J_{\T,d}$. Indeed, $\Tr(T)=0$ whenever $T^2=0$ so this polynomial will vanish on every model of the theory, and so it is in $I_{\T,d}$, but it cannot be contained in $J_{\T,d}$ since this ideal is generated in degree 2, and $\Tr(T)$ has degree 1.
The passage between the ideal and its radical will not play a huge difference here, since we have equality of ideals $rad(J_{\T,d}^{\Ga}) = (rad(J_{\T,d}))^{\Ga}$ in $K[U_d]^{\Ga}$. If $\dim_K W = d$ the ideal $J_{\T,d}^{\Ga}$ defines the subset of $X_d$ of all isomorphism classes with closed orbits of structures which are models for $\T$. 

We will describe now an ideal $\I_{\T}$ of $\winf$, and show that its image in $K[U_d]^{\Ga}$ is exactly $J_{\T,d}^{\Ga}$. We will use the fact that the ideal $J_{\T,d}$ is defined using pairing of the axioms with elements in the dual space $W^{q,p}$. We begin with the following definition:
\begin{definition}
We define a pairing $\pair^{p,q}:Con^{p,q}\ot Con^{q,p}\to Con^{0,0} = \winf$ on the basis elements by the following pictorial description:
\begin{center}
\begin{tikzpicture}
	\begin{pgfonlayer}{nodelayer}
		\node [style=3function] (0) at (2, 0) {$A$};
		\node [style=3function] (1) at (5.25, 0) {$B$};
		\node [style=none] (2) at (1.75, -0.5) {};
		\node [style=none] (3) at (2.25, -0.5) {};
		\node [style=none] (4) at (1.5, 0.5) {};
		\node [style=none] (5) at (2, 0.5) {};
		\node [style=none] (6) at (2.5, 0.5) {};
		\node [style=none] (7) at (5, 0.5) {};
		\node [style=none] (8) at (5.5, 0.5) {};
		\node [style=none] (9) at (4.75, -0.5) {};
		\node [style=none] (10) at (5.25, -0.5) {};
		\node [style=none] (11) at (5.75, -0.5) {};
		\node [style=none] (12) at (3, 0.5) {};
		\node [style=none] (13) at (3.5, 0.5) {};
		\node [style=none] (14) at (4, 0.5) {};
		\node [style=none] (15) at (4, -0.5) {};
		\node [style=none] (16) at (3.5, -0.5) {};
		\node [style=none] (17) at (3, -0.5) {};
		\node [style=none] (18) at (7, 0.5) {};
		\node [style=none] (19) at (6.5, 0.5) {};
		\node [style=none] (20) at (7, -0.5) {};
		\node [style=none] (21) at (6.5, -0.5) {};
		\node [style=none] (22) at (0.25, 0) {=};
		\node [style=none] (23) at (-0.75, 0) {$\Huge ) $};
		\node [style=3function] (24) at (-5, 0) {$A$};
		\node [style=3function] (25) at (-1.75, 0) {$B$};
		\node [style=none] (26) at (-5.25, -0.5) {};
		\node [style=none] (27) at (-4.75, -0.5) {};
		\node [style=none] (28) at (-5.5, 0.5) {};
		\node [style=none] (29) at (-5, 0.5) {};
		\node [style=none] (30) at (-4.5, 0.5) {};
		\node [style=none] (31) at (-2, 0.5) {};
		\node [style=none] (32) at (-1.5, 0.5) {};
		\node [style=none] (33) at (-2.25, -0.5) {};
		\node [style=none] (34) at (-1.75, -0.5) {};
		\node [style=none] (35) at (-1.25, -0.5) {};
		\node [style=none] (39) at (-1.25, -1.25) {};
		\node [style=none] (40) at (-1.75, -1.25) {};
		\node [style=none] (41) at (-2.25, -1.25) {};
		\node [style=none] (42) at (-2, 1.25) {};
		\node [style=none] (43) at (-1.5, 1.25) {};
		\node [style=none] (44) at (-4.5, 1.25) {};
		\node [style=none] (45) at (-5, 1.25) {};
		\node [style=none] (46) at (-5.5, 1.25) {};
		\node [style=none] (47) at (-5.25, -1.25) {};
		\node [style=none] (48) at (-4.75, -1.25) {};
		\node [style=none] (49) at (-6.5, 0) {\large pair $($};
		\node [style=none] (50) at (-3.5, -0.25) {\Large ,};
	\end{pgfonlayer}
	\begin{pgfonlayer}{edgelayer}
		\draw [bend left=90, looseness=1.50] (4.center) to (12.center);
		\draw [bend left=270, looseness=1.50] (13.center) to (5.center);
		\draw [bend left=270, looseness=1.50] (14.center) to (6.center);
		\draw (12.center) to (17.center);
		\draw (13.center) to (16.center);
		\draw (14.center) to (15.center);
		\draw [bend right=90, looseness=1.50] (15.center) to (11.center);
		\draw [bend right=90, looseness=1.50] (16.center) to (10.center);
		\draw [bend right=90, looseness=1.50] (17.center) to (9.center);
		\draw [bend left=90, looseness=1.75] (8.center) to (18.center);
		\draw [bend left=90, looseness=1.75] (7.center) to (19.center);
		\draw (21.center) to (19.center);
		\draw (20.center) to (18.center);
		\draw [bend left=90] (21.center) to (2.center);
		\draw [bend left=90, looseness=1.25] (20.center) to (3.center);
		\draw (39.center) to (35.center);
		\draw (34.center) to (40.center);
		\draw (33.center) to (41.center);
		\draw (43.center) to (32.center);
		\draw (42.center) to (31.center);
		\draw (44.center) to (30.center);
		\draw (45.center) to (29.center);
		\draw (46.center) to (28.center);
		\draw (26.center) to (47.center);
		\draw (27.center) to (48.center);
	\end{pgfonlayer}
\end{tikzpicture}
\end{center}
\end{definition}
In other words- the pairing of $x\in Con^{p,q}$ with $y\in Con^{q,p}$
is given by connecting the $p$ free output strings of $x$ to the $p$ free input strings of $y$, and the $q$ free input strings of $x$ to the $q$ free output strings of $y$. We can give a rigorous formula using the formalism of Section \ref{sec:diagrams}, but it will not be very enlightening. 

Since connecting strings in diagrams corresponds to applying iteratively the evaluation $W\ot W^*\to K$ the following diagram is commutative:
\begin{equation}\xymatrix{ Con^{p,q}\ot Con^{q,p}\ar[rr]^{\Re^{p,q}\ot\Re^{q,p}}\ar[d]^{\pair^{p,q}} & & W^{p,q}\ot W^{q,p}\ar[d]^{ev} \\ \winf\ar[rr]^{\Re^{0,0}} & & K} \end{equation}
Here $\Re^{0,0}=\chi_{(W,(x_i))}:\winf\to K$ is the character of invariants of $W$ (see Definition \ref{def:charinv}).
The following lemma is immediate from the commutativity of the above diagram. 
\begin{lemma}
If $y\in Con^{q,p}$, $x\in \T\cap Con^{p,q}$ and $(W,(x_i))$ is a model of $\T$ then $\Re^{0,0}(\pair(x,y))=0$. \end{lemma}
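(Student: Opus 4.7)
The plan is to read off the result directly from the commutative square that precedes the lemma, using the definition of a model. There is essentially no calculation to do; the work has been done in setting up the realization maps and the pairing.

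First I would invoke the commutativity of the diagram displayed just before the lemma, which gives the identity
\begin{equation*}
\Re^{0,0}\bigl(\pair^{p,q}(x,y)\bigr) \;=\; ev\bigl(\Re^{p,q}(x)\otimes \Re^{q,p}(y)\bigr)
\end{equation*}
for every $x\in Con^{p,q}$ and $y\in Con^{q,p}$. This is the key input, and it was justified by the observation that connecting free strings in a diagram corresponds exactly to iterating the evaluation map $W\otimes W^*\to K$.

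Next I would use the hypothesis that $(W,(x_i))$ is a model of the theory $\T$. By definition, this means that for every element of $\T\cap Con^{p,q}$ the corresponding realization in $W^{p,q}$ vanishes. In particular, since $x\in \T\cap Con^{p,q}$, we have $\Re^{p,q}(x)=0$ in $W^{p,q}$.

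Finally, substituting $\Re^{p,q}(x)=0$ into the displayed identity and using bilinearity of $ev$ yields
\begin{equation*}
\Re^{0,0}\bigl(\pair^{p,q}(x,y)\bigr) \;=\; ev\bigl(0\otimes \Re^{q,p}(y)\bigr) \;=\; 0,
\end{equation*}
as required. There is no genuine obstacle here; the only thing one might want to double-check is the commutativity of the square, which in turn reduces to the elementary fact that, under the identifications of Subsection~\ref{subsec:natid}, the pairing $W^{p,q}\otimes W^{q,p}\to K$ is given by the trace pairing, matching the graphical operation of closing up all free strands in $\pair^{p,q}$.
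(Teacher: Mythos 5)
Your proof is correct and matches the paper's reasoning exactly: the paper states that the lemma is immediate from the commutativity of the displayed square, and you spell out the two-step deduction (commutativity plus $\Re^{p,q}(x)=0$ from the model hypothesis) that the paper leaves implicit.
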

\begin{definition}
We define $\I_{\T}\subseteq \winf$ to be the ideal generated by all elements of the form $\pair(x,y)$ where $x\in\T\cap Con^{p,q}$ and $y\in Con^{q,p}$, for some $(p,q)\in \N^2$.
\end{definition}

\begin{proposition}
We have $\Phi_d(\I_{\T}) = J_{\T,d}^{\Ga}$.
\end{proposition}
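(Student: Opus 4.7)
The claim splits into two inclusions; only the reverse one is substantive.

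\textbf{Forward inclusion.} Every generator $\pair(x,y) \in \I_\T$, with $x \in \T\cap Con^{p,q}$ and $y\in Con^{q,p}$, maps under $\Phi_d$ to the scalar polynomial function on $U_d$ given by $(x_i) \mapsto ev\bigl(\Re^{p,q}(x)(x_i)\otimes \Re^{q,p}(y)(x_i)\bigr)$, by the commutative square defining $\pair$. Choose a basis $\{v_I\}$ of $W^{p,q}$ with dual basis $\{v_I^*\}$ of $W^{q,p}$ and expand $\Re^{p,q}(x) = \sum_I A_I\,v_I$ with $A_I\in K[U_d]$; then $\Phi_d(\pair(x,y)) = \sum_I A_I \cdot \langle\Re^{q,p}(y),v_I\rangle$, and each $A_I = f_{v_I^*}$ is a generator of $J_{\T,d}$. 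Invariance is automatic since $\Phi_d$ lands in $K[U_d]^\Ga$. Thus $\Phi_d(\I_\T)\subseteq J_{\T,d}^\Ga$.

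\textbf{Reverse inclusion.} By linearity in $\T$ it suffices to treat a single axiom $x\in\T\cap Con^{p,q}$. Consider the $\Ga$-equivariant surjection of $K[U_d]$-modules
\[\mu\colon K[U_d]\otimes_K W^{q,p}\twoheadrightarrow J_{\T,d}, \qquad h\otimes r\mapsto h\cdot f_r.\]
Since $\Ga=\GL_d(K)$ is reductive in characteristic zero, the $\Ga$-invariants functor is exact, so this induces a surjection $(K[U_d]\otimes W^{q,p})^\Ga\twoheadrightarrow J_{\T,d}^\Ga$. Any $\phi\in J_{\T,d}^\Ga$ therefore admits a lift $F\in(K[U_d]\otimes W^{q,p})^\Ga$ with $\mu(F)=\phi$.

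The crux is to realize $F$ as $\Re^{q,p}(y)$ for some $y\in Con^{q,p}$, where $\Re^{q,p}(y)$ is viewed as an element of $K[U_d]\otimes W^{q,p}$ via the induced polynomial map $U_d\to W^{q,p}$. Decomposing by multidegree $(n_1,\ldots,n_r)$, and combining the isomorphism $K[U_d]_{(n_i)}\cong (U_{n_1,\ldots,n_r})_{S_{n_i}}$ of Subsection \ref{subsec:bigalgebra} with $U_{n_1,\ldots,n_r}\cong W^{n',n}$ (where $n=\sum p_i n_i,\ n'=\sum q_i n_i$) and exactness of the $\Ga$-invariants functor, one obtains
\[(K[U_d]_{(n_i)}\otimes W^{q,p})^\Ga \;\cong\; \bigl((W^{n'+q,\,n+p})^\Ga\bigr)_{S_{n_i}}.\]
By Theorem \ref{thm:SW}, the right-hand side vanishes unless $n+p=n'+q$, in which case it is spanned by the classes of $L^{(n+p)}_\sigma$ for $\sigma\in S_{n+p}$. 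Each such $L^{(n+p)}_\sigma$ is, by construction, the linear map realized by a diagram of the form $c(j,\sigma,\Id,n_1,\ldots,n_r,0)\in Con^{q,p}_{(n_i)}$ in which $p$ of the free strings correspond to the external factor $W^{q,p}$ and the rest are contracted with the structure tensors; this follows by the same permutation/Schur-Weyl bookkeeping as in the proof of Theorem \ref{thm:mainstructure}. This produces $y\in Con^{q,p}$ with $\Re^{q,p}(y)=F$, and then the commutative diagram defining $\pair$ gives
\[\Phi_d(\pair(x,y)) = \Re^{0,0}(\pair(x,y)) = ev\circ\bigl(\Re^{p,q}(x)\otimes\Re^{q,p}(y)\bigr) = \mu(F) = \phi,\]
so $\phi\in\Phi_d(\I_\T)$. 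The main obstacle is precisely this realization step: extending the diagrammatic description of $K[U_d]^\Ga$ from Section \ref{sec:proofthm1} to a description of $(K[U_d]\otimes W^{q,p})^\Ga$ requires carefully matching Schur-Weyl permutations on $W^{\otimes(n+p)}$ with diagrams in $Con^{q,p}$, keeping track of which free strings pertain to the external factor $W^{q,p}$ versus the internal structure tensors.
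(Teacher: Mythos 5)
Your overall architecture matches the paper's: reduce to a single axiom, view $J_{\T,d}$ as the image of $K[U_d]\ot W^{q,p}$ under $h\ot r\mapsto hf_r$, use reductivity to carry the surjection through $\Ga$-invariants, split by multidegree, and invoke Schur--Weyl to write the invariants as spans of permutations. The forward inclusion is also essentially identical to the paper's. Up to that point the proposal is sound.

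The gap is in the final realization step, and it is the one step the paper is most explicit about. You assert that each $L^{(n+p)}_\sigma$ ``is, by construction, the linear map realized by a diagram of the form $c(j,\sigma,\Id,n_1,\ldots,n_r,0)\in Con^{q,p}$'' — note the trailing $0$, i.e.\ a diagram with no $\Id_W$ boxes. This is false in general. After shuffling by $\sigma$, some of the $p$ output strings of the axiom $x$ may be routed directly to some of its $q$ input strings, without passing through any of the structure boxes $x_1^{\ot n_1}\ot\cdots\ot x_r^{\ot n_r}$. A diagram $y$ of degree $(q,p)$ cannot connect one of its free input strings to one of its free output strings except through a box, so a ``pass-through'' of this kind forces an $\Id_W$ box, i.e.\ $m>0$. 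The paper resolves precisely this by invoking Equation \eqref{eq:usingid}: it inserts copies of $\Id_W$ into $Di$, forming $Di\star(\Id_W)^{\star m}$, so that the problematic strings are broken in two and $y$ can be taken in $Con^{q,p}$ with $\Id_W$-boxes present. Your proposal acknowledges that the realization step is the crux, but the concrete claim you make about it is wrong as stated; with the $\Id_W$ correction the argument goes through exactly as in the paper.
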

\begin{proof}(see also the proof of Theorem 1.4. in \cite{meir1})
Throughout the proof we will assume for simplicity that $\T$ contains a single axiom $x$ of degree $(p,q)$. The general case follows from the following argument:
If $\T = \{t_i\}_i$ and we write $\T_i$ for the singleton $\{t_i\}$ then it holds that $\I_{\T} = \sum_{i} \I_{\T_i}$ and $J_{\T,d} = \sum_i J_{\T_i,d}$. This means that we have a natural surjective map $$\oplus_i J_{\T_i,d}\to J_{\T,d}.$$ Since $\Ga$ is reductive this map remains surjective after taking invariants and we get a surjective map $\oplus_i J_{\T_i,d}^{\Ga}\to J_{\T,d}^{\Ga}$. In other words, it holds that $$J_{\T,d}^{\Ga} = \sum_i J_{\T_i,d}^{\Ga}$$ and it is enough to prove the proposition in case $\T$ is a singleton. 

The homomorphism $\Phi_d$ maps $\winf$ to $K[U_d]^{\Ga}$. We can also think of this as a homomorphism from $\winf$ to $K[U_d]$ with image in the $\Ga$-invariants. So to show inclusion in the first direction, it will be enough to show that $\Phi_d(\I_{\T})\subseteq J_{\T,d}$. 

To do so, we use the fact that $J_{\T,d}$ is generated by elements of the form $f_z:=\langle x,z\rangle$ where $z\in W^{q,p}$. Consider $\Phi_d(\pair(x,y))$. 
If we write $\{\epsilon_i\}$ for the basis of $W^{p,q}$ arising from the original basis of $W$ and $\{\delta_i\}$ for the dual basis of $W^{q,p}$ then $\Re^{p,q}(x)$ can be written as $\sum_i f_i\epsilon_i$ where $f_i\in K[U_d]$ and $y$ can similarly be written as $\sum_i g_i\delta_i$. It then follows that $\phi(\pair(x,y)) = \sum_i f_ig_i\in K[U_d]$ is contained in the ideal generated by $f_i$. But this is exactly the ideal $J_{\T,d}$.

For the other direction, consider the ideal $J_{\T,d}^{\Ga}$ 
Using the same terminology as above, a general element in $J_{\T,d}$ is of the form $\sum f_ih_i$ for some $h_i\in K[U_d]$. To phrase it in a different way, write $V= span\{f_i\}$. Then $V$ is a $\Ga$-representation, and we have a surjective map $V\ot K[U_d]\to J_{\T,d}$. The algebra $K[U_d]$ admits a direct sum decomposition given by the grading $$K[U] = \bigoplus_{n_1,\ldots,n_r}K[U]_{n_1,\ldots,n_r}.$$ Since $\Ga$ is reductive, taking $\Ga$-invariants preserves surjective maps. Taking $\Ga$-invariants also commute with direct sums so the map $$\zeta:\bigoplus_{n_1,\ldots,n_r}(V\ot K[U]_{n_1,\ldots,n_r})^{\Ga}\to I_{\T}^{\Ga}$$ is surjective. So it will be enough to prove that for every $n_1,\ldots, n_r$ the image of $(V\ot K[U]_{n_1,\ldots,n_r})^{\Ga}$ under $\zeta$ is contained in $\Phi_d(\I_{\T})$. Fix such a tuple $(n_1,\ldots,n_r)$ and write $V' = K[U]_{n_1,\ldots,n_r}$.

We have an isomorphism $V\cong (W^{p,q})^*=W^{q,p}$ as $\Ga$-representations, and $V'$ is a quotient of $W^{q',p'}$ where $p' = \sum_i n_ip_i$ and $q' = \sum_i n_iq_i$. If $p+p'\neq q+q'$ then $(V\ot W^{q',p'})^{\Ga}=0$ and as a result $(V\ot V')^{\Ga}=0$ and there is nothing to prove (again- we use here the reductivity of $\Ga$).
Assume that $p+p' = q+q'=n$. Schur-Weyl duality tells us that in this case all the resulting $\Ga$-invariants in $(V\ot V')$ arise from permutations $\sigma\in S_n$. In other words, following back all the identifications we have used, the resulting element of $J_{\T,d}^{\Ga}$ can be written as 
$$f=\Tr(L^{(n)}_{\sigma}\Re^{p,q}(x)\ot \xini).$$

This element can further be simplified to be of the form $\Phi_d(\pair(x,y))$ in the following way: write $Di = \xini$. Then the element $f$ is represented by the diagram
\begin{equation}
\begin{tikzpicture}
	\begin{pgfonlayer}{nodelayer}
		\node [style=none] (2) at (-3, 2) {};
		\node [style=none] (3) at (-3, 1) {};
		\node [style=none] (4) at (-3, 0) {};
		\node [style=none] (5) at (-3, -0.5) {};
		\node [style=3function] (6) at (-2.5, 0.5) {$x$};
		\node [style=3function] (7) at (0.5, 0.5) {$Di$};
		\node [style=none] (8) at (-2.5, 2) {};
		\node [style=none] (9) at (-2.5, 1) {};
		\node [style=none] (10) at (-2.25, 0) {};
		\node [style=none] (11) at (-2.25, -0.5) {};
		\node [style=none] (12) at (0.5, 2) {};
		\node [style=none] (13) at (0.5, 1) {};
		\node [style=none] (14) at (0.5, 0) {};
		\node [style=none] (15) at (0.5, -0.5) {};
		\node [style=none] (16) at (1, 2) {};
		\node [style=none] (17) at (1, 1) {};
		\node [style=none] (18) at (1, 0) {};
		\node [style=none] (19) at (1, -0.5) {};
		\node [style=none] (20) at (-2, 2) {};
		\node [style=none] (21) at (-2, 1) {};
		\node [style=none] (22) at (0, 0) {};
		\node [style=none] (23) at (0, -0.5) {};
		\node [style=multi function] (24) at (-1, 2.5) {$L^{(n)}_{\sigma}$};
		\node [style=none] (25) at (-3, 3) {};
		\node [style=none] (26) at (-2.5, 3) {};
		\node [style=none] (27) at (0.5, 3) {};
		\node [style=none] (28) at (1, 3) {};
		\node [style=none] (29) at (-2, 3) {};
		\node [style=none] (30) at (-3, 3.75) {};
		\node [style=none] (31) at (-2.5, 3.75) {};
		\node [style=none] (32) at (0.5, 3.75) {};
		\node [style=none] (33) at (1, 3.75) {};
		\node [style=none] (34) at (-2, 3.75) {};
		\node [style=none] (35) at (-5, 3.75) {};
		\node [style=none] (36) at (-4.25, 3.75) {};
		\node [style=none] (37) at (-3.75, 3.75) {};
		\node [style=none] (39) at (-4.25, -0.5) {};
		\node [style=none] (40) at (-3.75, -0.5) {};
		\node [style=none] (41) at (-5, -0.5) {};
		\node [style=none] (42) at (2.25, 3.75) {};
		\node [style=none] (43) at (3, 3.75) {};
		\node [style=none] (44) at (2.25, -0.5) {};
		\node [style=none] (45) at (3, -0.5) {};
	\end{pgfonlayer}
	\begin{pgfonlayer}{edgelayer}
		\draw (4.center) to (5.center);
		\draw (10.center) to (11.center);
		\draw (22.center) to (23.center);
		\draw (14.center) to (15.center);
		\draw (18.center) to (19.center);
		\draw (16.center) to (17.center);
		\draw (12.center) to (13.center);
		\draw (8.center) to (9.center);
		\draw (20.center) to (21.center);
		\draw (2.center) to (3.center);
		\draw (30.center) to (25.center);
		\draw (31.center) to (26.center);
		\draw (34.center) to (29.center);
		\draw (32.center) to (27.center);
		\draw (33.center) to (28.center);
		\draw (42.center) to (44.center);
		\draw (43.center) to (45.center);
		\draw (37.center) to (40.center);
		\draw (39.center) to (36.center);
		\draw (35.center) to (41.center);
		\draw [bend left=90, looseness=1.75] (5.center) to (40.center);
		\draw [bend left=270, looseness=1.50] (30.center) to (37.center);
		\draw [bend left=270, looseness=1.25] (31.center) to (36.center);
		\draw [bend left=90, looseness=1.25] (11.center) to (39.center);
		\draw [bend left=90, looseness=1.25] (35.center) to (34.center);
		\draw [bend right=90, looseness=0.75] (41.center) to (23.center);
		\draw [bend left=90, looseness=1.25] (33.center) to (42.center);
		\draw [bend left=270, looseness=1.25] (43.center) to (32.center);
		\draw [bend right=90, looseness=1.50] (19.center) to (44.center);
		\draw [bend left=90, looseness=1.25] (45.center) to (15.center);
	\end{pgfonlayer}
\end{tikzpicture}
\end{equation}
If all the output strings of $x$, after being shuffled by $L_{\sigma}^{(n)}$ are connected to output strings of $Di$, then what we get here is a pairing of $x$ with the diagram resulted from $Di$ by closing some of its input strings with some of its output strings. If, on the other hand, some output strings of $x$ are connected to input strings of $x$ after shuffling,  we can use Equation \ref{eq:usingid} and add some copies of $\Id_W$ to $Di$ to break down these strings into two- the output string from $x$ will go to the input string of this $\Id_W$ box, and the output string of this box will go to the relevant input string of $x$. By taking the diagram formed from $Di\star (\Id_W)^{\star m}$ and closing it accordingly, we still get that the invariant is of the form $\pair(x,y)$ for some $y\in Con^{q,p}$ and we are done.
\end{proof}

\section{PSH-algebra structure of $\Ainf$ and a proof of Theorem \ref{thm:Ainfpsh}}\label{sec:pshalg}
We recall the following definition from \cite{Zelevinsky}:
\begin{definition}
A positive self-adjoint Hopf algebra (or PSH-algebra) is an $\N$-graded $\Z$-Hopf algebra $A$ equipped with a graded basis $B$ of $A$ and a pairing $\langle-,-\rangle:A\ot_{\Z} A\to \Z$ such that the following conditions are satisfied:
\begin{enumerate}
\item The basis $B$ is orthonormal with respect to $\langle-,-\rangle$. In other words- for every $x,y\in B$ we have $\langle x,y\rangle = \delta_{x,y}$.
\item The multiplication is adjoint to the comultiplication with respect to $\langle -,- \rangle$ where $A\ot_{\Z}A$ has the tensor product pairing. 
\item The unit $u:\Z\to A$ and the counit $\epsilon:A\to\Z$ are adjoint with respect to $\langle -,-\rangle$ where $\Z$ has the canonical pairing.
\item The algebra $A$ is connected, that is $A_0=\Z$.
\item All the structure constants of $m,\Delta,u,\epsilon$ with respect to the basis $B$ are non-negative integers.
\end{enumerate}
By a graded basis we mean that $B=\sqcup_n B_n$, where $B_n$ is a basis for $A_n$. 
\end{definition}
Our algebra $\Ainf$ is not defined over $\Z$ but over a field $K$ of characteristic zero. Nevertheless, all the structure constants for $m$ and $\Delta$ with respect to the basis $B$ of monomials in the irreducible basic invariants are non-negative integers. We now adjust the definition of Zelevinsky to fit in our framework.
\begin{definition}
A rational $K$-PSH-algebra is an $\N^r$-graded $K$-Hopf algebra $A$ equipped with a graded basis $B$ which satisfies the following conditions:
\begin{enumerate}
\item The basis $B$ is orthogonal and positive with respect to $\langle-,-\rangle$. In other words- for every $x,y\in B$ we have $\langle x,y\rangle = \delta_{x,y}c(x)$ for some $c(x)\in\Q_{+}.$
\item The multiplication is adjoint to the comultiplication with respect to $\langle -,- \rangle$ where $A\ot_K A$ has the tensor product pairing. 
\item The unit $u:K\to A$ and the counit $\epsilon:A\to K$ are adjoint with respect to $\langle -,-\rangle$ where $K$ has the canonical pairing.
\item The algebra $A$ is connected, that is $A_0=K$.
\item all the structure constants of $m,\Delta,u,\epsilon$ with respect to the basis $B$ are in $\Q_{+}$.
\end{enumerate}
The number $r$ which appears in the grading is some positive integer. 
\end{definition}
In the sequel we will refer simply to rational PSH-algebra when the field $K$ will be clear from context. 
The serious relaxation we made was in the first axiom, where instead of talking about orthonormal basis we speak now about orthogonal basis. It turns out that the basis of monomials we have for $\Ainf$ will furnish a natural structure of a rational PSH-algebra, but for which the constants $c(x)$ will often be integers $\neq 1$. 

We define the following inner product on $(\Ainf)_{n_1,\ldots,n_r}$ for every tuple $(n_1,\ldots,n_r)\in \N^r$:

$$\langle p(n,\sigma,n_1,\ldots,n_r),p(n,\tau,n_1,\ldots n_r)\rangle = |\{g\in S_{n_1,\ldots n_r}| \alpha_1(g)\sigma\alpha_2(g^{-1}) = \tau\}|$$
where we write $\alpha_1 = \alpha^{(q_i)}_{(n_i)}$ and $\alpha_2 = \alpha^{(p_i)}_{(n_i)}$. 
Since equality between basic invariants is defined using the action of $S_{n_1,\ldots, n_r}$ it holds that if $p(n,\sigma,n_1,\ldots,n_r)\neq p(n,\tau,n_1,\ldots,n_r)$ then their pairing is zero, and the squared norm of $p(n,\sigma,n_1,\ldots,n_r)$ is the cardinality of its stabilizer in $S_{n_1,\ldots,n_r}$, where the action of this group is given by $(g,\sigma)\mapsto \alpha_1(g)\sigma\alpha_2(g^{-1})$. We will think of this stabilizer as the automorphism group of the diagram of $p(n,\sigma,n_1,\ldots,n_r)$, as it corresponds to permutations of the boxes in a way which does not change the diagram. 
This pairing can also be interpreted in the following way:
we have natural identification and inclusion 
$$(\Ainf)_{n_1,\ldots,n_r}\cong K[S_n]_{S_{n_1,\ldots,n_r}}\cong K[S_n]^{S_{n_1,\ldots, n_r}}\subseteq K[S_n]$$ and $K[S_n]$ has a natural inner product given by $\langle \sigma,\tau\rangle = \delta_{\sigma,\tau}$. The inner product described here is just the restriction of this inner product, rescaled by $\prod_i (n_i)!$ (see also Lemma \ref{lem:compinnerproducts}).
The rest of this section will be devoted to proving the following:
\begin{theorem}
The algebra $\Ainf$ with the basis of basic invariants and the above inner product is a rational PSH-algebra.
\end{theorem}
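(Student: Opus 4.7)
The plan is to verify the five axioms in the definition of a rational PSH-algebra one by one, observing that four of them follow essentially from bookkeeping, and that the single substantive axiom — the adjointness of multiplication and comultiplication with respect to $\langle -,- \rangle$ — reduces to an elementary binomial identity once one understands the automorphism groups of diagrams.

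Axioms (1), (3), (4), (5) are immediate. For (1), two basic invariants $p(n,\sigma,(n_i))$ and $p(n,\tau,(n_i))$ are equal exactly when $\sigma,\tau$ lie in the same orbit of $S_{n_1,\ldots,n_r}$ under $g\cdot\sigma = \alpha^{(q_i)}_{(n_i)}(g)\sigma\alpha^{(p_i)}_{(n_i)}(g^{-1})$, and in that case the pairing is $|\mathrm{Stab}(\sigma)|$, a positive integer; on distinct basis elements it vanishes. For (4), the only diagram of multidegree $(0,\ldots,0)$ is the empty diagram, so $(\Ainf)_0 = K$. For (3), the counit is zero on positive-degree elements and is the identity on $K$, which is the adjoint of the unit $u:K\to\Ainf$ under the canonical pairing on $K$. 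For (5), Proposition \ref{prop:ainfform} shows that the product of two basic invariants is again a basic invariant, so the structure constants of $m$ are $0$ or $1$; since $\Ainf$ is polynomial on the irreducible (primitive) basic invariants (Corollary \ref{cor:ainfwinf} together with Section \ref{sec:bialg}), the coproduct on a monomial $r_1^{\ell_1}\cdots r_s^{\ell_s}$ is $\prod_j(r_j\otimes 1 + 1\otimes r_j)^{\ell_j}$, whose expansion coefficients are products of binomial coefficients, hence nonnegative integers.

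The main axiom (2) states $\langle xy,z\rangle = \langle x\otimes y,\Delta(z)\rangle$. By linearity it suffices to take basis elements. Writing $z = r_1^{\ell_1}\cdots r_s^{\ell_s}$ with distinct irreducible basic invariants $r_j$, the key calculation I will carry out first is that the stabilizer of the full diagram factors as
\[
\langle z,z\rangle \;=\; |\mathrm{Aut}(z)| \;=\; \prod_{j=1}^s \ell_j!\,|\mathrm{Aut}(r_j)|^{\ell_j},
\]
because an automorphism of the disjoint union of diagrams is a permutation of the isomorphic components (contributing the $\ell_j!$) followed by an internal automorphism of each component. Expanding $\Delta(z) = \prod_j\sum_{k_j=0}^{\ell_j}\binom{\ell_j}{k_j}r_j^{k_j}\otimes r_j^{\ell_j-k_j}$, a term $x\otimes y$ occurs with nonzero coefficient only when $x=\prod_j r_j^{k_j}$ and $y=\prod_j r_j^{\ell_j-k_j}$ for some choice of $k_j$; the coefficient is $\prod_j\binom{\ell_j}{k_j}$. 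In that case
\[
\langle x\otimes y,\Delta(z)\rangle \;=\; \prod_j\binom{\ell_j}{k_j}\cdot |\mathrm{Aut}(x)|\,|\mathrm{Aut}(y)| \;=\; \prod_j\binom{\ell_j}{k_j}k_j!(\ell_j-k_j)!\,|\mathrm{Aut}(r_j)|^{\ell_j} \;=\; \prod_j \ell_j!\,|\mathrm{Aut}(r_j)|^{\ell_j},
\]
which equals $\langle z,z\rangle = \langle xy,z\rangle$. When $xy\neq z$ the multisets of irreducible factors disagree, the coefficient in $\Delta(z)$ is zero, and both sides vanish, completing axiom (2).

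The main obstacle is the factorization of $|\mathrm{Aut}(z)|$ as $\prod_j\ell_j!\,|\mathrm{Aut}(r_j)|^{\ell_j}$; this requires identifying the $S_{n_1,\ldots,n_r}$-stabilizer of a disjoint-union diagram with the wreath-product structure coming from (i) permuting the $\ell_j$ isomorphic copies of each irreducible block and (ii) internal symmetries of each copy. Once this lemma is in hand, the adjointness reduces to the binomial identity $\binom{\ell_j}{k_j}k_j!(\ell_j-k_j)! = \ell_j!$, and the remaining axioms are purely formal consequences of the polynomial-algebra structure of $\Ainf$ on primitive generators.
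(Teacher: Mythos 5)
Your proposal follows the same route as the paper: the first, third, fourth, and fifth axioms are dispatched by inspection, and the adjointness axiom is reduced to the factorization $|\mathrm{Aut}(z)| = \prod_j \ell_j!\,|\mathrm{Aut}(r_j)|^{\ell_j}$ via the surjection $\mathrm{Aut}(z)\to\prod_j S_{\ell_j}$ permuting isomorphic connected components (with kernel $\prod_j \mathrm{Aut}(r_j)^{\ell_j}$), after which the binomial identity $\binom{\ell_j}{k_j}k_j!(\ell_j-k_j)!=\ell_j!$ closes the argument. This matches the paper's proof essentially line for line.
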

Since all basic invariants can be written uniquely as monomials in the irreducible basic invariants, and since the irreducible basic invariants are primitive with respect to $\Delta$, it is easy to see that most of the axioms of a rational PSH-algebra hold in $\Ainf$. The only non-trivial part is the fact that $m$ is adjoint to $\Delta$ with respect to $\langle -,-\rangle$. The proof of the theorem will thus be complete with the proof of the following lemma:
\begin{lemma}
For every $a,b,c\in \Ainf$ it holds that $$\langle a\ot b, \Delta(c)\rangle = \langle ab,c\rangle.$$
\end{lemma}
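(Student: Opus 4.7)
The plan is to reduce by bilinearity to the case where $a$, $b$, and $c$ are basic invariants. By Proposition \ref{prop:ainfform}, $\Ainf$ is a polynomial algebra on the irreducible (connected) basic invariants; write $\{p_\alpha\}$ for this family and express $a = \prod_\alpha p_\alpha^{a_\alpha}$, $b = \prod_\alpha p_\alpha^{b_\alpha}$, $c = \prod_\alpha p_\alpha^{c_\alpha}$ uniquely as such monomials. Since multiplication of diagrams is disjoint union, $ab = \prod_\alpha p_\alpha^{a_\alpha + b_\alpha}$.

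The irreducible basic invariants are primitive with respect to $\Delta$ by the main result of Section \ref{sec:bialg}, so
$$\Delta(c) = \prod_\alpha (p_\alpha \otimes 1 + 1 \otimes p_\alpha)^{c_\alpha} = \sum_{(k_\alpha)} \prod_\alpha \binom{c_\alpha}{k_\alpha}\, \Big(\prod_\beta p_\beta^{k_\beta}\Big) \otimes \Big(\prod_\beta p_\beta^{c_\beta - k_\beta}\Big).$$
From the definition of the inner product, distinct basic invariants pair to zero, so both $\langle ab, c\rangle$ and $\langle a\otimes b, \Delta(c)\rangle$ vanish unless $a_\alpha + b_\alpha = c_\alpha$ for every $\alpha$. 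Assuming this equality, only the summand with $k_\alpha = a_\alpha$ contributes to the coproduct pairing, yielding $\langle a\otimes b, \Delta(c)\rangle = \prod_\alpha \binom{c_\alpha}{a_\alpha}\, \langle a,a\rangle\, \langle b,b\rangle$ and $\langle ab,c\rangle = \langle c,c\rangle$.

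The heart of the proof is therefore the norm formula $\langle q, q\rangle = \prod_\alpha m_\alpha!\, \langle p_\alpha, p_\alpha\rangle^{m_\alpha}$ for a basic invariant $q = \prod_\alpha p_\alpha^{m_\alpha}$. To prove this I would interpret the squared norm as the cardinality of the stabilizer of the representing permutation $\sigma \in S_n$ under the action $g \cdot \sigma = \alpha^{(q_i)}_{(n_i)}(g)\, \sigma\, \alpha^{(p_i)}_{(n_i)}(g)^{-1}$ of $S_{n_1,\ldots,n_r}$. The diagram of $q$ decomposes canonically into connected components, with $m_\alpha$ copies of the diagram of each $p_\alpha$. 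A stabilizing element must permute connected components of the same combinatorial type among themselves and then act by a symmetry within each component, so the stabilizer has the structure of a direct product of wreath products $\prod_\alpha (\Aut(p_\alpha) \wr S_{m_\alpha})$, and taking cardinalities yields the stated formula.

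Given the norm formula, the identity $\langle a\otimes b, \Delta(c)\rangle = \langle ab, c\rangle$ reduces in each factor to the elementary identity $\binom{c_\alpha}{a_\alpha}\, a_\alpha!\, b_\alpha! = c_\alpha!$. The main obstacle is establishing the wreath product description of the stabilizer: one must verify that every stabilizing element respects the decomposition of the diagram into connected components, which amounts to checking that the embeddings $\alpha^{(q_i)}_{(n_i)}$ and $\alpha^{(p_i)}_{(n_i)}$ interact compatibly with the block structure imposed by the connected components, so that no stabilizing element can mix boxes belonging to distinct components without altering the underlying diagram.
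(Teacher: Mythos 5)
Your proposal follows essentially the same route as the paper: reduce to monomials in connected basic invariants, use their primitivity under $\Delta$ to expand the coproduct, observe both sides vanish unless $a_\alpha+b_\alpha=c_\alpha$ for all $\alpha$, and then reduce to a norm formula for monomials that is proved by identifying the stabilizer (equivalently, the diagram's automorphism group) as a product of wreath products $\prod_\alpha(\Aut(p_\alpha)\wr S_{m_\alpha})$ — exactly the surjection $\Aut(a)\to\prod_i S_{a_i}$ with kernel $\prod_i\Aut(p_i)^{a_i}$ that the paper uses. Your extra remark about needing to check that stabilizers respect the decomposition into connected components is precisely the step the paper carries out when it argues that an automorphism must send each connected component to an equivalent one.
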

\begin{proof}
Due to linearity it is enough to prove this in case $a,b$ and $c$ are monomials in the irreducible basic invariants. Write $$a = p_1^{a_1}\cdots p_t^{a_t}, b=p_1^{b_1}\cdots p_t^{b_t}\text{ and } c= p_1^{c_1}\cdots p_t^{c_t}$$
where $p_1,\ldots,p_t$ are irreducible basic invariants. 
Since the monomials form an orthonormal basis, we get that if $a_i+b_i\neq c_i$ for some $i$ then both sides of the equation are zero. So we assume that $a_i+b_i=c_i$ for $i=1\ldots,t$. 
It follows that $ab=c$ and the right hand side is equal to $\langle c,c\rangle = |\Aut(c)|$. On the other hand, since all the elements $p_i$ are primitive, we get that $\Delta(p_i^{c_i}) = \sum _{r_i+s_i=c_i}\binom{c_i}{r_i}p_i^{r_i}\ot p_i^{s_i}$
It follows that 
$$\langle a\ot b, \Delta(c)\rangle = \sum_{\substack{r_1+s_1=c_1\\ \vdots \\ r_t+s_t=c_t}}\langle p_1^{a_1}\cdots p_t^{a_t}\ot p_1^{b_1}\cdots p_t^{b_t},\binom{c_1}{r_1}\cdots\binom{c_t}{r_t}p_1^{r_1}\cdots p_t^{r_t}\ot p_1^{s_1}\cdots p_t^{s_t}\rangle=$$
$$\binom{c_1}{a_1}\cdots \binom{c_t}{a_t}\langle a,a\rangle\langle b,b\rangle.$$
We thus need to prove the equation $$\langle c,c\rangle = \prod_i \binom{c_i}{a_i} |\Aut(a)||\Aut(b)|.$$
This will follow once we prove that 
$$|\Aut(a)| = \prod_i (a_i!|\Aut(p_i)|^{a_i}).$$
This follows easily from the following argument: every automorphism of $a$ permutes the connected components of the diagram which corresponds to $a$. The diagram of $a$ has $a_1+a_2+\ldots + a_t$ connected components which correspond to the $p_i$ constituents. Since an automorphism of $a$ will send a connected component to an equivalent connected component any automorphism of $a$ will permute the connected components of type $p_1$, the connected components of type $p_2$ and so on.

We thus have a surjective group homomorphism $\Aut(a)\to \prod_i S_{a_i}$. 
The kernel of this homomorphism consists of those homomorphisms of $a$ which fix all the connected components, and is thus isomorphic to $\prod_i \Aut(p_i)^{a_i}$. 
Calculating the cardinality of $\Aut(a)$ now gives us the desired result. 
\end{proof}
So we do see that we get a rational PSH-algebra structure on $\Ainf$.
PSH-algebras play an important role in the representation theory of finite groups. 
In \cite{Zelevinsky} Zelevinsky proved that every PSH-algebra decomposes uniquely into the tensor product of what he called \emph{universal} PSH-algebras. 
The universal PSH-algebra is the polynomial algebra $\Z[x_1,x_2,\ldots]$ where $\deg(x_n)=n$ and where $\Delta(x_n) = \sum_{a+b=n} x_a\ot x_b$. 
This algebra arises in the study of the representation theory of the symmetric groups in the following way:
Define $A= \bigoplus_{n\geq 0}\R(S_n)$, the direct sum of the Grothendieck groups of all the symmetric groups. For $[V]\in \R(S_n)$ and $[W]\in \R(S_m)$ define $$[V]\cdot [W] = [\Ind_{S_n\times S_m}^{S_{n+m}}V\ot W]$$
$$\Delta([V]) = \sum_{a+b=n} \Res^{S_n}_{S_a\times S_b}[V]\in \bigoplus_{a+b=n}\R(S_a)\ot \R(S_b).$$
These operations together with the basis given by the irreducible representations of $S_n$ define a structure of a PSH-algebra. In \cite{Zelevinsky} Zelevinsky described other PSH-algebras arising from other families of finite groups such as wreath products or general linear groups over finite fields.
In Section \ref{sec:example} we will show that when our algebraic structure contains a single endomorphism the algebra $\Ainf$ is just the extension of scalars of the universal PSH-algebra of Zelevinsky. I do not know, however, if the rational PSH-algebra $\Ainf$ always has a representation theoretic interpretation. See also Question \ref{qu:repth}. 

\section{The Hilbert function of $\Ainf$ and of its finitely generated quotients}\label{sec:hilbert}

We explain now how to calculate the Hilbert function of $\Ainf$ and also of the finitely generated quotients $\Ainf/I_d$. 
The algebra $\Ainf$ is graded by $\N^r$, and we define the Hilbert function $H$ of $\Ainf$ to be 
$$H(n_1,\ldots,n_r) = \dim_K (\Ainf)_{n_1,\ldots,n_r}.$$
Our goal is to prove the following theorem:
\begin{theorem}
The function $H(n_1,\ldots, n_r)$ can be expressed explicitly using the Littlewood-Richardson coefficients and the Kronekcer coefficients in the following way:
If $\sum_i n_ip_i\neq \sum_i n_iq_i$ then it is 0, and if $n=\sum_i n_ip_i = \sum_i n_iq_i$ then it is equal to 
$$\sum_{\la\parti n,\underline{\mu},\underline{\nu},\rho_1,\ldots \rho_r} c^{\la}_{\underline{\mu}}g(\mu_{11},\mu_{12},\ldots,\mu_{1p_1},\rho_1)g(\mu_{21},\mu_{22},\ldots,\mu_{2p_2},\rho_2)\cdots g(\mu_{r1},\ldots,\mu_{rp_r},\rho_r)\cdot $$
$$
c^{\la}_{\underline{\nu}}g(\nu_{11},\nu_{12},\ldots,\nu_{1q_1},\rho_1)g(\nu_{21},\nu_{22},\ldots,\nu_{2q_2},\rho_2)\cdots g(\nu_{r1},\ldots,\nu_{rq_r},\rho_r)$$
where the sum is taken over all $$\underline{\mu} = (\mu_{ij})_{i=1,\ldots r, j=1,\ldots p_i} ,
\underline{\nu} = (\nu_{ij})_{i=1,\ldots r, j=1,\ldots q_i}, 
\underline{\rho} = (\rho_i)_{i=1,\ldots r}$$
$$ \mu_{ij},\nu_{ij},\rho_i\parti n_i.$$
The Hilbert function of $\Ainf/I_d$, is similar, except that the sum is taken only over the partitions $\la\parti n$ such that the number of rows in $\la$ is $\leq d$.
\end{theorem}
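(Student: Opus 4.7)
The plan is to identify $(\Ainf)_{n_1,\ldots,n_r}$ as a space of $S_{n_1,\ldots,n_r}$-invariants in the group algebra $K[S_n]$ and then expand it using Schur--Weyl duality together with the branching of Specht modules along the maps $\alpha^{(p_i)}_{(n_i)}$ and $\alpha^{(q_i)}_{(n_i)}$. First, taking $d$ larger than $n$ so that relations of type $R2$ become trivial, Proposition \ref{prop:invpolys} combined with the coincidence of invariants and coinvariants for reductive actions (Lemma \ref{lem:coinvariants}) identifies $(\Ainf)_{n_1,\ldots,n_r}$ with $K[S_n]^{S_{n_1,\ldots,n_r}}$ for the two-sided action $g\cdot \sigma = \alpha^{(q_i)}_{(n_i)}(g)\,\sigma\,\alpha^{(p_i)}_{(n_i)}(g)^{-1}$, provided $n=\sum_i n_ip_i = \sum_i n_iq_i$, and is zero otherwise (handling the first case of the theorem).

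Next I would use the bimodule decomposition $K[S_n]\cong \bigoplus_{\lambda\vdash n}\S_\lambda\otimes \S_\lambda^*$ as an $S_n\times S_n$-representation. Pulling back along $\phi:=(\alpha^{(q_i)}_{(n_i)},\alpha^{(p_i)}_{(n_i)})$ and using the self-duality of Specht modules, a standard $\Hom$-space argument gives
\[
\dim (\Ainf)_{n_1,\ldots,n_r}=\sum_{\lambda\vdash n}\sum_{\vec\rho}\bigl[(\alpha^{(q_i)}_{(n_i)})^*\S_\lambda:\S_{\vec\rho}\bigr]\cdot \bigl[(\alpha^{(p_i)}_{(n_i)})^*\S_\lambda:\S_{\vec\rho}\bigr],
\]
where $\vec\rho=(\rho_1,\ldots,\rho_r)$ runs over tuples with $\rho_i\vdash n_i$ and $\S_{\vec\rho}=\S_{\rho_1}\boxtimes\cdots\boxtimes\S_{\rho_r}$.

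The heart of the argument is computing each multiplicity $[(\alpha^{(p_i)}_{(n_i)})^*\S_\lambda:\S_{\vec\rho}]$. By Definition \ref{def:alpha} this map factors as $\Pi\circ(\Omega^{\lambda_1}\times\cdots\times\Omega^{\lambda_r})$. Branching along $\Pi$ produces iterated Littlewood--Richardson coefficients $c^\lambda_{(\mu_i)}$ with $\mu_i\vdash n_ip_i$ (Definition \ref{def:LRco}). By Remark \ref{rem:uptoconj}, each $\Omega^{(p_i,\ldots,p_i)}$ is conjugate in $S_{n_ip_i}$ to the composition of the diagonal embedding $S_{n_i}\hookrightarrow S_{n_i}^{p_i}$ with $\Pi$; conjugation is an inner automorphism, so the pullback representations are isomorphic. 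The second $\Pi$-step contributes another LR coefficient $c^{\mu_i}_{(\mu_{i1},\ldots,\mu_{ip_i})}$ with $\mu_{ij}\vdash n_i$, and the diagonal step contributes the iterated Kronecker coefficient $g(\mu_{i1},\ldots,\mu_{ip_i},\rho_i)$ via Equation \ref{eq:Kronecker}. Transitivity of $\Pi$-branching (two successive $\Pi$'s agree with the single $\Pi$ along the refined unordered partition) collapses the $(\mu_i)$-summation to a single iterated LR coefficient, producing
\[
\bigl[(\alpha^{(p_i)}_{(n_i)})^*\S_\lambda:\S_{\vec\rho}\bigr]=\sum_{(\mu_{ij})}c^\lambda_{(\mu_{ij})}\prod_{i=1}^r g(\mu_{i1},\ldots,\mu_{ip_i},\rho_i),
\]
and analogously with $\nu_{ij}\vdash n_i$ and $q_i$ replacing $p_i$ for the other map. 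Substituting into the previous display gives exactly the sum in the statement.

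For the quotient $\Ainf/I_d$, Theorem \ref{thm:SW} identifies the kernel of the map $K[S_n]\to (\End W^{\otimes n})^{\GL(W)}$ for $W=K^d$ as the sum of matrix blocks $\End(\S_\lambda)$ with $r(\lambda)>d$; so passing to the invariant ring in dimension $d$ simply restricts the outer sum to partitions $\lambda$ with at most $d$ rows. The main obstacles I anticipate are proving the transitivity identity for iterated LR coefficients cleanly (it is essentially the associativity of $\Pi^*$ under refinement of its underlying partition) and keeping the double-indexed bookkeeping $(\mu_{ij})$, $(\nu_{ij})$ aligned with Definition \ref{def:alpha}; a secondary subtlety is that Remark \ref{rem:uptoconj} only yields conjugacy rather than equality, but this suffices because pullbacks along conjugate homomorphisms are isomorphic as representations.
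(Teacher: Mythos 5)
Your proposal is correct and follows essentially the same route as the paper: identify $(\Ainf)_{n_1,\ldots,n_r}$ with the $S_{n_1,\ldots,n_r}$-invariants of $K[S_n]$ under the twisted two-sided action, apply the Wedderburn (equivalently, $S_n\times S_n$-bimodule) decomposition, reduce to computing multiplicities of $\S_{\vec\rho}$ in $\alpha_1^*\S_\la$ and $\alpha_2^*\S_\la$, and then factor each $\alpha_i$ (up to conjugation, via Remark \ref{rem:uptoconj}) through diagonal embeddings and $\Pi$-type embeddings to read off iterated Kronecker and Littlewood--Richardson coefficients. The only cosmetic difference is that the paper collapses the nested $\Pi$-embeddings into a single $\Pi_{(n_1^{q_1},\ldots,n_r^{q_r})}$ at the level of group homomorphisms before branching, whereas you branch through both stages and then invoke transitivity of iterated LR coefficients to contract the intermediate sum; these are equivalent formulations of the same step.
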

 \begin{proof}
If $\sum_i n_i(p_i-q_i)\neq 0$ then $(\Ainf)_{n_1,\ldots n_r}=0$. 
If $\sum_i n_ip_i = \sum_i n_iq_i = n$ then
$(\Ainf)_{n_1,\ldots n_r}$ is spanned by the elements $p(n,\sigma,n_1,\ldots,n_r)$, where for $(\sigma_i)\in S_{n_1,\ldots n_r}$ we have that 
$$p(n,\sigma,n_1,\ldots,n_r) = p(n,\alpha^{(q_i)}_{(n_i)}((\sigma_i))\sigma\alpha^{(p_i)}_{(n_i)}((\sigma_i^{-1})),n_1,\ldots, n_r).$$
In other words, we have a surjective map
$$KS_n\to (\Ainf)_{n_1,\ldots n_r}$$ which splits via
$$KS_n\to (KS_n)_{S_{n_1,\ldots,n_r}}\stackrel{\cong}{\to} (\Ainf)_{n_1,\ldots n_r}.$$ Here the action of $S_{n_1,\ldots, n_r}$ on $KS_n$ is given by the formula 
$$(\sigma_i)\cdot \sigma = \alpha^{(q_i)}_{(n_i)}((\sigma_i))\sigma\alpha^{(p_i)}_{(n_i)}((\sigma_i^{-1})).$$
For short, we shall write here, as in the previous section, $\alpha_1 = \alpha^{(q_i)}_{(n_i)}$ and $\alpha_2 = \alpha^{(p_i)}_{(n_i)}$. 
We thus need to calculate the dimension of $(KS_n)_{S_{n_1,\ldots,n_r}}$.
Since $S_{n_1,\ldots,n_r}$ is a finite group it holds that 
$$(KS_n)_{S_{n_1,\ldots,n_r}}\cong (KS_n)^{S_{n_1,\ldots,n_r}}.$$
Now, using Wedderburn decomposition we have 
$$KS_n\cong \bigoplus_{\la\parti n}\Hom_K(\S_{\la},\S_{\la}).$$
Thus,
$$(KS_n)^{S_{n_1,\ldots,n_r}}\cong \bigoplus_{\la\parti n}(\Hom_K(\S_{\la},S_{\la}))^{S_{n_1,\ldots, n_r}}\cong 
\bigoplus_{\la\parti n} \Hom_{S_{n_1,\ldots,n_r}}(\S_{\la},\S_{\la}),
$$ where the action of $S_{n_1,\ldots,n_r}$ on one copy of $\S_{\la}$ is by $\alpha_1$, and on the other copy of $\S_{\la}$ by $\alpha_2$. 
In other words, we get a linear isomorphism 
$$(\Ainf)_{n_1,\ldots n_r}\cong \bigoplus_{\la\parti n} \Hom_{S_{n_1,\ldots,n_r}}(\alpha_2^*(\S_{\la}),\alpha_1^*(\S_{\la})).$$
To calculate the dimension of this space we will first calculate $\alpha_i^*(\S_{\la})$ for a partition $\la\parti n$ and $i=1,2$. 

The group homomorphism $\alpha_1$ is given by the following composition:
$$S_{n_1,\ldots n_r}=S_{n_1}\times\cdots\times S_{n_r}
\stackrel{\Omega^{(q_1^{n_1})}\times\cdots\times\Omega^{(q_r^{n_r})}}{\longrightarrow} $$
$$S_{n_1q_1}\times\cdots\times S_{n_rq_r}\stackrel{\Pi_{(n_1q_1,\ldots,n_rq_r)}}{\to} S_n$$
Using Remark \ref{rem:uptoconj} we can rewrite this map, up to conjugation, as  
$$S_{n_1}\times\cdots S_{n_r}\to (S_{n_1})^{q_1}\times\cdots\times (S_{n_r})^{q_r}\stackrel{\Pi_{(n_1^{q_1})}\times\cdots\times \Pi_{(n_r^{q_r})}}{\longrightarrow} $$
$$S_{n_1q_1}\times\cdots\times S_{n_rq_r}\stackrel{\Pi_{(n_1q_1,\ldots, n_rq_r)}}{\to} S_n$$ where the first map is given by the product of the diagonal embeddings $S_{n_i}\to S_{n_i}^{q_i}$. We can further rewrite this map as
$$\ol{\alpha_1}:S_{n_1}\times\cdots\times S_{n_r}\to (S_{n_1})^{q_1}\times\cdots\times (S_{n_r})^{q_r}\stackrel{\Pi_{(n_1^{q_1},\ldots, n_r^{q_r})}}{\to} S_n.$$
Since conjugation does not change the isomorphism type of representations we have $$\alpha_1^*(\S_{\la})\cong (\ol{\alpha_1})^*(\S_{\la}).$$
Let us write $diag:S_{n_1}\times\cdots\times S_{n_r}\to (S_{n_1})^{q_1}\times\cdots\times (S_{n_r})^{q_r}$ for the product of the diagonal embeddings. We then have
$$\alpha_1^*[\S_{\la}] = diag^*\Pi_{(n_1^{q_1},\ldots,n_r^{q_r})}^*[\S_{\la}] = $$
$$\sum_{\nu_{i,j}\parti n_i}diag^*(c^{\la}_{(\nu_{ij})}[\S_{\nu_{11}}\boxtimes\cdots\boxtimes\S_{\nu_{r,q_r}}]) = $$
$$\sum_{\nu_{i,j}\parti n_i}\sum_{\rho_i\parti n_i}c^{\la}_{(\nu_{ij})}g(\nu_{11},\nu_{12},\ldots,\nu_{1q_1},\rho_1)\cdots g(\nu_{r1},\nu_{r2},\ldots\nu_{rq_r},\rho_r)[\S_{\rho_1}\boxtimes\cdots\boxtimes \S_{\rho_r}]$$
where the sum is taken over all tuples of partitions $(\nu_{i,j})$, $i=1,\ldots ,r$, $j=1,\ldots q_i$ and $\rho_i\parti n_i$ for $i=1,\ldots, r$. We used here Definition \ref{def:LRco} for the iterated Littlewood-Richardson coefficients and Equation \ref{eq:Kronecker} for the iterated Kronecker coefficients. 

Applying a similar calculation for $\alpha_2$ gives us 
$$\alpha_2^*[\S_{\la}] = \sum_{\mu_{i,j}\parti n_i}\sum_{\rho_i\parti n_i}c^{\la}_{(\mu_{ij})}g(\mu_{11},\mu_{12},\ldots,\mu_{1p_1},\rho_1)\cdots g(\mu_{r1},\mu_{r2},\ldots\mu_{rp_r},\rho_r)[\S_{\rho_1}\boxtimes\cdots\boxtimes \S_{\rho_r}]$$
Since $$\dim\Hom_{S_{n_1}\times\cdots\times S_{n_r}}(\S_{\rho_1}\boxtimes\cdots\boxtimes \S_{\rho_r}, \S_{\rho'_1}\boxtimes\cdots\boxtimes \S_{\rho'_r})= \delta_{\rho_1,\rho'_1}\cdots \delta_{\rho_r,\rho'_r}$$ 
we get the formula in the theorem. 
The claim about the Hilbert function of $\Ainf/I_d$ is derived in a similar way, using the fact that if we divide by the ideal $I_d$, the isomorphism 
$$\bigoplus_{\la\parti n}\Hom_{S_{n_1}\times\cdots\times S_{n_r}}(\S_{\la},S_{\la})\to (\Ainf)_{n_1,\ldots,n_r}$$ reduces to an isomorphism 
$$\bigoplus_{\substack{\la\parti n\\ r(\la)\leq d}}\Hom_{S_{n_1}\times\cdots\times S_{n_r}}(\S_{\la},\S_{\la})\to (\Ainf/I_d)_{n_1,\ldots,n_r}.$$ 
\end{proof}

\section{Case study- algebraic structure of a vector space with a single endomorphism}\label{sec:example}
This section contains a detailed case study of the algebra $\Ainf$ in case the algebraic structure we have is that of a vector space with a single linear endomorphism.
In other words: there is a unique tensor structure $x_1=T$ of degree $(1,1)$.
The algebra $\Ainf$ is then graded by $\N^r= \N^1=\N$, where $(\Ainf)_n$ is spanned by elements of the form $p(n,\sigma,n)$.
The embeddings $\alpha_1,\alpha_2:S_n\to S_n$ are then just the identity maps, and $$p(n,\sigma,n) = \Tr(L^{(n)}_{\sigma}T^{\ot n}).$$ 
In this case $p(n,\sigma,n)=p(n,\tau,n)$ if and only if $\sigma$ and $\tau$ are conjugate in $S_n$. 
If the cycle lengths of $\sigma$ are $\la_1,\ldots \la_t$ then it holds that $$p(n,\sigma,n) = \Tr(T^{\la_1})\cdots\Tr(T^{\la_t}).$$
The formula for multiplication of basic invariants gives us 
$$p(n,\sigma,n)\cdot p(m,\tau,m) = p(n+m,(\sigma,\tau),n+m)$$ where $(\sigma,\tau)\in S_{n+m}$ by the natural embedding $\Pi_{(n,m)}:S_n\times S_m\to  S_{n+m}$. 

The irreducible basic invariants are thus the permutations which consist of a single cycle. 
Up to conjugation, we can represent such irreducible basic invariants as $\Tr(T^n) = p((123\cdots n),n)$. These are primitive elements with respect to $\Delta$ and group like elements with respect to $\Delot$. 
The pairing on $(\Ainf)_n\cong (KS_n)_{S_n}$ is then given by 
$$\langle \sigma,\tau \rangle  = \begin{cases} |C_{S_n}(\sigma)| \text{ if }\sigma \text{ and } \tau \text{ are conjugate} \\ 0 \text{ if } \sigma \text{ and } \tau \text{ are not conjugate} \end{cases} $$
So for example the squared norm of $p((123\cdots n),n)$ is $n$, because the centralizer of $(123\cdots n)$ is the cyclic group generated by $(123\cdots n)$, which has order $n$. 

We will now show that this algebra has a different basis, parametrized by partitions of $n$ for every $n\in\N$. 
This will enable is to show that this algebra has a $\Z$-lattice which is the universal PSH-algebra of Zelevinsky, and it will enable us to give a clean description of the ideals $I_d$. 
For this, recall that since $S_n$ is a finite group the natural map
$$\phi:(KS_n)^{S_n}\to (KS_n)_{S_n}$$ $$z\mapsto \ol{z}$$ is an isomorphism with inverse given by $$\phi^{-1}(\ol{\sigma}) = \frac{1}{n!}\sum_{\tau\in S_n} \tau\sigma\tau^{-1}.$$
Also, since the action of $S_n$ on $KS_n$ here is simply conjugation, the invariant subspace $(KS_n)^{S_n}$ is just the center of $S_n$. The center of $S_n$ has a natural basis given by $\{e_{\la}\}_{\la\parti n}$ where $e_{\la}$ is the central idempotent which corresponds to the Specht module $\S_{\la}$

We calculate here the squared norm of $e_{\la}$. For this, define first an inner product on $KS_n$ by $$\langle \sigma,\tau \rangle = \begin{cases} 0 \text{ if }\sigma\neq \tau \\ n! \text{ if } \sigma=\tau.\end{cases}$$ This inner product is the same as $\langle\sigma,\tau\rangle = \chi_{reg}(\sigma\tau^{-1})$ where $\chi_{reg}$ is the character of the regular representation of $S_n$. 
We claim the following:
\begin{lemma}\label{lem:compinnerproducts} The map $\Ainf_n\cong (KS_n)_{S_n}\to (KS_n)^{S_n}\to KS_n$ preserves the inner product. 
\end{lemma}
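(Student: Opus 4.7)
The plan is to verify the claimed equality by directly computing the inner product of $\phi^{-1}(\ol\sigma)$ and $\phi^{-1}(\ol\tau)$ in $KS_n$ and comparing with the formula already established for the inner product on $(\Ainf)_n$. Under the identifications $(\Ainf)_n \cong (KS_n)_{S_n}$, the basic invariant $p(n,\sigma,n)$ corresponds to $\ol\sigma$, and from the general formula in Section \ref{sec:pshalg} specialized to the single-endomorphism case (where $\alpha_1 = \alpha_2 = \Id$), one has $\langle p(n,\sigma,n), p(n,\tau,n)\rangle = |C_{S_n}(\sigma)|$ if $\sigma$ and $\tau$ are conjugate in $S_n$, and $0$ otherwise. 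So the goal is to show that the inner product of $\phi^{-1}(\ol\sigma)$ and $\phi^{-1}(\ol\tau)$ in $KS_n$ produces the same value.

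Using the formula $\phi^{-1}(\ol\sigma) = \frac{1}{n!}\sum_{g\in S_n}g\sigma g^{-1}$ together with bilinearity gives
$$\langle \phi^{-1}(\ol\sigma), \phi^{-1}(\ol\tau)\rangle_{KS_n} = \frac{1}{(n!)^2}\sum_{g,h\in S_n}\langle g\sigma g^{-1}, h\tau h^{-1}\rangle_{KS_n}.$$
Since the inner product on $KS_n$ satisfies $\langle \rho_1,\rho_2\rangle = n!\,\delta_{\rho_1,\rho_2}$, this simplifies to $\frac{1}{n!}$ times the number of pairs $(g,h)\in S_n\times S_n$ with $g\sigma g^{-1} = h\tau h^{-1}$, or equivalently $(h^{-1}g)\sigma(h^{-1}g)^{-1} = \tau$.

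The remaining step is the combinatorial count. If $\sigma$ and $\tau$ lie in distinct conjugacy classes, no such pairs exist and the inner product vanishes, matching the $\Ainf$-side. Otherwise, for each of the $n!$ choices of $h$, the solutions $g$ are exactly $h\cdot (k_0 C_{S_n}(\sigma))$ for any fixed $k_0$ satisfying $k_0\sigma k_0^{-1} = \tau$; this is a coset of the centralizer and therefore contributes $|C_{S_n}(\sigma)|$ values of $g$. The total count is $n!\cdot |C_{S_n}(\sigma)|$, giving
$$\langle \phi^{-1}(\ol\sigma), \phi^{-1}(\ol\tau)\rangle_{KS_n} = \frac{1}{n!}\cdot n!\cdot |C_{S_n}(\sigma)| = |C_{S_n}(\sigma)|,$$
which agrees with the $\Ainf$-side in both cases.

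There is no real obstacle here beyond bookkeeping: the entire point is that the factor $n!$ built into the inner product on $KS_n$ cancels one power of $n!$ from the averaging, while the second power of $n!$ cancels against the $n!$ choices of $h$, leaving exactly the centralizer count. This is precisely the rescaling by $\prod_i(n_i)!$ (here $n!$) that was mentioned just after the definition of the inner product on $\Ainf$ in Section \ref{sec:pshalg}.
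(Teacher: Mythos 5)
Your argument is correct and uses essentially the same direct expansion as the paper: both expand $\phi^{-1}(\ol\sigma)$, $\phi^{-1}(\ol\tau)$ as averages of conjugates, substitute the $n!\,\delta$-form of the inner product on $KS_n$, and reduce the double sum to the centralizer count $|C_{S_n}(\sigma)|$ (or $0$). The only cosmetic difference is that the paper first collapses the double sum to $\frac{1}{n!}\sum_x\langle x\sigma x^{-1},\tau\rangle$ by conjugation-invariance before counting, whereas you count pairs $(g,h)$ directly; the bookkeeping is equivalent.
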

\begin{remark}
As mentioned in the definition of the inner product in Section \ref{sec:pshalg}, this lemma is the reason we have defined the inner product on the coinavriants space the way we did.
\end{remark}
\begin{proof}
The image of $\ol{\sigma}\in K[X]_n$ in $(KS_n)^{S_n}$ is $\frac{1}{n!}\sum_{x\in S_n} x\sigma x^{-1}$. We calculate:
$$\langle \phi^{-1}(\ol{\sigma}),\phi^{-1}(\ol{\tau})\rangle = \frac{1}{n!^2}\sum_{x,y\in S_n}\langle x\sigma x^{-1}, y\tau y^{-1}\rangle = $$
$$\frac{1}{n!}\sum_{x\in S_n} \langle x\sigma x^{-1},\tau\rangle  = \begin{cases} 0 \text{ if } \sigma\text{ and } \tau \text{ are not conjugate} \\
|C_{S_n}(\sigma)| \text{ if } \sigma \text{ and } \tau \text{ are conjugate.} \end{cases}$$
But this is the same as $\langle \ol{\sigma},\ol{\tau}\rangle$ so we are done.
\end{proof}
Now we can calculate the squared norm of $\phi(e_{\la})$. Since every element of $S_n$ is conjugate to its inverse, and since $e_{\la}$ is an idempotent, we get that 
$$\langle \phi(e_{\la}),\phi(e_{\la})\rangle = \chi_{reg}(e_{\la}) = d_{\la}^2$$ where $d_{\la}$ is the dimension of the Specht module $\S_{\la}$.
\begin{definition}
We write $\laa = \frac{1}{d_{\la}}\phi(e_{\la})$.
\end{definition}
We use this terminology to be consistent with Zelevinsky (see Chapter II.6 in \cite{Zelevinsky}).

We thus have a new basis for $\Ainf$, given by $\big\{\laa\big\}_{\la\parti n, n\in \N}$. This basis is orthonormal. Let $A\subseteq \Ainf$ be the $\Z$ lattice generated by the elements $\laa$. We can already identify between $A$ and $\bigoplus_{n\geq 0} \R(S_n)$, since both of them have a basis parametrized by partitions of all natural numbers. In addition, the basis here is also orthonormal, as in the universal PSH-algebra. Our goal now will be to show that the algebra $A$ has the same multiplication and comultiplication as the algebra of Zelevinsky. 

It will be enough to show that the product is the same, since we know that in both algebras the comulutplication is adjoint to the multiplication, and so the multiplication defines it uniquely. 

So let $\laa\in A_n$ and $\muu\in A_m$. The product of $\laa$ and $\muu$ is just given by considering them as elements in $(KS_n)_{S_n}$ and $(KS_m)_{S_m}$ and considering the image of their tensor product under the map $$(KS_n)_{S_n}\ot (KS_m)_{S_m}\to K(S_n\times S_m)_{S_n\times S_m}\to (KS_{n+m})_{S_{n+m}}.$$
By applying $\phi^{-1}$ again we can write 
$$\phi^{-1}(\laa\muu) = \frac{1}{d_{\la}d_{\mu}(n+m)!}\sum_{x\in S_{n+m}}x\Pi_{(n,m)}(e_{\la},e_{\mu})x^{-1} = $$
$$\sum_{\nu\parti n+m} a_{\nu} \frac{1}{d_{\nu}}e_{\nu}$$
where $a_{\nu}\in \Q$ are some numbers. 
The following lemma proves that these scalars are exactly the Littlewood-Richardson coefficients. 
\begin{lemma}
Let $G$ be a finite group and let $H$ be a subgroup. Assume that $e\in H$ is the idempotent which corresponds to an irreducible representation $V$ of $H$.
Write $W_1,\ldots W_r$ for the isomorphism classes of the irreducible representations of $G$, and let $f_i$ be their corresponding central idempotents. Then 
$$\frac{1}{|G|\dim(V)}\sum_{x\in G} xex^{-1} = \sum_i \frac{a_i}{\dim(W_i)}f_i$$ where $a_i = \dim\Hom_G(\Ind_H^G(V),W_i)= \dim\Hom_H(V,\Res^G_H(W_i))$. 
\end{lemma}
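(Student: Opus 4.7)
The plan is to observe that both sides of the claimed identity lie in the center $Z(KG)$ of the group algebra, and then to identify them by computing traces on each irreducible representation $W_i$ of $G$. The expression $\sum_{x\in G}xex^{-1}$ is manifestly invariant under conjugation by $G$, hence central, so the left-hand side is a linear combination $\sum_i c_i f_i$ of the central idempotents $f_i$, and it suffices to show $c_i = a_i/\dim(W_i)$ for each $i$.

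For the right-hand side, since $f_j$ acts as the identity on $W_i$ when $j=i$ and as zero otherwise, we immediately get
$$\Tr_{W_i}\Bigl(\sum_j \tfrac{a_j}{\dim W_j}f_j\Bigr) = a_i.$$
For the left-hand side, I would use that $\Tr_{W_i}(xex^{-1}) = \Tr_{W_i}(e)$ for all $x$, which collapses the sum and yields
$$\Tr_{W_i}\Bigl(\tfrac{1}{|G|\dim V}\sum_{x\in G} xex^{-1}\Bigr) = \tfrac{1}{\dim V}\Tr_{W_i}(e).$$
Then I would substitute the explicit formula $e = \tfrac{\dim V}{|H|}\sum_{h\in H}\chi_V(h^{-1})h$ for the central idempotent of $KH$ corresponding to $V$; this gives
$$\Tr_{W_i}(e) = \tfrac{\dim V}{|H|}\sum_{h\in H}\chi_V(h^{-1})\chi_{W_i}(h) = \dim V\cdot \langle \chi_V,\chi_{\Res^G_H W_i}\rangle_H = \dim V\cdot a_i$$
by the orthogonality relations and the definition of $a_i$ (or equivalently by Frobenius reciprocity). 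Thus $\Tr_{W_i}$ of the left-hand side also equals $a_i$.

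Since both sides are central in $KG$, they act by scalars on each $W_i$; since their traces on $W_i$ agree for every $i$, these scalars agree, and the two elements coincide. There is no real obstacle here beyond keeping track of the two normalizing factors: the $\dim V$ in the denominator of the left-hand side is precisely what is needed to cancel the $\dim V$ appearing in the explicit formula for $e$, and the $\dim W_i$ in the denominator on the right-hand side is what converts $f_i$ (which has trace $\dim W_i$ on $W_i$) into a projection of trace $1$.
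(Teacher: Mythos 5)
Your proof is correct, and its overall strategy coincides with the paper's: both sides are central, so it suffices to match their traces (equivalently, the scalars by which they act) on each irreducible $W_i$; the right-hand side trivially gives $a_i$, and conjugation-invariance of the trace reduces the left-hand side to $\tfrac{1}{\dim V}\Tr_{W_i}(e)$.

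The one place you diverge from the paper is in how you evaluate $\Tr_{W_i}(e)$. You plug in the explicit formula $e = \tfrac{\dim V}{|H|}\sum_{h\in H}\chi_V(h^{-1})h$ and invoke orthogonality of characters on $H$ to recognize $\tfrac{1}{\dim V}\Tr_{W_i}(e) = \langle\chi_V,\chi_{\Res^G_H W_i}\rangle_H = a_i$. The paper instead observes structurally that $e$ acts on $\Res^G_H W_i \cong V^{\oplus t_i}\oplus V'$ as the projection onto the $V$-isotypic part (with $V'$ containing no copies of $V$), so its trace is the rank $t_i\dim V$ of that projection, giving $\tfrac{1}{\dim V}\Tr_{W_i}(e) = t_i = a_i$. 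These are two standard ways of saying the same thing; your character-theoretic calculation is slightly more explicit, the paper's projection argument is slightly more conceptual, and both land cleanly on the answer without any extra overhead. Either is an acceptable proof.
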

\begin{proof}
Write $Z = \frac{1}{|G|\dim(V)}\sum_{x\in G} xex^{-1}= \sum_i \frac{a_i}{\dim(W_i)}f_i$ and 
write $\chi_i$ for the character of $W_i$. 
Since $\chi_i(f_j) = \dim(W_i)\delta_{i,j}$ we get 
$\chi_i(Z) = a_i$, so it will be enough to evaluate this expression.
Using the fact that $\chi_i$ is invariant under conjugation in $G$ we get that 
$$a_i=\chi_i(Z) = \frac{1}{\dim(V)}\chi_i(e).$$ 
Now if $\dim\Hom_H(V,\Res^G_H(W_i))=t_i$ then $\Res^G_H(W_i) \cong  V^{\oplus t_i}\oplus V'$ where $V'$ is a direct sum of irreducible $H$-representations which are not isomorphic to $V$. The action of $e$ on $W_i$ is then given by projection on $V^{\oplus t_i}$ with kernel $V'$. But since this is a projection with image of rank $\dim(V) t_i$, we get that $a_i=\frac{1}{\dim(V)}\dim(V)t_i = t_i$. and we are done. 
\end{proof}

This shows that under the identification $A_n\cong \R(S_n)$, $\{\la\}\mapsto [\S_{\la}]$ the multiplication is given by $$[V]\cdot[W] = [\Ind_{S_n\times S_m}^{S_{n+m}}V\ot W]$$ which is the same as the multiplication in the algebra of Zelevinsky.
Write $X_n\in A_n$ for the class of the trivial representation of $S_n$. Zelevinsky showed that $A= \Z[X_1,X_2,\ldots]$. We Can thus write $\Ainf\cong K[\Tr(T),\Tr(T^2),\ldots]\cong K[X_1,X_2,\ldots]$. The two resulting sets of monomials for $\Ainf$ are quite different here. 
The $\Z$-basis for $A$ given by monomials in $X_i$ is useful in determining the ideals $I_d$. Indeed, from Schur-Weyl duality we know that $I_d$ is spanned exactly by those $\laa$ where the partition $\la$ has more than $d$ rows. The algebra $A$ has a unique non-trivial PSH-algebra automorphism $t$ (see Section 3.11 and 4.1 in \cite{Zelevinsky}) which sends $\laa$ to $\{\la^t\}$ where $\la^t$ is the partition of $n$ given by taking the transpose of the Young diagram of $\la$. This implies that the ideal $t(I_d)$ is generated by all $\laa$ where $\la$ has more than $d$ columns. We claim the following:
\begin{lemma}
the ideal $t(I_d)$ is equal to $(X_{d+1},X_{d+2},\ldots)$. 
As a result $\Ainf/I_d\cong K[X_1,\ldots, X_d]$ is a polynomial ring in $d$ variables. 
\end{lemma}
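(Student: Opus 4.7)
My plan is to prove the two claimed inclusions using the identification of $A$ with the ring of symmetric functions, under which $\laa$ corresponds to the Schur function $s_\la$ and $X_n = \{(n)\}$ corresponds to the complete homogeneous symmetric function $h_n$. Since $t$ is an algebra automorphism of $\Ainf$, the quotient $\Ainf/I_d$ is isomorphic to $\Ainf/t(I_d)$, so it suffices to identify $t(I_d)$ with $(X_{d+1}, X_{d+2}, \ldots)$; and once that is done, $\Ainf/t(I_d) = K[X_1,X_2,\ldots]/(X_{d+1},\ldots) \cong K[X_1,\ldots,X_d]$ follows immediately from the presentation $\Ainf = K[X_1,X_2,\ldots]$.

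For the easy inclusion $(X_{d+1},X_{d+2},\ldots) \subseteq t(I_d)$, I would simply note that for $n \geq d+1$ the element $X_n$ equals $\{(n)\}$, and the single-row partition $(n)$ has $n > d$ columns, so $\{(n)\} \in t(I_d)$ by the description of $t(I_d)$ already established in the excerpt.

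For the reverse inclusion, I would invoke the Jacobi--Trudi identity, which in the PSH-algebra $A$ of Zelevinsky (identified with the ring of symmetric functions) reads
\[
\laa \;=\; \det\bigl(X_{\la_i - i + j}\bigr)_{i,j=1}^{\ell(\la)},
\]
with the convention $X_0 = 1$ and $X_k = 0$ for $k < 0$. If $\la$ has more than $d$ columns, meaning $\la_1 > d$, then every entry in the first row of this determinant is $X_{\la_1+j-1}$ with index $\geq \la_1 \geq d+1$. In any permutation expansion $\det(M) = \sum_\sigma \mathrm{sgn}(\sigma)\prod_i M_{i,\sigma(i)}$ the factor coming from the first row is an $X_k$ with $k \geq d+1$, so every term lies in $(X_{d+1},X_{d+2},\ldots)$ and hence so does $\laa$. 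Since such $\laa$ span $t(I_d)$, this gives $t(I_d) \subseteq (X_{d+1},X_{d+2},\ldots)$, completing the proof.

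The only point needing care is the transfer of Jacobi--Trudi from the classical ring of symmetric functions to $A$; I expect this to be routine since the excerpt has already identified $A$ with Zelevinsky's universal PSH-algebra, whose isomorphism with $\Lambda$ sends $X_n$ to $h_n$ and $\laa$ to $s_\la$. Conceivably one could avoid Jacobi--Trudi by instead arguing inductively on the dominance order, using the fact that $h_n = \sum_{\la \vdash n}\laa$ projects to $\{(n)\}$ plus lower terms, but the determinantal approach seems cleaner and avoids bookkeeping.
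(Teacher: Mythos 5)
Your proof is correct and follows essentially the same route as the paper: the paper also splits into the two inclusions, handles $X_n \in t(I_d)$ by noting the one-row partition $(n)$ has $n>d$ columns, and for the converse invokes the determinantal identity $\laa = \det(X_{l_i+j-i})_{i,j}$ (citing Zelevinsky, Section 4.16 --- the same Jacobi--Trudi formula you use) and observes that when $\la_1 > d$ every first-row entry lies in $(X_{d+1},X_{d+2},\ldots)$, hence so does the determinant.
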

\begin{proof}
Write $I= (X_{d+1},X_{d+2},\ldots)$. We will show that $t(I_d)= I$. 
On the one hand, if $n>d$ then $X_n$ corresponds to the trivial representation of $S_n$. The Young diagram of the corresponding partition $(n)$ has $n$ columns. Therefore, $X_n\in t(I_d)$. Conversely, assume that $\{\la\}\in t(I_d)$. Then $\la$ has more than $d$ columns. 
If $\la = (l_1,\ldots,l_t)$ then Zelevinsky showed in Section 4.16. of \cite{Zelevinsky} that 
$$\laa = det(X_{l_i+j-i})_{i,j}.$$
The upper row of the matrix $(X_{l_i+j-i})_{i,j}$ contains the elements $X_{l_1},X_{l_1+1},\ldots X_{l_1+t-1}$. The assumption that $\la$ has more than $d$ rows means that $l_1>d$. This implies that all the elements in the first row of the matrix are in $I$, and the determinant of the matrix is thus also in $I$.
This shows that $t(I_d)\subseteq I$ and we are done.
\end{proof}
This example shows that by picking the ``right'' basis for $\Ainf$ for which we get a PSH-algebra structure, not only a rational PSH-algebra structure, we can get a very clean description for the ideal $I_d$. This leads to the following question:
\begin{question}\label{qu:repth}
Let $\Ainf$ be the universal invariant ring arising from an algebraic structure of type $((p_1,q_1),\ldots, (p_r,q_r))$. Is there a natural $\Z$-lattice $A\subseteq \Ainf$ such that $A$ is a PSH-algebra with respect to the operations induced by those in $\Ainf$? Does this basis gives a neat description of the ideals $I_d$? Can we describe $\Ainf$ in representation-theoretic terms using that basis?
\end{question}

\section*{Acknowledgments}
I would like to thank L{\'o}r{\'a}nt Szegedy for his help with the tikzit package.


\begin{thebibliography}{abc}
\bibitem[ADS06]{ADS} H. Aslaksen, V. Drensky and L. Sadikova, Defining relations of invariants of two $3\times 3$ matrices, Journal of Algebra, Volume 298, Issue 1, Pages 41-57 (2006).
\bibitem[BD08]{BD} F. Benanti, V. Drensky, Defining relations of minimal degree of the trace algebra of $3\times 3$ matrices, Journal of Algebra, Volume 320, Issue 2, Pages 756-782 (2008).
\bibitem[BVO15]{BVO} C. Bowmann, M. De Visscher and R. Orellana, The partition algebra and the Kronecker coefficients, Transactions of the American Mathematical Society, Volume 367, Number 5, Pages 3647-3667 (2015).
\bibitem[DKS03]{DKS} S. Datt, V. Kodiyalam and V.S. Sunder, Complete invariants for complex semisimple Hopf algebras, Math. Res. Lett. 10 (5-6) 571-586 (2003).
\bibitem[Dr03]{Drensky}V. Drensky, Defining Relations for the Algebra of Invariants of $2\times 2$ Matrices, Algebras and Representation Theory 6: 193-214 (2003).
\bibitem[Ho12]{Hoge} T. Hoge, A presentation of the trace algebra of three $3\times 3$ matrices, Journal of Algebra, Volume 358, Pages 257-268 (2012).
\bibitem[Me16]{meir3} E. Meir, Descent, fields of invariants, and generic forms via symmetric monoidal categories, Journal of Pure and Applied Algebra 220 2077-2011 (2016).
\bibitem[Me17]{meir1} E. Meir, Semisimple Hopf algebras via geometric invariant theory, Advances in Mathematics 311, 61-90 (2017).
\bibitem[Me19]{meir2} E. Meir, Hopf cocycle deformations and invariant theory, Math. Z. DOI 10.1007/s00209-019-02326-5 (2019).
\bibitem[Me20-1]{meir4} E. Meir, Geometric perspective on Nichols algebras, arXiv:1907.11490
\bibitem[Me20-2]{meir5} E. Meir, with an appendix by Dejan Govc, Invariant rings and representations of the symmetric groups, arXiv:1907.12936 
\bibitem[Nak02]{Nakamoto} K. Nakamoto, The structure of the invariant ring of two matrices of degree 3, J. Pure Appl. Algebra 166 (2002), No. 1-2, 125-148.
\bibitem[Ne78]{Newstead} P. E. Newstead, Introduction to moduli problems and orbit spaces. Tata Institute of Fundamental Research Lectures on Mathematics and Physics, 51. (1978).
\bibitem[Pr76]{Procesi} C. Procesi, The invariant theory of n $\times$ n matrices, Adv. Math. 19 306-381 (1976).
\bibitem[Sa01]{Sagan} B. E. Sagan, The Symmetric Group: Representations, Combinatorial Algorithms, and Symmetric Functions. Graduate Texts in Mathematics, Vol. 203, Springer (2001).
\bibitem[Sw69]{Sweedler} M. E. Sweedler, Hopf algebras, Mathematics Lecture Note Series, W. A. Benjamin, Inc., New York (1969).
\bibitem[Te86]{Teranishi} Y. Teranishi, The ring of invariants of matrices, Nagoya Math. J. 104 (1986), 149-161.
\bibitem[Ze81]{Zelevinsky} A. Zelevinsky, Representations of Finite Classical Groups: A Hopf Algebra Approach, Springer Lecture Notes in Mathematics (1981). 
 \end{thebibliography}
\end{document}